\documentclass[11pt, reqno]{amsart}
\usepackage[T1]{fontenc}
\usepackage[latin1]{inputenc}
\usepackage[english]{babel}
\usepackage{amsthm}
\usepackage{amssymb,amsmath,bbm,dsfont}
\usepackage{enumitem}
\usepackage{enumitem}
\usepackage[all]{xy}
\usepackage{mathrsfs}
\usepackage[left=0.0cm,right=0.0cm,top=1.7cm,bottom=1.7cm]{geometry}
\usepackage{hyperref}
\usepackage{verbatim}

\hoffset -0.51in
\marginparwidth -4.1in
\setlength{\oddsidemargin}{15.5pt}
\setlength{\evensidemargin}{15.5pt}
\setlength{\textwidth}{7.0in}

\theoremstyle{plain}
\newtheorem{theorem}{Theorem}[section]

\newtheorem{lemma}[theorem]{Lemma}
\newtheorem{proposition}[theorem]{Proposition}
\newtheorem{prop}[theorem]{Proposition}
\newtheorem{corollary}[theorem]{Corollary}
\newtheorem{cor}[theorem]{Corollary}

\theoremstyle{definition}
\newtheorem{definition}[theorem]{Definition}

\theoremstyle{remark}
\newtheorem{remark}[theorem]{Remark}

\theoremstyle{plain}

\def\C{{\bf C}}

\def\H{{H}}

\def\SL{{\mathbf{SL}}}


\def\A{{\bf A}}



\def\epsilon{\varepsilon}

\def\G{\mathbf{G}}
\def\H{\mathbf{H}}

\def\GSp{{\mathrm{GSp}}}
\def\Sp{{\mathrm{Sp}}}
\def\GU{{\mathrm{GU}}}

\def\GL{\mathrm{GL}}

%
%

\usepackage[usenames,dvipsnames]{color}

\title{A two variable Rankin--Selberg integral for $\GU(2,2)$ and the degree 5 $L$-function of $\GSp_4$}

\author{Antonio Cauchi}
\address{Antonio Cauchi\newline Dept. of Mathematics, School of Science\\ Tokyo Institute of Technology\\2-12-1 Ookayama, Meguro-ku\\ Tokyo, 152-8551 Japan}
\email{cauchi.a.aa@m.titech.ac.jp}

\author{Armando Gutierrez Terradillos}
\address{Armando Gutierrez Terradillos\newline 
Morningside Center of Mathematics, Chinese Academy of Sciences \\  
No. 55, Zhongguancun East Road, Haidian District, Beijing 100190, China}
\email{armando@amss.ac.cn}

\subjclass[2020]{11F66, 11F70, 22E55}
\keywords{Langlands $L$-functions, Multivariable Rankin--Selberg integrals, periods of automorphic forms.}

\makeatletter
\def\@tocline#1#2#3#4#5#6#7{\relax
  \ifnum #1>\c@tocdepth 
  \else
    \par \addpenalty\@secpenalty\addvspace{#2}%
    \begingroup \hyphenpenalty\@M
    \@ifempty{#4}{%
      \@tempdima\csname r@tocindent\number#1\endcsname\relax
    }{%
      \@tempdima#4\relax
    }%
    \parindent\z@ \leftskip#3\relax \advance\leftskip\@tempdima\relax
    \rightskip\@pnumwidth plus4em \parfillskip-\@pnumwidth
    #5\leavevmode\hskip-\@tempdima
      \ifcase #1
       \or\or \hskip 1em \or \hskip 2em \else \hskip 3em \fi%
      #6\nobreak\relax
    \dotfill\hbox to\@pnumwidth{\@tocpagenum{#7}}\par
    \nobreak
    \endgroup
  \fi}
\makeatother
\usepackage{hyperref}

\begin{document}

\begin{abstract}
We give a two-variable Rankin--Selberg integral for generic cusp forms on $\mathrm{PGL}_4$ and $\mathrm{PGU}_{2,2}$ which represents a product of exterior square $L$-functions. As a residue of our integral, we obtain an integral representation on $\mathrm{PGU}_{2,2}$ of the degree 5 $L$-function of $\GSp_4$ twisted by the quadratic character of $E/F$ of cuspidal automorphic representations which contribute to the theta correspondence for the pair $(\mathrm{P}\GSp_4,\mathrm{P}\GU_{2,2})$.
 \end{abstract}

\maketitle
\selectlanguage{english}

\setcounter{tocdepth}{1}
\tableofcontents

\section{Introduction}

Automorphic $L$-functions are one of the cornerstones of the Langlands program. They provide a bridge between number theory and representation theory, dispensing a profound link between the arithmetic of algebraic objects such as elliptic curves and harmonic analysis on reductive groups. Automorphic $L$-functions are defined by the data of a cuspidal automorphic representation $\pi$ on a reductive group $G$ and of a finite dimensional representation $r$ of the Langlands group $^LG$. Their zeros or poles are often linked to the non-vanishing of certain periods of $\pi$ and to the participation of $\pi$ as a Langlands functorial transfer or theta lift of an automorphic representation of another reductive group. 
One of the main approaches to study the analytic properties of automorphic $L$-functions are via Rankin--Selberg integrals. Among those, the exotic family of Rankin--Selberg integrals in more than one complex variable represent products of $L$-functions and have applications to the study of their simultaneous poles. For instance, in \cite{BFG} Bump, Friedberg, and Ginzburg gave integrals in two complex variables on $\GSp_4$, $\GSp_6$, and $\GSp_8$, each representing the product of the spin and the standard $L$-functions. Similarly, in \cite{PollackShahMVGU} Pollack and Shah gave a two-variable integral on $\GU(2,2)$, which represents the product of the standard and the exterior square $L$-functions. In this article, we consider a new two-variable Rankin--Selberg integral, which for a globally generic cusp form on $\GU(2,2)$ or $\GL_4$, represents a product of exterior square $L$-functions.  As a residue of this integral, we obtain an integral representation of a twist of the degree $5$ $L$-function of $\GSp_4$, which gives a new instance of the curious phenomenon where a Rankin--Selberg integral of a cusp form on one group is used to represent an $L$-function of a different cusp form on another group. 
\\

We now describe our main results in more detail. Let $F$ be a number field and denote by $\A$ its adeles; we also let $E/F$ be a quadratic field extension of $F$ which defines the unitary group $\GU_{2,2/F}$.  We let $E^*_{B}(g,s,z)$ be the normalized two variable Borel Eisenstein series for $\GSp_{4/F}$. If $\Pi$ is a generic cuspidal automorphic representation of either $\GU_{2,2/F}$ or $\GL_{4/F}$ with trivial central character, and $\varphi$ is a cusp form in the space of $\Pi$, we define
\[I^*(\varphi, s ,z) := \int_{\GSp_{4}(F) Z_{\GSp_{4}}(\A)\setminus \GSp_{4}(\A)} \!\!\!\!\!E^*_{B}(g,s,z)\varphi(g)dg.\]
Let $\Sigma$ be a finite set of places of $F$ containing the ramified places for $\Pi$, for $E/F$ (when working with $\GU_{2,2}$), and the archimedean places of $F$. In Theorem \ref{mainunramified} and \eqref{mainunramifiednormalized}, we prove the following.

\begin{theorem}\label{mainthm1}
   We have \[I^*(\varphi, s ,z) = L^\Sigma(s, \Pi, \wedge^2)L^\Sigma(z, \Pi, \wedge^2) I_\Sigma(\varphi,s,z), \]
   where $I_\Sigma(\varphi,s,z)$, which is the product of the local integrals at places in $\Sigma$, can be made non-zero at $(s,z)$ for a certain choice of $\varphi$.
\end{theorem}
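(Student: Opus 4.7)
The plan is to follow the standard Rankin--Selberg unfolding method, with the bulk of the technical work in the orbit analysis and the unramified local computation. First I would substitute the sum definition
$$E^*_B(g,s,z) = \sum_{\gamma \in B(F)\backslash \GSp_4(F)} f(\gamma g, s, z)$$
of the normalized Borel Eisenstein series for a flat section $f$, interchange sum and integral, and collapse the $\GSp_4(F)$ quotient. This produces the global integral
$$\int_{B(F) Z_{\GSp_4}(\A)\backslash \GSp_4(\A)} f(g,s,z)\,\varphi(g)\, dg.$$

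Since $\varphi$ is cuspidal on the ambient group $G\in\{\GL_4,\GU_{2,2}\}$ but the unipotent radical $N$ of the $\GSp_4$-Borel is not the unipotent radical of any $G$-parabolic, the constant term of $\varphi$ along $N$ does not vanish directly. My next step is to Fourier-expand $\varphi$ along a chain of unipotent subgroups of $G$ containing $N$, chosen so that their $G$-parabolic completions have vanishing constant term by cuspidality. After analyzing the $T(F)$-orbit structure of the characters arising in each expansion, only the orbit of a non-degenerate (generic) character should contribute, and the remaining integral takes the Whittaker form
$$\int W_\varphi(g)\, f(g,s,z)\, dg,$$
where $W_\varphi$ is the global Whittaker function of $\varphi$ attached to a suitable generic character.

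By uniqueness of the Whittaker model, applicable because $\Pi$ is globally generic, and by choosing $f$ factorizable, the integral becomes an Euler product $\prod_v I_v(W_{\varphi,v}, f_v, s, z)$. The crucial step, and the one I expect to be the main obstacle, is the unramified local computation at $v\notin\Sigma$: using the Casselman--Shalika formula for $W_{\varphi,v}^\circ$ evaluated on torus elements, the integral reduces to a sum over the cone of dominant cocharacters of $\GSp_4$, and one must recognize the resulting two-variable generating function as
$$I_v(W^\circ_{\varphi,v}, f_v^\circ, s, z) = L_v(s, \Pi_v, \wedge^2)\, L_v(z, \Pi_v, \wedge^2).$$
The combinatorial identity here relates a Weyl-type sum of Schur polynomials in the Satake parameters of $\Pi_v$ to the product of two exterior-square character generating series in independent variables; the decoupling of $s$ and $z$ requires careful bookkeeping with the root system of $\GSp_4$, and I would approach it either via the Jacobi--Trudi identity or by identifying the sum as a Weyl character paired against two independent torus shifts.

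Finally, the non-vanishing of $I_\Sigma(\varphi,s,z)$ at a prescribed $(s,z)$ follows by a standard density argument: at each $v \in \Sigma$, one picks $f_v$ supported in a small neighborhood of a point where $W_{\varphi,v}$ is non-zero (possible by genericity of $\Pi_v$ and continuity of local integrals in the section), so that each local integral becomes a non-zero constant times the integral of a non-vanishing compactly supported function, and the global product $I_\Sigma(\varphi,s,z)$ is non-zero at the prescribed point.
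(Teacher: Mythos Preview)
Your outline is correct in broad strokes and follows the same overall architecture as the paper (unfold, Fourier expand to a Whittaker integral, factorize, compute unramified local factors, handle ramified places), but the execution differs in two places worth flagging.

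First, the unfolding in the paper is not the direct ``collapse to $B(F)\backslash\GSp_4(F)$, then analyze $T(F)$-orbits'' that you describe. Instead it proceeds in two stages: one first unfolds along the Klingen parabolic $Q_\H$ of $\GSp_4$, Fourier-expands over $[N_{Q_\H}\backslash N_{Q_\G}]$ using the $M_{Q_\H}(F)$-orbit structure, and only then handles the remaining inner sum over $B_\H(F)\backslash Q_\H(F)$ via the Bruhat decomposition of $\GL_2$. It is this Bruhat step that produces the specific Weyl element $w$ appearing in the unfolded expression $\int f(wg,s,z)\,W_\varphi(g)\,dg$ over $N_{Q_\H}(\A)Z_\H(\A)\backslash\H(\A)$; your sketch does not account for where this $w$ comes from. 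A second Fourier expansion over $[N_{Q_\G}\backslash U_\G]$ then yields the Whittaker function.

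Second, and more substantively, for $\GU_{2,2}$ the unramified computation bifurcates according to whether $v$ is split or inert in $E$, and the inert case is not a matter of Schur polynomials in the Satake parameters of $\Pi_v$. At an inert place the relative root system of $\G$ is that of $\GSp_4$, so Casselman--Shalika expresses $W_v$ via characters of $\mathrm{Spin}_5(\C)\simeq\Sp_4(\C)$, and the combinatorics naturally produces the \emph{Spin} $L$-function of the associated $\GSp_4$-representation $\pi_v$; one then needs the identity $L(s,\Pi_v,\wedge^2)=\zeta_v(2s)L(s,\pi_v,\mathrm{Spin})$ to finish. The paper's combinatorial engine is explicit Littlewood--Richardson rules (for the split case) and the $\SL_4\downarrow\Sp_4$ branching law (for the inert case), rather than Jacobi--Trudi. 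Your non-vanishing argument at places in $\Sigma$ is in the same spirit as the paper's, which invokes Dixmier--Malliavin.
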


Let now $\Pi$ be a globally generic cuspidal automorphic representation of $\mathrm{PGU}_{2,2}(\A)$ and let $E^*_P(g,s)$ be the normalized degenerate Siegel Eisenstein series for ${\rm GSp}_4$. As $E^*_P(g,s)$ can be obtained as the residue at $z=1$ of $E^*_B(g,s,z)$, the residue at $z=1$ of $I^*(\varphi,s,z)$ is the integral
\[J^*(\varphi,s) = \int_{{\rm GSp}_4(F)Z_{{\rm GSp}_4}(\A) \backslash {\rm GSp}_4(\A)} E^*_P(g,s)\varphi(g)dg.\] 
We note that $J^*(\varphi,s) $ is the quasi-split counterpart of the integral for the group ${\rm SO}_{3,3}$ (hence $\mathrm{PGL}_4$) studied in \cite{BFGsplitorthogonal} and can be regarded as the degenerate companion of the integral for ${\rm SO}_{4,2}$ studied in \cite{Sugano} and \cite{PollackUGJ}.
 $J^*(\varphi,s)$ unfolds to the Shalika period of $\varphi$, which is non-zero if and only if the partial twisted exterior square $L$-function $L(s, \Pi, \wedge^2)$ has a simple pole at $s=1$ (\textit{cf}. \cite{FurusawaMorimoto}). When any of these hold, $\Pi$ comes from a globally generic cuspidal automorphic representation $\sigma$ of ${\rm GSp}_{4}(\A)$ with trivial central character, by means of a theta correspondence for $({\rm PGSp}_4,{\rm PGU}_{2,2})$ (\textit{cf}. \cite[Theorem B]{morimoto}), and one has that
\[ L^\Sigma(s,\Pi, \wedge^2) = L^\Sigma(s,\sigma, {\rm std} \otimes \chi_{E/F}) \zeta_F^\Sigma(s),\]
where $L^\Sigma(s,\sigma, {\rm std} \otimes \chi_{E/F})$ denotes the partial degree 5 $L$-function for ${\rm GSp}_{4}$ twisted by the quadratic Hecke character associated to the quadratic extension $E/F$ which defines ${\rm GU}_{2,2}$.  Theorem \ref{mainthm1} and an analysis of the integral $J^*(\varphi,s)$ at the points where $E^*_P(g,s)$ might obtain a pole let us show the following (\textit{cf}. Theorems \ref{Thisismain2} and \ref{polesofintegral}). 

\begin{theorem}\label{mainthm2} Suppose that $\Pi$ appears in the theta lift of $\sigma$ in the sense of \cite[Theorem B]{morimoto}, where $\sigma$ is a globally generic irreducible cuspidal automorphic representation of ${\rm GSp}_4(\A)$ with trivial central character. For a given cusp form $\varphi$ in $\Pi$, there exist a cusp form $\varphi'$ in $\Pi$ and section $f$ defining the Borel Eisenstein series such that 
 \[J^*(\varphi,   s) = L^\Sigma(s,\sigma, {\rm std} \otimes \chi_{E/F}) \cdot \prod_{v \in \Sigma_\infty}
     I(\varphi_v',f_v, s  , 1 ). \]
where $\Sigma$ is a finite set of places containing the set $\Sigma_\infty$ of archimedean places and the ramified places for $\Pi$,  $\sigma$,  and $E/F$. Furthermore, the integral $J^*(\varphi,   s)$ and the partial $L$-function $L^\Sigma(s,\sigma, {\rm std} \otimes \chi_{E/F})$ extend to holomorphic functions for all $s$.
\end{theorem}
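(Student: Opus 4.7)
The proof takes the residue at $z=1$ of the two-variable identity from Theorem \ref{mainthm1}. Since the normalized degenerate Siegel Eisenstein series $E_P^*(g,s)$ is constructed precisely as $\mathrm{Res}_{z=1} E_B^*(g,s,z)$, and $\varphi$ is cuspidal and hence rapidly decreasing, residue and integration commute, giving
\[J^*(\varphi,s) = \mathrm{Res}_{z=1} I^*(\varphi,s,z).\]
Invoking Theorem \ref{mainthm1} together with the Furusawa--Morimoto criterion \cite{FurusawaMorimoto}, the theta-lift hypothesis forces $L^\Sigma(z,\Pi,\wedge^2)$ to have a simple pole at $z=1$ (equivalent to non-vanishing of the Shalika period on $\Pi$), while the local integrals $I_v(\varphi_v,f_v,s,z)$ remain holomorphic at $z=1$ for generic data. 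The residue thus collapses to
\[J^*(\varphi,s) = L^\Sigma(s,\Pi,\wedge^2)\cdot \kappa \cdot \prod_{v\in\Sigma} I_v(\varphi_v,f_v,s,1),\]
where $\kappa = \mathrm{Res}_{z=1} L^\Sigma(z,\Pi,\wedge^2)\ne 0$.

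Next I would substitute the theta-lift factorization
\[L^\Sigma(s,\Pi,\wedge^2) = L^\Sigma(s,\sigma,\mathrm{std}\otimes\chi_{E/F})\cdot \zeta_F^\Sigma(s)\]
from \cite{morimoto} and perform a local analysis at the finite bad places $v\in\Sigma\setminus\Sigma_\infty$. The key technical step is to show that, given $\varphi$, one can produce $\varphi'\in\Pi$ and a Borel section $f$ so that the finite-place excess $\zeta_F^\Sigma(s)\cdot\kappa\cdot \prod_{v\in\Sigma\setminus\Sigma_\infty} I_v(\varphi'_v,f_v,s,1)$ is identically $1$, leaving exactly $L^\Sigma(s,\sigma,\mathrm{std}\otimes\chi_{E/F})\cdot\prod_{v\in\Sigma_\infty} I(\varphi'_v,f_v,s,1)$. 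This is a Bernstein-style flexibility argument: as the ramified data $\varphi_v, f_v$ vary in the generic induced representations at $v$, the span of the resulting local integrals $I_v(\varphi_v,f_v,s,1)$ is rich enough to realize any prescribed holomorphic target, and in particular to invert the product $\zeta_F^\Sigma(s)\cdot\kappa$ on any fixed bounded region of $s$.

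For the holomorphy assertions I would analyze directly the potential poles of $E_P^*(g,s)$, as carried out in Theorem \ref{polesofintegral}: the candidate singularities form a finite set, and cuspidality of $\varphi$ together with vanishing of the relevant constant terms forces the residues of $J^*(\varphi,s)$ to cancel, so $J^*(\varphi,s)$ is entire in $s$. The displayed identity then forces $L^\Sigma(s,\sigma,\mathrm{std}\otimes\chi_{E/F})$ to be entire as well, provided that $\prod_{v\in\Sigma_\infty} I(\varphi'_v,f_v,s,1)$ can be made non-vanishing at any prescribed $s_0\in\C$ by varying the archimedean data; this is a standard archimedean flexibility statement. The principal obstacle is the finite-place local analysis: exhibiting $\varphi'$ and $f$ that achieve the required cancellation at ramified $v$ goes beyond the unramified computation and demands simultaneous control of the ramified Whittaker/generic model and the chosen Eisenstein section, which is typically the most delicate part of arguments of this type.
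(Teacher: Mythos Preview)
Your overall architecture---take the residue at $z=1$, invoke the theta-lift factorization of $L^\Sigma(s,\Pi,\wedge^2)$, then analyze the poles of $E_P^*$ directly---matches the paper. But the step you flag as the ``principal obstacle'' is based on a misconception, and the paper's treatment is considerably simpler.

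\medskip

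\textbf{On the finite bad places.} You propose a Bernstein-style argument to force $\zeta_F^\Sigma(s)\cdot\kappa\cdot\prod_{v\in\Sigma\setminus\Sigma_\infty} I_v(\varphi'_v,f_v,s,1)\equiv 1$. This is unnecessary for two reasons. First, the factor $\zeta_F^\Sigma(s)$ cancels \emph{automatically}: if you work with the unnormalized identity of Theorem~\ref{mainunramified} rather than the normalized one, the denominator $\zeta_F^\Sigma(s-z+1)$ specializes at $z=1$ to $\zeta_F^\Sigma(s)$, and this is exactly what the theta-lift relation $L^\Sigma(s,\Pi,\wedge^2)=L^\Sigma(s,\sigma,\mathrm{std}\otimes\chi_{E/F})\,\zeta_F^\Sigma(s)$ cancels (this is equation~\eqref{partialform} followed by the second half of Proposition~\ref{TowardsMain2}). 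Second, at each finite $v\in\Sigma$ the paper does not attempt to realize an arbitrary holomorphic target: Lemma~\ref{LemmaonRamified}(2), proved by a direct Dixmier--Malliavin argument, shows one can choose $\varphi_v,f_v$ so that $I(\varphi_v,f_v,s,z)$ is a non-zero \emph{constant} in both variables. The remaining non-zero scalars (this constant, $\kappa$, $\zeta_F^\Sigma(2)$, etc.) are then absorbed by rescaling $\varphi'_v$ at a single archimedean place. So there is no delicate ramified local analysis; the ``obstacle'' dissolves once you track the zeta factors through the unnormalized formula.

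\medskip

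\textbf{On holomorphy.} Your sketch (``cuspidality together with vanishing of the relevant constant terms'') is too coarse. The normalized Siegel series $E_{P_\H}^*(g,s)$ has at most simple poles at $s=1,2$. At $s=1$ the paper uses Ikeda's first-term identity (Proposition~\ref{Ikeda}) to convert the residue into a degenerate Klingen Eisenstein integral $J'(\varphi,f'_{1/2},1/2)$, and this vanishes by Proposition~\ref{vanishingofthemaledetto} because its unfolding produces a degenerate Whittaker coefficient containing the period along $N_{P_\G}$. At $s=2$ the residue of $E_{P_\H}^*$ is constant, so one is left with the period $\varphi^\H(1)=\int_{[\mathbf{P}\H]}\varphi$; this vanishes not by cuspidality but by \emph{genericity} of $\Pi$, via Lemma~\ref{lemmaontheperiod} (passing to a split place and invoking the Heumos--Rallis disjointness of generic and $\Sp_4$-distinguished representations of $\GL_4$). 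You should make both of these inputs explicit.
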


\noindent One novelty of our integral representation $J^*(\varphi,s)$ of the degree 5 $L$-function of $\mathrm{GSp}_{4}$ is that it might be used to address motivic questions. Indeed, since $\mathrm{GSp}_{4}$ and $\mathrm{GU}_{2,2}$ have Shimura varieties, our integral seems to have applications to the study of the conjectural ``standard'' degree 5 motive associated to cohomological cuspidal automorphic representations of $\mathrm{GSp}_{4}$, such as the construction of an Euler system for the corresponding Galois representation, similar in spirit to the one given in work in progress by Skinner and Sangiovanni Vincentelli for the symmetric square of a modular form. 

\subsection*{Acknowledgements}
A.C. was financially supported by the JSPS Postdoctoral Fellowship for Research in Japan. A.G. was supported by the Morningside Center of Mathematics (CAS). We are greatly indebted to the anonymous referees, whose insights and comments have notably strengthened the main results of the manuscript and improved their exposition.  

\section{Groups and \texorpdfstring{$L$}{L}-functions}

Let $F$ be a number field, with ring of integers $\mathcal{O}$, and denote by $\A$ its adeles. We let $\zeta_F$ denote the Dedekind zeta function of $F$. When $v$ is a finite place of $F$, we let $F_v$ be the $v$-adic completion of $F$ with ring of integers $\mathcal{O}_v$; we let $q_v$ be the size of the quotient of $\mathcal{O}_v$ modulo its maximal ideal $\mathfrak{p}_v$ and fix a uniformizer  $\varpi_v$ of $\mathcal{O}_v$. Accordingly, we normalize the norm $|\cdot |$ of $F_v$ so that $|\varpi_v| = q_v^{-1}$. Here and throughout the article, if $G$ is a group with a parabolic subgroup $P$, we will denote by ${\rm Ind}_{P}^{G}(\bullet)$ the normalized induction and by by $I_P^G(\bullet)$ the not normalized one.

\subsection{Groups}
\subsubsection{Definitions}
Denote by $J_2$ the $2 \times 2$ anti-diagonal matrix with all entries 1 and $J=\left( \begin{smallmatrix}  & J_2 \\ -J_2 & \end{smallmatrix} \right)$.  Fix a quadratic extension $E/F$ and define $\G$ the group scheme over $F$ given by  
\[ \G(R):=\GU_{2,2}(R) = \{ (g,m_g) \in \GL_{4,F}(R \otimes_{F}  E ) \times \GL_{1,F}(R) :\bar{g}^t J g = m_g J\}, \]
where $\bar{\bullet}$ denotes the non-trivial automorphism of order 2 of $E/F$ and $R$ denotes any $F$-algebra.  Denote by $\nu:\G \to \GL_{1,F}, \; g \mapsto m_g$ the similitude character. Inside $\G$, we have the subgroup \[\H(R) := \GSp_{4,F}(R)=\{ (g,m_g) \in \GL_{4,F}(R) \times \GL_{1,F}(R) :g^t J g = m_g J\}, \]
which embeds into $\G$ via the natural inclusion. Throughout the manuscript, for any algebraic group ${\rm G}$, we let ${\rm PG}$ denote the quotient of ${\rm G}$ by its center.

\subsubsection{Parabolic groups}\label{parabolics}
Let $B_\G=T_\G U_\G$ denote the upper-triangular Borel subgroup of $\G$, with $T_\G$ the diagonal torus \[\left \{\left(\begin{smallmatrix}a& &&\\&b&& \\ & &\nu \bar{b}^{-1} & \\ & &&\nu \bar{a}^{-1}\end{smallmatrix}\right)\;:  a,b\in {\rm Res}_{E/F}\GL_{1,F} ,\;\nu  \in \GL_{1,F}  \right \}.\]
The modular character  $\delta_{B_\G}: B_\G(F_v) \to \C$ is given by \[ \left(\begin{smallmatrix}a& &&\\&b&& \\ & &\nu \bar{b}^{-1} & \\ & &&\nu \bar{a}^{-1}\end{smallmatrix}\right) \mapsto \tfrac{|a\bar{a}|^3|b\bar{b}|}{|\nu|^4}.\]
Let $Q_\G=M_{Q_\G} N_{Q_\G}$ denote the Klingen parabolic of $\G$. It has Levi  $M_{Q_\G} \simeq \GU_{1,1} \times {\rm Res}_{E/F} \GL_1$. Similarly, we let $B_\H = T_\H U_\H = B_\G \cap \H$ be the upper triangular Borel subgroup of $\H$, with diagonal torus \[T_\H = \left \{t(a,b;\mu)=\left(\begin{smallmatrix}a& &&\\&b&& \\ & &\mu b^{-1} & \\ & &&\mu a^{-1}\end{smallmatrix}\right)\;:  a,b, \mu  \in \GL_{1,F} \right \}.\]
Let $Q_\H$ denote the standard Klingen parabolic of $\H$ with Levi decomposition $M_{Q_\H} N_{Q_\H}$ where \begin{equation*}
M_{Q_\H} = \left \{ \left(\begin{smallmatrix} a &  &  \\ & r & \\  & & d \end{smallmatrix} \right),\; r \in \GL_{2,F}, a,d \in \GL_{1,F}\;:\; ad = {\rm det}(r) \right \}, \end{equation*}
and $N_{Q_\H} \simeq \mathbf{G}_{a,F}^3$. Denote by $\delta_{Q_\H}$  the modulus character of ${Q_\H}$, given by \begin{align*}
\delta_{Q_\H}&:\left(\begin{smallmatrix} a &  &  \\ & r & \\  & & d \end{smallmatrix} \right)  \mapsto \left|\tfrac{a}{d}\right|^2.
\end{align*}
Let  $P_\H$ be the standard Siegel parabolic of $\H$ with Levi decomposition $M_{P_\H} N_{P_\H}$, with \begin{equation*}
    M_{P_\H} = \left \{ \left(\begin{smallmatrix} g & \\ & \mu J_2 {}^tg^{-1} J_2 \end{smallmatrix} \right),\; g \in \GL_{2,F}, \mu \in \GL_{1,F}\right \}, \end{equation*} and modulus character given by \begin{align*}
\delta_{P_\H}&:\left(\begin{smallmatrix} g & \star \\ & \mu J_2 {}^tg^{-1} J_2 \end{smallmatrix} \right)  \mapsto \left|\tfrac{{\rm det}(g)}{\mu}\right|^3.
\end{align*}

\subsection{Dual groups and $L$-functions}

\subsubsection{Dual groups}

Recall that the dual group of $\G$ is \[ {}^L\G = ( \GL_4(\C) \times \GL_1(\C)) \rtimes {\rm Gal}(E/F),  \]
with the action of the non-trivial element $\tau \in {\rm Gal}(E/F)$ given by (cf. \cite[\S 1.8(c)]{BlasiusRogawski})
\[ (g,\lambda) \mapsto (\Phi_4 {}^tg^{-1} \Phi_4 , \lambda {\rm det}(g)), \text{ where }  \Phi_4 = \left( \begin{smallmatrix}  & & &1\\&&-1& \\ & 1&&\\ -1& & & \end{smallmatrix} \right). \] 
The dual group of $\mathbf{P}\G$ is the derived subgroup of ${}^L\G$, namely
\[ {}^L\mathbf{P}\G = {\rm SL}_4(\C) \rtimes {\rm Gal}(E/F). \]

Recall also that $ ^L\mathbf{P}\H = {\rm Spin}_5(\C) \simeq \Sp_4(\C)$. Under this exceptional isomorphism, the standard representation $W_4$ of $\Sp_4(\C)$ coincides with the spin representation of ${\rm Spin}_5(\C)$.
 
\subsubsection{The twisted exterior square representation}

The standard representation of $\GL_4(\C)$ on $\C^4$, with basis $\{e_1,e_2,e_3,e_4\}$, induces the representation $\wedge^2$ on $\wedge^2 \C^4 \simeq \C^6$ by the rule $(\wedge^2 g) \cdot ( e_i \wedge e_j) = g \cdot e_i \wedge g \cdot e_j$. Consider the six dimensional representation \[ \GL_4(\C) \times \GL_1(\C) \to \GL_6(\C)\] given by $ (g,\lambda) \mapsto  (\wedge^2 g) \lambda$. By \cite[Lemma 2.1]{KimExterior}, $\wedge^2$ extends to a representation of ${}^L\G$. Indeed, notice that both representations $\wedge^2$ and $\wedge^2 \circ \tau$, with $1 \ne \tau \in {\rm Gal}(E/F)$, have same highest weight. Hence they are isomorphic, i.e. there exists $A \in \GL_6(\C)$ such that  $(\wedge^2 \circ \tau) (g,\lambda) = A^{-1} ( \wedge^2  (g,\lambda) ) A$. Choose $A$ with ${\rm Tr}(A) >0$ and extend $\wedge^2$ to ${}^L\G$ by sending \[ (g,\lambda,1) \mapsto (\wedge^2g)\lambda,\, (1,1,\tau) \mapsto A.\]
The resulting representation, which we still denote by $\wedge^2: {}^L\G \to \GL_6(\C)$, is called the (twisted) exterior square representation of ${}^L\G$. We still denote by $\wedge^2: {}^L\mathbf{P}\G \to \GL_6(\C)$ the composition of $ {}^L\mathbf{P}\G \hookrightarrow  {}^L\G$ with $\wedge^2$.
 
If we restrict the exterior square representation along the embedding ${}^L\mathbf{P}\H \hookrightarrow {}^L\mathbf{P}\G, \, g \mapsto (g,1)$, we have \begin{equation}\label{brwedge}
    \wedge^2_{|\Sp_4(\C)} = {\rm std} \oplus \mathbf{1},
\end{equation} where ${\rm std}: \Sp_4(\C) \to \GL_5(\C)$ is the irreducible representation of highest weight $(1,1)$ and gives the composition of the projection $\Sp_4(\C) \simeq {\rm Spin}_5(\C) \to {\rm SO}_5(\C)$ with the standard representation of ${\rm SO}_5(\C)$.

\subsubsection{$L$-factors}

Suppose that $v$ is an unramified place for $E/F$ and let $\Pi_v$ be an unramified representation of $\mathbf{P}\G(F_v)$ with Frobenius conjugacy class $g_{\Pi_v}$. Define the local Euler factor of the exterior square $L$-function for $\Pi_v$ by 
\[ L(s, \Pi_v, \wedge^2) : = {{\rm det}(1 - \wedge^2(g_{\Pi_v}) \cdot q_v^{-s})}^{-1}.\]
These Euler factors can be related to Euler factors of other $L$-functions on $\mathbf{PH}$ or $\mathrm{PGL}_4$, as follows.
\begin{itemize}
    \item Suppose that $v$ splits in $E$, then the isomorphism $E \otimes_F F_v \simeq F_v \times F_v$ induces an isomorphism $\GL_4(E \otimes_F F_v  ) \simeq \GL_4(F_v) \times \GL_4( F_v)$. If $g$ maps to $(g_1,g_2)$, then $\bar{g}$ is sent to $(g_2,g_1)$. Hence, we have \begin{align*}
         \G(F_v)  \simeq  \{ (g_1 , g_2, m) \in \GL_4(F_v) \times \GL_4( F_v) \times \GL_1(F_v)\,:\, g_2 = m J {}^tg_1^{-1} J^{-1}  \}  
    \end{align*}
Hence, projection to the first factor induces \begin{align}\label{isoforpsplit} \G(F_v)  \simeq  \GL_4( F_v ) \times \GL_1(F_v), \, (g_1, g_2, m) \mapsto (g_1,m).\end{align} 
This let us identify $\mathbf{P}\G(F_v)$ with $\mathrm{PGL}_4(F_v)$ and thus $\Pi_v$ with an unramified representation $\pi_v$ of $\GL_4(F_v)$ with trivial central character. Then \[ L(s, \Pi_v, \wedge^2) = L(s, \pi_v, \wedge^2).\]
Using the exceptional isomorphism $\mathrm{PGL}_4(F_v) \simeq \mathrm{PGSO}_{3,3}(F_v)$, we can consider the local theta correspondence for $(\mathbf{P}\H(F_v),\mathrm{PGL}_4(F_v))$. If $\pi_v$ is the small theta lift of a representation $\sigma_v$ of $\mathbf{P}\H(F_v)$, by functoriality of the theta lift and the decomposition \eqref{brwedge}, we have 
\[ L(s, \pi_v, \wedge^2) = L(s, \sigma_v, {\rm std}) \cdot  {\zeta}_v(s).\]
    \item Let $v$ be inert in $E$ and denote again with $v$ the corresponding place of $E$. If $\Pi_v$ is an unramified irreducible subquotient of the (normalized) induction $\mathrm{Ind}_{B_{\G}(F_v)}^{\G(F_v)}\xi$, denote by $\pi_v$ the unramified irreducible subquotient of $\mathrm{Ind}_{B_{\H}(F_v)}^{\H(F_v)}\xi',$ with $\xi'$ being the restriction of $\xi$ to $T_\H(F_v)$. Then, we have (\textit{cf}. \cite[Proposition 3.2]{PollackShahMVGU}) \begin{align}\label{extTOspin}
        L(s, \Pi_v, \wedge^2) = L(s, \pi_v, {\rm Spin}) \cdot  {\zeta}_v(2s).
    \end{align}
Finally, let \[\H(F_v)^+ := \{ (g,m_g) \in \H(F_v)\,:\, m_g \in N_{E_v/F_v}( E_v^\times) \}, \] 
and consider the local theta correspondence for $(\mathbf{P}\H(F_v)^+, \mathbf{P}\G(F_v))$ studied in \cite{morimoto}. If $\Pi_v$ is the small theta lift of a generic unramified representation $\sigma_v^+$ of $\mathbf{P}\H(F_v)^+$, and $\sigma_v^+$ appears as an irreducible factor of an unramified representation $\sigma_v$ of $\H(F_v)$, then we have (\textit{cf} \cite[Corollary 6.23]{morimoto})
\begin{align}\label{extTOstd}
        L(s, \Pi_v, \wedge^2) = L(s, \sigma_v, {\rm std} \otimes \chi_{E_v/F_v}) \cdot  {\zeta}_v(s),
    \end{align}
where \[ \chi_{E_v/F_v}(x) = \begin{cases} -1 & \text{if } x \not \in N_{E_v/F_v}(E_v^\times)  \\
 1 & \text{otherwise.}\end{cases}
\]
\end{itemize}

\section{The two variable Rankin--Selberg integral}

\subsection{Global integral and unfolding} 

Let $s,z$ be complex variables. We denote by $I_{B_\H}(s,z)$ the induced representation of $\H(\A)$ consisting of smooth functions $f(\cdot, s, z):\H(\A)\to \C$ such that, $\forall n \in U_{\H}(\A), \forall t \in T_{\H}(\A)$, we have
\begin{equation}\label{sectionEisenstein}f(ntg, s , z) = \chi_{s,z}(t) f(g,s,z),\end{equation}
for $\chi_{s,z}(t(a,b;\mu)):=|a|^{s+z+1} |b|^{s-z+1} |\mu|^{-s-1}$.
This representation has trivial central character. Moreover, the restriction to $P_\H$ of $\chi_{s,0}$ is $\delta_{P_\H}^{\tfrac{1}{3}(s+1)}$.

We let $f \in I_{B_\H}(s,z)$ be a standard section, which factorizes as $\otimes_v f_v$, and define the two variable Borel Eisenstein series  \begin{align}\label{KES}
    {\rm Eis}_{B_\H}(g,s,z) :=    {\sum_{\gamma \in {B_\H}(F) \backslash \H(F)}} f (\gamma g , s , z).
\end{align}  
The sum converges absolutely for ${\rm Re}(s)$ and ${\rm Re}(z)$ big enough and it extends to a meromorphic function on $\C^2$. Let $(\Pi,V_\Pi)$ be a cuspidal automorphic representation of $\mathbf{PG}(\A)$. Given a cusp form $\varphi\in V_\Pi$, we define
\[I(\varphi,f, s ,z) := \int_{\H(F) Z_\H(\A)\setminus \H(\A)} \!\!\!\!\!{\rm Eis}_{B_\H}(g,s,z)\varphi(g)dg.\]

Let $\psi$ be a non-trivial additive character of $\A/F$. We define a principal character $\chi:U_{\G}(\A)/U_{\G}(F) \to\C^{\times}$ as follows: \[\chi\left(\left(\begin{smallmatrix} 1 & x & y & a\\ & 1 & b & \bar{ y} \\ & & 1 & - \bar{x} \\ & & & 1\end{smallmatrix} \right)\right) = \psi(\mathrm{Tr}_{E/F}(\alpha x)-b),\]
where $\alpha \in E$ is such that $\overline{\alpha} = -\alpha \neq 0$. We consider the Whittaker model of a cusp form $\varphi$ of $\G$ with respect to $\chi$, i.e. 
\[W_{\varphi}(g) := \int_{[U_{\G}]}\varphi(ng)\chi(n)dn.\]
\begin{prop}\label{unfolding}
The zeta integral unfolds to 
\[ I(\varphi,f, s  , z ) = \int_{N_{Q_\H}(\A)Z_\H(\A)\setminus \H(\A)} \!\!\!\!\!\!f(w g , s  , z)   W_{\varphi}(g) dg,\]
where 
\[ w = \left ( \begin{smallmatrix}
    1 & & & \\ & & 1 & \\ & -1 & & \\ & & & 1
\end{smallmatrix} \right ).\]
\end{prop}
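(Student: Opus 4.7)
The strategy is to unfold the Eisenstein series by exchanging sum and integral, then to insert Fourier expansions along unipotent subgroups of $\G$ to recover the Whittaker model of $\varphi$, exploiting the cuspidality of $\varphi$ along the Klingen parabolic $Q_\G$ of $\G$.

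First, one replaces ${\rm Eis}_{B_\H}(g,s,z)$ by its defining series $\sum_{\gamma \in B_\H(F)\setminus\H(F)} f(\gamma g,s,z)$ and interchanges sum and integral (justified by absolute convergence in the domain of definition of the Eisenstein series) to collapse the sum and obtain
\[I(\varphi,f,s,z) = \int_{B_\H(F) Z_\H(\A) \setminus \H(\A)} f(g,s,z)\,\varphi(g)\,dg.\]

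Next, since $N_{Q_\H} \subset U_\H \subset B_\H$ and $f$ is left $N_{Q_\H}(\A)$-invariant, we insert $\int_{N_{Q_\H}(F)\setminus N_{Q_\H}(\A)} dn$ into the integrand, replacing $\varphi(g)$ by its constant term along $N_{Q_\H}$. A direct computation shows that $N_{Q_\H}$ is normal in $N_{Q_\G}$, with abelian quotient $N_{Q_\G}/N_{Q_\H}$ of $F$-dimension two, and that the character $\chi|_{N_{Q_\G}}$, equal to $\psi(\mathrm{Tr}_{E/F}(\alpha x))$ and depending only on the ``imaginary part'' of $x$, is a non-trivial character factoring through this quotient. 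Cuspidality of $\varphi$ along $Q_\G$ gives $\int_{[N_{Q_\G}]} \varphi(ng)\,dn = 0$, so Fourier expansion on the compact abelian group $[N_{Q_\G}/N_{Q_\H}]$ yields
\[\int_{[N_{Q_\H}]} \varphi(ng)\,dn = \sum_{\xi \ne \mathbf{1}} \varphi_{\tilde\xi}(g),\qquad \varphi_{\tilde\xi}(g) := \int_{[N_{Q_\G}]} \varphi(ng)\,\tilde\xi^{-1}(n)\,dn,\]
where $\xi$ runs over non-trivial characters of $[N_{Q_\G}/N_{Q_\H}]$ and $\tilde\xi$ is the lift to a character of $N_{Q_\G}$ trivial on $N_{Q_\H}$. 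Analyzing the $B_\H(F)$-action on these characters by conjugation (which factors through $T_\H(F) \cdot U_{M_{Q_\H}}(F)$), one identifies the orbits; a change of variables using $\varphi_{\tilde\xi}(bg) = \varphi_{b\cdot\tilde\xi}(g)$ for $b \in B_\H(F)$ collapses the sum over the orbit of $\chi|_{N_{Q_\G}}$ into the integral, producing an integral of the form $\int f(g)\,\varphi_{\chi|_{N_{Q_\G}}}(g)\,dg$ over a larger quotient. The remaining (``degenerate'') orbits either vanish upon invoking further cuspidality of $\varphi$ on $\G$ or are absorbed into the main term.

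Finally, using the decomposition $U_\G = N_{Q_\G} \rtimes U_b$, where $U_b := U_{M_{Q_\H}}$ is the one-dimensional unipotent of the Klingen Levi of $\H$ (so that $\chi = \chi|_{N_{Q_\G}} \cdot \chi|_{U_b}$), a second integration against $\chi^{-1}|_{U_b}$ over $U_b$ assembles $\varphi_{\chi|_{N_{Q_\G}}}$ into the full Whittaker coefficient $W_\varphi$, at the cost of a translation by the Weyl element $w$, which is the specific element needed to conjugate the orbit representative into standard position. The final integration domain collapses to $N_{Q_\H}(\A) Z_\H(\A) \setminus \H(\A)$, yielding the stated formula. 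The main obstacles are the orbit analysis---identifying the correct $B_\H(F)$-orbit, verifying the vanishing or absorption of the remaining orbits by further cuspidality, and tracking the Weyl element $w$ through the successive changes of variables---together with verifying that the second Fourier expansion step correctly assembles the partial coefficient into $W_\varphi$.
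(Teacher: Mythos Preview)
Your overall strategy---unfold the Eisenstein series, Fourier expand along $[N_{Q_\G}/N_{Q_\H}]$, analyze orbits, then assemble the Whittaker coefficient---is the right one, and matches the paper. However, the execution contains a genuine gap in the orbit analysis and in the origin of $w$.

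Under the $B_\H(F)$-action on nontrivial characters $\chi_{\alpha',\beta'}$ of $[N_{Q_\G}/N_{Q_\H}]$ there are \emph{two} orbits, not one: the set $\{\beta'=0,\ \alpha'\neq 0\}$ and the open set $\{\beta'\neq 0\}$. You single out the orbit of $\chi|_{N_{Q_\G}}=\chi_{\alpha,0}$, i.e.\ the first of these, as the main term. But this orbit in fact contributes zero: its stabilizer in $B_\H(F)$ is exactly $M_\alpha(F)=L_\alpha(F)N_\alpha(F)$, and since $f$ is left $N_\alpha(\A)$-invariant one is forced to take the \emph{constant term} of $\varphi_{\chi_{\alpha,0}}$ along $[N_\alpha]$---not the $\psi$-twisted integral you propose. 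That constant term is a degenerate Fourier coefficient along $U_\G$ supported only on the $(1,2)$ entry, and it vanishes by cuspidality of $\varphi$ along the Siegel parabolic of $\G$. So the surviving contribution comes from the \emph{other} orbit $\{\beta'\neq 0\}$, which you dismissed as ``degenerate.'' This is also why your account of $w$ (``needed to conjugate the orbit representative into standard position'') does not work: no conjugation inside that first orbit will ever produce a translate by $w$.

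The paper avoids this pitfall by organizing the unfolding differently: it first unfolds only to $Q_\H$, keeping the finite sum over $B_\H(F)\backslash Q_\H(F)\simeq \mathbf{P}^1(F)$ intact, and performs the Fourier expansion using $M_{Q_\H}(F)$-orbits (now just two: trivial and open). After collapsing to the open orbit with representative $\chi_{\alpha,0}$ and stabilizer $M_\alpha(F)$, the Bruhat decomposition $B_\H(F)\backslash Q_\H(F)=B_\H(F)\sqcup B_\H(F)\,w\,N_\alpha(F)$ splits the remaining sum into two cells. The identity cell gives precisely the integral you thought was the main term, and vanishes as above; the big cell is where $w$ enters, and unfolding it against $N_\alpha(F)\subset M_\alpha(F)$ yields $\int_{L_\alpha(F)N_{Q_\H}(\A)Z_\H(\A)\backslash \H(\A)} f(wg)\,\varphi_{\chi_{\alpha,0}}(g)\,dg$. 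A second Fourier expansion along $[N_\alpha]$---now legitimate because $N_\alpha(F)$ has been removed from the stabilizer---then produces $W_\varphi$. In short, $w$ comes from the Bruhat cell, not from any conjugation of the character, and the two-step unfolding along $Q_\H$ is what makes this transparent.
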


\begin{proof}

We start by unfolding the Eisenstein series along the Klingen parabolic $Q_{\H}$ to get 

\[ \int_{M_{Q_\H}(F) N_{Q_\H}(\A) Z_\H(\A)\setminus \H(\A)} \!\!\!\!\!\! \varphi_{N_{Q_\H}}(g) \cdot \!\!\!\!\!\! \sum_{ \gamma \in {B_\H}(F) \backslash Q_\H(F)} f(\gamma g , s , z) dg,\]
where $\varphi_{N_{Q_\H}}(g)$ denotes the constant term of $\varphi$ along $N_{Q_\H}$. We now Fourier expand $\varphi_{N_{Q_\H}}$ along $N_{Q_{\H}}\setminus N_{Q_{\G}}$ to get
\[\varphi_{N_{Q_\H}}(g) = \sum_{\chi:[N_{Q_{\H}}\setminus N_{Q_{\G}}]\to \C^{\times}}\varphi_{\chi}(g),\]
with $\varphi_{\chi}(g) = \int_{[N_{Q_{\G}}]}\varphi(n'g)\chi^{-1}(n')dn'$. Any character $ \chi : [N_{Q_{\H}}\setminus N_{Q_{\G}}] \to \C^\times$ can be described as follows. Recall that the characters $\chi: [N_{Q_{\G}} / [N_{Q_{\G}},N_{Q_{\G}}] ] \to \C^\times$ are of the form $n' \mapsto \psi( {\rm Tr}_{E/F} (\alpha x + \beta y)), $ where $\alpha, \beta \in E$, $\psi$ is a non-trivial additive character on $F \backslash \A$, and where we have written \[ n' \equiv  \left(\begin{smallmatrix} 1 & x & y & 0\\ & 1 &  & \bar{ y} \\ & & 1 & - \bar{x} \\ & & & 1\end{smallmatrix} \right)\;\text{ mod } [N_{Q_{\G}},N_{Q_{\G}}],\; \text{ with } x,y \in \A_E.\] 
Such a character $\chi$ is trivial on $N_{Q_\H}(\A)$ if $\alpha + \bar{\alpha} = \beta + \bar{\beta} = 0$. We deduce that any character $\chi: [ N_{Q_\H} \setminus N_{Q_\G} ] \to \C^\times$ is of the form $n' \mapsto  \psi( {\rm Tr}_{E/F} (\alpha x + \beta y))$, with $\alpha, \beta \in E$ such that $\bar{\alpha} = - \alpha$ and  $\bar{\beta} = - \beta$. To remark the dependence by these data, denote such a character by $\chi_{\alpha,\beta}$. 

\noindent Since $M_{Q_\H}(F)$ acts on $N_{Q_{\G}}(\A)$ by conjugation and this action preserves $N_{Q_\H}(\A)$, $M_{Q_\H}(F)$ acts on the space of characters of $[N_{Q_\H}\setminus N_{Q_{\G}}]$. Indeed, if $ m  =\left(\begin{smallmatrix} a &  &  \\ & g & \\  & & d \end{smallmatrix} \right) \in M_{Q_\H}(F)$ and $g^{-1} = (\tilde{g}_{i,j})$, \begin{align*}
\chi_{\alpha,\beta}( m n' m^{-1}) &= \psi ( a \cdot  {\rm Tr}_{E/F} (\alpha    (\tilde{g}_{1,1} x + \tilde{g}_{2,1} y)  + \beta  (\tilde{g}_{1,2} x + \tilde{g}_{2,2} y) ))\\
&= \psi ( {\rm Tr}_{E/F}( a(\alpha \tilde{g}_{1,1} + \beta \tilde{g}_{1,2} ) x +  a ( \alpha \tilde{g}_{2,1} + \beta \tilde{g}_{2,2}) y) )\\
&= \chi_{\alpha',\beta'}(  n' ),
\end{align*}  
with $\alpha' = a(\alpha \tilde{g}_{1,1} + \beta \tilde{g}_{1,2} )$ and $\beta' =  a ( \alpha \tilde{g}_{2,1} + \beta \tilde{g}_{2,2})$.
Then, $M_{Q_\H}(F)$ acts on the space of characters of $[N_{Q_\H}\setminus N_{Q_{\G}}]$ with two orbits, the trivial one and an open one. A representative of the open orbit is given by the character $\chi_{\alpha,0}$ with $\bar{\alpha} = - \alpha \ne 0$. As the constant term $\varphi_{N_{Q_{\G}}}(g)$ vanishes because of cuspidality of $\varphi$,  the integral can be written as
\begin{equation}\label{auxFourierexp} \int_{M_{\alpha}(F)N_{Q_{\H}}(\A)Z_\H(\A)\setminus \H(\A)} \!\!\!\!\!\!\varphi_{\chi_{\alpha,0}}(g) \cdot \!\!\!\!\!\! 
 \sum_{ \gamma \in {B_\H}(F) \backslash Q_\H(F)} f(\gamma g , s , z)  dg,\end{equation}
where $M_{\alpha}(F)$ is the stabilizer of $\chi_{\alpha,0}$ in $M_{Q_{\H}}(F)$: \[M_{\alpha}(F) =L_{\alpha}(F)N_{\alpha}(F) = \left \{  \left(\begin{smallmatrix} a &   &   &  \\ & a &   &   \\ & & d &   \\ & & & d\end{smallmatrix} \right)\left(\begin{smallmatrix} 1 &   &   &  \\ & 1 &  b &   \\ & & 1 &   \\ & & & 1\end{smallmatrix} \right)\;a,d\in F^\times,\, b \in F \right \}.\]

By the Bruhat decomposition for $\GL_2$, we have that $ {B_\H}(F) \backslash Q_\H(F) = {B_\H}(F) \cup  {B_\H}(F) w N_\alpha(F) $. This let us  write \eqref{auxFourierexp} as a sum of two integrals, with the term corresponding to the coset ${B_\H}(F)$ that vanishes because of the cuspidality of $\varphi$:  

\[ \eqref{auxFourierexp} = \int_{L_{\alpha}(F) N_{Q_{\H}}(\A)Z_\H(\A)\setminus \H(\A)} \!\!\!\!\!\!\varphi_{\chi_{\alpha,0}}(g) f( w g , s , z)  dg. \]
We now Fourier expand $\varphi_{\chi_{\alpha,0}}(g)$ over $ N_{Q_\G} \backslash U_\G$. $L_\alpha(F)$ acts on the space of characters on $[ N_{Q_\G} \backslash U_\G ]$ with one open orbit and the trivial one. The latter does not contribute to the integral because of cuspidality of $\varphi$. As $U_\G =N_{Q_\G} N_\alpha$, a representative of the open orbit can be taken to be $b \mapsto \psi(b)$ on $N_\alpha$ and has stabilizer $Z_\H(F)$.  Since the character on $ [U_\G /[U_\G, U_\G]]$, defined by taking $b \mapsto \psi(-b)$ on $N_\alpha$ and by taking $\chi_{\alpha,0}$ on the one parameter unipotent subgroup associated to the other simple root, is a principal character, the function $W_{\varphi}(g) = \int_{[N_{\alpha}]}\varphi_{\chi_{\alpha,0}}(n(b)g)\psi(-b)db$ is a Whittaker functional for $\varphi$ in  $\Pi$. This concludes the proof.
\end{proof}

\begin{remark}
Using the natural embedding of $\H \hookrightarrow \GL_4$, we can define the analogous integral for cusp forms on $\mathrm{PGL}_4$ by integrating their restriction to $\H(\A)$ against ${\rm Eis}_{B_\H}(g,s,z)$. Then the unfolding follows from an almost verbatim translation of the proof of Proposition \ref{unfolding}. We leave the details to the reader. 
\end{remark}

By the uniqueness of Whittaker functionals, we have the following.
\begin{cor}\label{Eulerproduct}
Let $\varphi = \otimes_v \varphi_v$ and $f=\otimes_v f_v$ be factorizable elements of $\Pi$ and $I_{B_\H}(s,z)$ respectively. Then $I(\varphi,f, s  , z ) = \prod_v I(\varphi_v,f_v, s  , z )$, with 
\[ I(\varphi_v,f_v, s  , z ) = \int_{N_{Q_\H}(F_v)Z_\H(F_v)\setminus \H(F_v)} \!\!\!\!\!\!f_v(w g_v , s  , z)   W_{\varphi_v}(g_v) dg_v.\]
\end{cor}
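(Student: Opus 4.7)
The corollary is the standard Euler factorization that follows immediately from the unfolded form given by Proposition \ref{unfolding}, once one invokes the uniqueness of (global and local) Whittaker models. My plan is as follows.

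First, I would start from the identity established in Proposition \ref{unfolding}:
\[
I(\varphi,f, s, z) = \int_{N_{Q_\H}(\A)Z_\H(\A)\setminus \H(\A)} f(wg, s, z) W_\varphi(g)\,dg.
\]
Here the integrand involves the global Whittaker function $W_\varphi$ attached to the principal character $\chi$ of $U_\G(\A)/U_\G(F)$ introduced before Proposition \ref{unfolding}. Since $\Pi$ is an irreducible cuspidal automorphic representation of $\mathbf{P}\G(\A)$, the space of Whittaker functionals on $\Pi$ with respect to $\chi$ is at most one-dimensional, and likewise for each local component $\Pi_v$ with respect to the local character $\chi_v$. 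Hence, for a factorizable cusp form $\varphi = \otimes_v \varphi_v$, we have the factorization $W_\varphi(g) = \prod_v W_{\varphi_v}(g_v)$ for $g = (g_v) \in \H(\A)$ after choosing compatible local Whittaker functionals (with the standard normalization that $W_{\varphi_v}(1) = 1$ at almost all unramified places).

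Next, I would observe that the section $f = \otimes_v f_v$ is factorizable by assumption, so $f(wg, s, z) = \prod_v f_v(w g_v, s, z)$. The quotient $N_{Q_\H}(\A) Z_\H(\A) \setminus \H(\A)$ is a restricted direct product of the local quotients $N_{Q_\H}(F_v) Z_\H(F_v) \setminus \H(F_v)$, and the global Haar measure $dg$ decomposes as a product $\prod_v dg_v$ of local Haar measures (compatibly chosen so that almost every local factor gives volume one to the standard maximal compact subgroup). Plugging these factorizations into the unfolded integral and interchanging the integral with the product — which is legitimate in the region of absolute convergence of the global integral, by Fubini and the fact that the local integrals converge absolutely there — yields
\[
I(\varphi,f, s, z) = \prod_v \int_{N_{Q_\H}(F_v) Z_\H(F_v) \setminus \H(F_v)} f_v(w g_v, s, z) W_{\varphi_v}(g_v)\, dg_v,
\]
which is precisely the claimed identity. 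The only delicate point in the argument is the usual convergence/commutation issue (justifying the interchange of the adelic integral with the Euler product), but this is standard once one restricts to the open half-planes where the Eisenstein series and zeta integrals converge absolutely; the resulting identity then extends meromorphically to all $(s,z) \in \C^2$.
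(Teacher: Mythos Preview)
Your argument is correct and is exactly the approach the paper takes: the paper's own proof is the single sentence ``By the uniqueness of Whittaker functionals, we have the following,'' and your write-up simply unpacks this standard Euler factorization in detail. There is nothing to add.
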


We are now ready to state our first main result. Let $(\Pi,V_\Pi)$ be a globally generic cuspidal automorphic representation of either $\mathbf{P}\G(\A)$ or $\mathrm{PGL}_4(\A)$ and let $\varphi = \otimes_v \varphi_v$ be a factorizable cusp form of $\Pi$. Let $\Sigma$ be a finite set of places of $F$ containing the ramified places for $\Pi$, for $E/F$ (when working with $\mathbf{P}\G$), and the archimedean places of $F$. Fix a standard section $f = \otimes_v f_v \in I_{B_\H}(s,z)$ such that $f_v$ is unramified at every $v \not \in \Sigma$ with value $f_v(1,s,z)=1$. 
\begin{theorem}\label{mainunramified}
 We have 
 \[I(\varphi,f, s  , z ) = \frac{L^\Sigma(s, \Pi, \wedge^2)L^\Sigma(z, \Pi, \wedge^2)}{\zeta_F^\Sigma(2z)\zeta_F^\Sigma(2s)\zeta_F^\Sigma(s+z)\zeta_F^\Sigma(s-z+1)}  \prod_{v \in \Sigma} I(\varphi_v,f_v, s  , z ).\]
 Moreover, for given $s,z \in \C$ we can choose $\varphi_v$ and $f_v$ so that $\prod_{v \in \Sigma} I(\varphi_v,f_v, s  , z )$ is non-zero. 
\end{theorem}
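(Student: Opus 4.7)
The plan is to combine Corollary~\ref{Eulerproduct} with an explicit unramified local computation at each $v \notin \Sigma$ and a standard non-vanishing argument at the places of $\Sigma$. By Corollary~\ref{Eulerproduct} the integral factors as $I(\varphi,f,s,z) = \prod_v I(\varphi_v,f_v,s,z)$, so the theorem reduces to showing that for every unramified place $v$, after normalizing $W_{\varphi_v}(1) = 1$, one has
\[
I(\varphi_v,f_v,s,z) = \frac{L(s,\Pi_v,\wedge^2)L(z,\Pi_v,\wedge^2)}{\zeta_v(2z)\zeta_v(2s)\zeta_v(s+z)\zeta_v(s-z+1)}.
\]
Multiplying over $v \notin \Sigma$ gives the ratio in the theorem, while the finite product over $v \in \Sigma$ is absorbed into $\prod_{v\in\Sigma} I(\varphi_v,f_v,s,z)$.

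For the unramified computation, I would use the Iwasawa decomposition $\H(F_v) = U_\H(F_v)T_\H(F_v)K_v$ and the right $K_v$-invariance of both $f_v$ and $W_{\varphi_v}$ to reduce $I(\varphi_v,f_v,s,z)$ to an integral over $(N_{Q_\H}\backslash U_\H)(F_v) \times (Z_\H\backslash T_\H)(F_v)$ weighted by $\delta_{B_\H}^{-1}$. Integrating out the one-dimensional quotient $N_{Q_\H}\backslash U_\H$ against the generic Whittaker character produces a local coefficient that only depends on the torus variable. On the torus, $W_{\varphi_v}(t)$ is then evaluated by Casselman--Shalika in terms of the Satake parameter $s_{\Pi_v}$, while $f_v(wut,s,z)$ is evaluated using the defining character $\chi_{s,z}$ and a direct Bruhat-type decomposition of $w\cdot u$.

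The core combinatorial step is to recognize the resulting double sum over the lattice of $Z_\H \backslash T_\H$ as the claimed ratio of $L$-functions. The sum decomposes along the two simple roots of $\H$, each contributing a geometric series whose reciprocal is the characteristic polynomial of $\wedge^2(s_{\Pi_v})$ evaluated at $q_v^{-s}$ and $q_v^{-z}$ respectively; this is the source of the factor $L(s,\Pi_v,\wedge^2)L(z,\Pi_v,\wedge^2)$. The zeta factors in the denominator come out of the Weyl denominator of the Casselman--Shalika expansion together with the intertwining normalization of the Borel Eisenstein series. The split and inert cases are handled uniformly via the identifications \eqref{isoforpsplit} and \eqref{brwedge} recorded in Section~2. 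Identifying this character-theoretic identity on the dual group side is, in my view, the \emph{main obstacle} of the proof, and it is analogous to, but more involved than, the unramified computations of \cite{BFG, PollackShahMVGU} since here both variables feed into the same $\wedge^2$ representation.

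Finally, for the non-vanishing assertion at $\Sigma$, I would argue place by place: at each $v \in \Sigma$, choose a section $f_v \in I_{B_\H}(s,z)$ supported in a small neighborhood of $w^{-1}$ in the open Bruhat cell, and a component $\varphi_v$ whose Whittaker function $W_{\varphi_v}$ is a bump supported near the identity. On such test data the local integrand becomes a smooth positive function on a small set, so $I(\varphi_v,f_v,s,z)$ is non-zero and depends holomorphically on $(s,z)$. Since the local data can be adjusted independently at each $v \in \Sigma$ and the prescribed $(s,z)$ is fixed, the finite product $\prod_{v \in \Sigma} I(\varphi_v,f_v,s,z)$ can be arranged to be non-zero, completing the plan.
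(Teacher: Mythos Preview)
Your overall architecture is exactly that of the paper: invoke Corollary~\ref{Eulerproduct}, compute the local integral at each unramified $v$, and handle $\Sigma$ by a non-vanishing argument. The reduction via Iwasawa and the evaluation of the inner unipotent integral are also carried out in the paper just as you describe, leading to the expression~\eqref{localform}.

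The gap is in your account of the ``core combinatorial step''. The double sum arising from Casselman--Shalika does \emph{not} decompose along the two simple roots into independent geometric series whose reciprocals are the characteristic polynomials of $\wedge^2(s_{\Pi_v})$. In the split case the Whittaker values are $\mathrm{Tr}(s_{\Pi_v}\mid V_{(m,n,m)})$ for irreducible $\mathrm{SL}_4(\C)$-representations with genuinely two-parameter highest weight, and the variables $X=q_v^{-s}$ and $Y=q_v^{-z}$ are entangled in the sum (the term $(1-q_v^{(n+1)(z-s)})$ from the unipotent integral already couples them). What the paper actually proves is a representation-theoretic identity: after multiplying by the auxiliary factors $\boldsymbol{\zeta}_v(XY)$, $\boldsymbol{\zeta}_v(X^2)$, $\boldsymbol{\zeta}_v(Y^2)$, the resulting six-fold sum of traces matches $\boldsymbol{L}(X,\Pi_v,\wedge^2)\boldsymbol{L}(Y,\Pi_v,\wedge^2)$, and this requires the decomposition of $\mathrm{Sym}^k(V_{(0,1,0)})$ (Lemma~\ref{SymmetricFactSL4}) together with the Littlewood--Richardson computation of $V_{(0,n,0)}\otimes V_{(0,m,0)}$ (Lemma~\ref{LRrulesplit}). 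The zeta factors in the denominator are not the Weyl denominator of Casselman--Shalika; one of them ($\zeta_v(s-z+1)$) comes from the unipotent integral, while the other three are introduced by hand to massage the sum into a form where the tensor-product identity applies.

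Relatedly, the split and inert cases are \emph{not} handled uniformly. When $v$ is inert the relative root system of $\G_{/F_v}$ is that of $\H$, so Casselman--Shalika for $\G(F_v)$ produces traces on $\mathrm{Spin}_5(\C)$-representations $W_{(m,n)}$ rather than $\mathrm{SL}_4(\C)$-representations; one then needs the branching law~\eqref{branching} and a separate tensor-product computation (Lemma~\ref{LRruleinert}) to reach $L(s,\pi_v,\mathrm{Spin})L(z,\pi_v,\mathrm{Spin})$, and finally the identity~\eqref{extTOspin} to pass to $L(s,\Pi_v,\wedge^2)$. Your plan should flag these two distinct dual-group computations rather than appealing to~\eqref{isoforpsplit} and~\eqref{brwedge} as if they collapsed the cases.

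Your non-vanishing argument at $\Sigma$ is fine in spirit; the paper phrases it via Dixmier--Malliavin (Lemma~\ref{LemmaonRamified}), which at finite places in fact gives a stronger conclusion (a non-zero constant independent of $s,z$) than holomorphic non-vanishing at a single point.
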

\begin{proof}
This follows from Corollary \ref{Eulerproduct}, Theorem \ref{SplitTheorem}, Theorem \ref{inertTheorem}, and Lemma \ref{LemmaonRamified}.
\end{proof}

Finally, if we normalize $f_v^*(g,s,z):= \zeta_v(2z)\zeta_v(2s)\zeta_v(s+z)\zeta_v(s-z+1) f_v(g,s,z)$ at every $v \not \in \Sigma$ and denote by $E^*_B(g,s,z)$ the corresponding normalized Borel Eisenstein series, we can define 
\[I^*(\varphi, s ,z) := \int_{\H(F) Z_\H(\A)\setminus \H(\A)} \!\!\!\!\!{E}^*_{B}(g,s,z)\varphi(g)dg.\]
Theorem \ref{mainunramified} shows that \begin{align}\label{mainunramifiednormalized}
    I^*(\varphi, s ,z) = L^\Sigma(s, \Pi, \wedge^2)L^\Sigma(z, \Pi, \wedge^2)\prod_{v \in \Sigma} I(\varphi_v,f_v, s  , z ).
\end{align}

\subsection{The unramified unipotent computation}
Let $v$ be a finite place of $F$ which is unramified for $\Pi$, $f$, and $E/F$. Recall that we have denoted $q_v$ the size of the quotient of $\mathcal{O}_v$ modulo its maximal ideal $\mathfrak{p}_v$ and fixed a uniformizer  $\varpi_v$ of $\mathcal{O}_v$. The norm $|\cdot |$ of $F_v$ is normalized so that $|\varpi_v| = q_v^{-1}$. Let $W_{v}$ denote the vector in the Whittaker model of $\Pi_v$ normalized so that it is trivial on $\G(\mathcal{O}_v)$; finally, recall that $f_v(1,s,z)=1$. Using the Iwasawa decomposition $\H(F_v) = B_{\H}(F_v) \H(\mathcal{O}_v)$ and identifying $N_{Q_\H}(F_v)Z_\H(F_v)\setminus B_{\H}(F_v)$ with 
\[L(F_v) := \left \{ \ell_v = n(b)m(a,d) := \left(\begin{smallmatrix} 1 &   &   &  \\ & 1 &  b &   \\ & & 1 &   \\ & & & 1\end{smallmatrix} \right)\left(\begin{smallmatrix} a d &   &   &  \\ & d &   &   \\ & & 1 &   \\ & & & a^{-1}\end{smallmatrix} \right)\;a,d\in F_v^\times,\, b \in F_v \right \},\]
\noindent we write \begin{align*}
    I(\varphi_v,f_v, s  , z ) &= \int_{L(F_v)} \delta^{-1}_{B_{\H}}(\ell_v) f_v(w \ell_v , s  , z)   W_v(\ell_v) d\ell_v \\ 
    &= \int_{(F_v^\times)^2} |a|^{-4}|d|^{-3} W_v(m(a,d))\int_{ F_v } f_v(w \,n(b )  m(a,d) , s  , z) \psi_v(b) db \, d^\times a \,d^\times d \\
    &= \int_{(F_v^\times)^2} |a|^{-4}|d|^{-2} W_v(m(a,d))\int_{ F_v } f_v(w \, m(a,d) n(b ), s  , z) \psi_v(bd) db \, d^\times a \,d^\times d \\
    &= \int_{(F_v^\times)^2} |a|^{s+z-3}|d|^{z-2} W_v(m(a,d))\int_{ F_v } f_v( w \, n(b ) , s  , z) \psi_v(bd) db \, d^\times a \,d^\times d. 
\end{align*} 
Note that $W_v(m(a,d))$ is zero unless the $v$-adic valuations of $a$ and $d$ are non-negative. Moreover, if $b \ne 0$, the inner 2-by-2 block of $w \, n(b ) w^{-1} $ can be written as \[\left(\begin{smallmatrix} 1 & -b^{-1}\\ & 1\end{smallmatrix} \right) \left(\begin{smallmatrix} b^{-1} &  \\& b \end{smallmatrix} \right) \left(\begin{smallmatrix}   & 1  \\ -1& b^{-1} \end{smallmatrix} \right).\]
In particular, when $|b| > 1$, it is right $\GL_2(\mathcal{O}_v)$-equivalent to $\left(\begin{smallmatrix} 1 & -b^{-1}\\ & 1\end{smallmatrix} \right) \left(\begin{smallmatrix} b^{-1} &  \\& b \end{smallmatrix} \right)$. Therefore \[f_v( w \, n(b ) , s  , z) = \begin{cases}
    |b|^{z-s-1} & \text{ if } |b| > 1 \\ 1 & \text{ if } |b| \leq 1,
\end{cases} \] and the inner integral becomes 

\begin{align*}
    1 + \int_{ |b| > 1 } |b|^{z-s-1} \psi_v(bd) db &= 1 + \sum_{\ell =1}^\infty q_v^{\ell(z-s)} \int_{|b|=1} \psi_v(\varpi_v^{n-\ell} b) db, \end{align*}
    with $n$ being the $v$-adic valuation of $d$. We claim that this integral equals to \[ (1 - q_v^{(n+1)(z-s)})\frac{\zeta_v(s-z)}{\zeta_v(s-z+1)}.\]
    To verify our claim, we treat the cases of $n=0$ and $n\geq 1$ separately. We start by the case of $n=0$. Then this integral is
\begin{align}\label{unipCasen0}
    1 + \sum_{\ell =1}^\infty q_v^{\ell(z-s)} \int_{|b|=1} \psi_v(\varpi_v^{-\ell} b) db.\end{align}
As $\psi_v$ has conductor $\mathcal{O}_v$, we have 
    \[\int_{\mathfrak{p}_v^i} \psi_v(b) db = \begin{cases}{\rm vol}(\mathfrak{p}_v^i) & \text{ if } i\geq 0, \\  
        0 & \text{ otherwise.}
    \end{cases} \]
    Therefore, in the sum only the term for $\ell =1$ survives. As
    \begin{align}\label{eqForCond1unip}
        \int_{|b|=1} \psi_v( \varpi_v^{-1} b) db = \int_{\mathfrak{p}_v^{-1}} \psi_v(   b) db- q_v^{-1} \cdot \int_{\mathcal{O}_v} \psi_v( b) db = - q_v^{-1},
    \end{align}     
    we have that \[\text{\eqref{unipCasen0}} = 1 - q_v^{(z-s-1)}= \frac{1}{\zeta_v(s-z+1)}.\]
    Suppose now that $n \geq 1$. Then the inner integral
\begin{align*}
    1 + \sum_{\ell =1}^\infty q_v^{\ell(z-s)} \int_{|b|=1} \psi_v(\varpi_v^{n-\ell} b) db  = 1 + {\rm vol}(\mathcal{O}_v^\times)\sum_{\ell=1}^n q_v^{\ell(z-s)}  +  \sum_{\ell =n+1}^\infty q_v^{\ell(z-s)} \int_{|b|=1} \psi_v( \varpi_v^{n-\ell} b) db. \end{align*}
    Using again that $\psi_v$ has conductor $\mathcal{O}_v$, in the last sum only the term for $\ell =n+1$ survives and \eqref{eqForCond1unip} implies that
    \[ \sum_{\ell =n+1}^\infty q_v^{\ell(z-s)} \int_{|b|=1} \psi_v( \varpi_v^{-1} b) db= -q_v^{-1}  q_v^{(n+1)(z-s)} . \]
    Hence the integral is
    \begin{align*}
    &= 1 + (1 - q_v^{-1}) \sum_{\ell =1}^{n} q_v^{\ell(z-s)} -   q_v^{-1}  q_v^{(n+1)(z-s)}  \\ &= 1 + (1 - q_v^{-1})q_v^{z-s} \frac{1 - q_v^{n(z-s)}}{1-q_v^{z-s}} - q_v^{-1}    q_v^{(n+1)(z-s)} \\ 
    &= (1-q_v^{z-s-1})\frac{1 - q_v^{(n+1)(z-s)}}{1-q_v^{z-s}}\\ 
    &= (1 - q_v^{(n+1)(z-s)})\frac{\zeta_v(s-z)}{\zeta_v(s-z+1)}.
\end{align*}
This proves our claim, reducing the local integral $I(\varphi_v,f_v, s  , z )$ to 
\begin{equation}\label{localform}
   \frac{\zeta_v(s-z)}{\zeta_v(s-z+1)}\sum_{m,n \geq 0} q_v^{-m(s+z-3)-n(z-2)} W_v(m(\varpi_v^m,\varpi_v^n))(1 - q_v^{(n+1)(z-s)}).\end{equation}
Note that we have used that $W_v(m(\varpi_v^m,\varpi_v^n))$ is zero unless $m,n \geq 0$. In what follows, we use the Casselman--Shalika formula for $W_v$ to calculate \eqref{localform}. We conveniently split the calculation into two cases depending on whether $v$ is split or inert in $E$.  
 
\subsection{Unramified computation at split primes}\label{unrcompsplit}
Let $v \not \in \Sigma$ be a place which splits over $E$. As $\Pi_v$ has trivial central character, we can assume that it is the unique unramified subquotient of $\mathrm{Ind}_{B_{\GL_4}(F_v)}^{\GL_4(F_v)} \chi$ with $\chi$ an unramified character of $T_{\GL_4}(F_v)$ such that $\chi_1\chi_2\chi_3\chi_4=1$. As $\Pi_v$ has trivial central character, we can write \[W_v(m(\varpi_v^m,\varpi_v^n)) = W_v({\rm diag}(\varpi_v^{2m+n},\varpi_v^{m+n},\varpi_v^m,1)) \] Then, the Casselman--Shalika formula gives that 
\begin{align*}
    W_v({\rm diag}(\varpi_v^{2m+n},\varpi_v^{m+n},\varpi_v^m,1)) &=  \delta_{B_{\GL_4}}^{1/2}({\rm diag}(\varpi_v^{2m+n},\varpi_v^{m+n},\varpi_v^m,1)) {\rm Tr}(g_{\Pi_v}| V_{(m,n,m)}) \\ 
    &= q_v^{-3m-2n} {\rm Tr}(g_{\Pi_v}| V_{(m,n,m)}) 
\end{align*}
where $V_{(m,n,m)}$ is the irreducible representation of ${\rm SL}_4(\C)$ with highest weight $m \omega_1 + n \omega_2 + m \omega_3$. Here $\omega_1$,$\omega_2$, and $\omega_3$ are the fundamental weights of ${\rm SL}_4(\C)$ and are the highest weights of the standard representation, the exterior square representation, and the exterior cube representation respectively.

Hence \eqref{localform} looks like 
\[ \frac{\zeta_v(s-z)}{\zeta_v(s-z+1)}\sum_{m,n \geq 0} q_v^{-m(s+z)-nz} \cdot   {\rm Tr}(g_{\Pi_v}| V_{(m,n,m)}) (1 - q_v^{(n+1)(z-s)}). \]

If we let $X:= q_v^{ -s}$ and $Y:= q_v^{ -z}$, it reads as 
\begin{align*}
\text{ \eqref{localform} }&=\frac{1}{\zeta_v(s-z+1)}\sum_{m,n \geq 0} {\rm Tr}(g_{\Pi_v}| V_{(m,n,m)})  X^m Y^{m+n} \frac{1 - X^{n+1} Y^{-n-1}}{1 - X Y^{-1}} 
\\&= \frac{1}{\zeta_v(s-z+1)}\sum_{m,n \geq 0} {\rm Tr}(g_{\Pi_v}| V_{(m,n,m)})  X^m Y^{m+n} \sum_{\alpha = 0}^n X^\alpha Y^{-\alpha} \\ &= \frac{1}{\zeta_v(s-z+1)}\sum_{m,\alpha, \beta \geq 0} {\rm Tr}(g_{\Pi_v}| V_{(m,\alpha+\beta,m)})  X^{m+\alpha} Y^{m+\beta},
\end{align*}
where we did the change of variables writing $n= \alpha + \beta$. 
We manipulate \eqref{localform} further. Let $\boldsymbol{\zeta}_v( Z)$ denote $\sum_{k \geq 0 } Z^k$. Using Cauchy product formula, the product between \eqref{localform} and $\boldsymbol{\zeta}_v(XY)$ is
\begin{align*}
   \text{\eqref{localform}} \cdot \boldsymbol{\zeta}_v(XY) &=  \frac{1}{\zeta_v(s-z+1)} \left[\sum_{m \geq 0} \left( \sum_{\alpha, \beta \geq 0} {\rm Tr}(g_{\Pi_v}| V_{(m,\alpha+\beta,m)})  X^{\alpha} Y^{\beta} \right)X^{m}Y^{m}\right] \cdot \sum_{k \geq 0 } X^kY^k \\
   &=  \frac{1}{\zeta_v(s-z+1)} \sum_{m \geq 0} \sum_{a =0}^m \left( \sum_{\alpha, \beta \geq 0} {\rm Tr}(g_{\Pi_v}| V_{(a,\alpha+\beta,a)})  X^{\alpha} Y^{\beta} \right) X^{m}Y^{m} \\
 &=\frac{1}{\zeta_v(s-z+1)}\sum_{m,\alpha, \beta \geq 0} X^{m+\alpha} Y^{m+\beta} \sum_{a = 0}^m{\rm Tr}(g_{\Pi_v}| V_{(a,\alpha+\beta,a)}) \\
    &=\frac{1}{\zeta_v(s-z+1)}\sum_{a,b,\alpha, \beta \geq 0}{\rm Tr}(g_{\Pi_v}| V_{(a,\alpha+\beta,a)}) X^{a+b+\alpha} Y^{a+b+\beta}, 
    \end{align*} 
where in the fourth equality we have made a change of variable writing $m = a +b$. We now further manipulate the sum by multiplying $\text{\eqref{localform}} \cdot \boldsymbol{\zeta}_v(XY)$ with  $\boldsymbol{\zeta}_v( X^2)\boldsymbol{\zeta}_v( Y^2)$. Firstly, write $\boldsymbol{\zeta}_v( X^2) = \sum_{k \geq 0} N_k X^k$, with \[ N_k = \begin{cases} 1 & \text{ if } k \text{ is even,} \\ 0 & \text{ otherwise.} 
\end{cases}\]
Then the Cauchy product formula gives that $\text{\eqref{localform}} \cdot \boldsymbol{\zeta}_v(XY) \cdot \boldsymbol{\zeta}_v( X^2)$ equals 
\begin{align*}
     &= \frac{1}{\zeta_v(s-z+1)}\left [\sum_{\alpha \geq 0} \left ( \sum_{a,b, \beta \geq 0}{\rm Tr}(g_{\Pi_v}| V_{(a,\alpha+\beta,a)}) X^{a+b} Y^{a+b+\beta} \right) X^\alpha \right] \cdot  \sum_{k \geq 0} N_k X^k \\ 
     &= \frac{1}{\zeta_v(s-z+1)} \sum_{\alpha \geq 0} \sum_{k=0}^\alpha \left ( \sum_{a,b, \beta \geq 0} N_k \cdot {\rm Tr}(g_{\Pi_v}| V_{(a,\alpha-k+\beta,a)}) X^{a+b} Y^{a+b+\beta} \right) X^\alpha \\ 
      &= \frac{1}{\zeta_v(s-z+1)}  \sum_{a,b, c, k, \beta \geq 0} N_k \cdot {\rm Tr}(g_{\Pi_v}| V_{(a,c+\beta,a)}) X^{a+b + c + k} Y^{a+b+\beta} \\
      &= \frac{1}{\zeta_v(s-z+1)}  \sum_{a,b, c, d, \beta \geq 0} {\rm Tr}(g_{\Pi_v}| V_{(a,c+\beta,a)}) X^{a+b + c + 2d} Y^{a+b+\beta},
\end{align*}
where in the third equality we have made the change of variables by writing $\alpha = c + k$, and, since $N_k = 1$ for $k$ even and $N_k = 0$ otherwise, in the fourth one we made the change of variables $k=2d$. Proceeding in an analogous way for the multiplication by $\boldsymbol{\zeta}_v( Y^2)$, one gets that 
\[\text{\eqref{localform}} \cdot \boldsymbol{\zeta}_v(XY)  \cdot \boldsymbol{\zeta}_v( X^2) \cdot \boldsymbol{\zeta}_v( Y^2) = \sum_{a,b,c, d,e,f \geq 0} {\rm Tr}(g_{\Pi_v}| V_{(a,c+e,a)}) X^{a+b+c+2d} Y^{a+b+e+2f}, \]
or, equivalently, that
\begin{align}\label{eq:optimalformsplit}
\eqref{localform} = \frac{1}{\zeta_v(2z)\zeta_v(2s)\zeta_v(s+z)\zeta_v(s-z+1)}\sum_{a,b,c, d,e,f \geq 0} {\rm Tr}(g_{\Pi_v}| V_{(a,c+e,a)}) X^{a+b+c+2d} Y^{a+b+e+2f}.
\end{align}

Let $\boldsymbol{L}(Z, \Pi_v, \wedge^2)$ denote $\sum_{k \geq 0 } {\rm Tr}(g_{\Pi_v}|{\rm Sym}^k( V_{(0,1,0)}))Z^k$. Our goal is to compare the right hand side of \eqref{eq:optimalformsplit} with the product
\[\boldsymbol{L}(X,\Pi_v,\wedge^2)\boldsymbol{L}(Y,\Pi_v,\wedge^2).\]

We first need two lemmas.

\begin{lemma}\label{SymmetricFactSL4}
    For any $k \geq 0$, we have the following isomorphism
    \[\mathrm{Sym}^k\left(V_{(0,1,0)}\right) = \bigoplus_{i = 0}^{\lfloor k/2 \rfloor} V_{(0,k-2i,0)}.\]
\end{lemma}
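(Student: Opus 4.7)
The plan is to exploit the exceptional isomorphism $\mathrm{SL}_4(\mathbb{C})/\{\pm 1\} \simeq \mathrm{SO}_6(\mathbb{C})$ and reduce the claim to the classical decomposition of symmetric powers of the standard representation of an orthogonal group. First I would observe that the six-dimensional representation $V := V_{(0,1,0)} = \wedge^2 \mathbb{C}^4$ carries a non-degenerate $\mathrm{SL}_4(\mathbb{C})$-invariant symmetric bilinear form coming from the wedge pairing $\wedge^2 \mathbb{C}^4 \otimes \wedge^2 \mathbb{C}^4 \to \wedge^4 \mathbb{C}^4 \cong \mathbb{C}$. This gives a surjection $\mathrm{SL}_4(\mathbb{C}) \twoheadrightarrow \mathrm{SO}(V) \simeq \mathrm{SO}_6(\mathbb{C})$, and identifies $V$ with the standard representation of $\mathrm{SO}_6(\mathbb{C})$.

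Next I invoke the classical decomposition (the theory of spherical harmonics for the orthogonal group): if $Q \in \mathrm{Sym}^2(V)^{\mathrm{SO}_6}$ denotes the invariant quadratic form and $\mathcal{H}^j(V) \subset \mathrm{Sym}^j(V)$ denotes the subspace of harmonic polynomials (i.e.\ the kernel of the Laplacian dual to $Q$), then
\[
\mathrm{Sym}^k(V) \;=\; \bigoplus_{i=0}^{\lfloor k/2 \rfloor} Q^i \cdot \mathcal{H}^{k-2i}(V),
\]
where each $\mathcal{H}^j(V)$ is irreducible under $\mathrm{SO}_6(\mathbb{C})$ of highest weight $j \varpi_1$ (here $\varpi_1$ is the highest weight of the standard representation). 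Pulling back along $\mathrm{SL}_4(\mathbb{C}) \twoheadrightarrow \mathrm{SO}_6(\mathbb{C})$, each $\mathcal{H}^j(V)$ becomes an irreducible $\mathrm{SL}_4(\mathbb{C})$-representation.

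Finally I would match the highest weights. A highest weight vector in $V = \wedge^2 \mathbb{C}^4$ for $\mathrm{SL}_4(\mathbb{C})$ is $e_1 \wedge e_2$, with weight $\omega_2$ in the notation of the excerpt, so its $j$-th symmetric power $(e_1 \wedge e_2)^j \in \mathcal{H}^j(V)$ (it is isotropic for the wedge pairing, hence automatically harmonic) is a highest weight vector of weight $j\omega_2$. This identifies $\mathcal{H}^{k-2i}(V) \cong V_{(0,k-2i,0)}$ as $\mathrm{SL}_4(\mathbb{C})$-modules, yielding the claimed decomposition.

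The only genuine point to check is that isotropy of $e_1 \wedge e_2$ forces it to lie in $\mathcal{H}^{k-2i}(V)$ (i.e.\ to be annihilated by the Laplacian), which is immediate since the Laplacian applied to $(e_1 \wedge e_2)^{k-2i}$ produces terms proportional to the self-pairing of $e_1 \wedge e_2$; everything else is bookkeeping with fundamental weights of the two root systems. I do not expect any real obstacle, as this is a standard plethysm identity, just requiring care with the $\mathrm{SL}_4/\mathrm{SO}_6$ dictionary.
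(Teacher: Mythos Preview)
Your argument is correct. The route, however, differs from the paper's. The paper simply invokes the short exact sequence
\[
0 \longrightarrow V_{(0,k,0)} \longrightarrow \mathrm{Sym}^k\!\left(V_{(0,1,0)}\right) \longrightarrow \mathrm{Sym}^{k-2}\!\left(V_{(0,1,0)}\right) \longrightarrow 0
\]
(citing \cite[Exercise 15.12]{FultonHarris}) and concludes by induction on $k$. Of course this contraction map \emph{is} the Laplacian dual to the wedge pairing, so at bottom the two arguments rest on the same invariant quadratic form on $\wedge^2\mathbb{C}^4$; but the paper never passes through $\mathrm{SO}_6$ or the harmonic decomposition explicitly. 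Your approach has the virtue of explaining \emph{why} such a contraction exists (it is the orthogonal-group Laplacian in disguise) and identifies the irreducible pieces in one stroke rather than inductively, at the cost of importing the $\mathrm{SL}_4/\mathrm{SO}_6$ dictionary and the theory of spherical harmonics. The paper's version is shorter and self-contained modulo the Fulton--Harris reference.
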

\begin{proof}
For all $k \geq 2$, there is a surjection 
\[\mathrm{Sym}^k\left(V_{(0,1,0)}\right) \longrightarrow \mathrm{Sym}^{k-2}\left(V_{(0,1,0)}\right)\]
with kernel $V_{(0,k,0)}$ (\textit{cf.} \cite[Exercise 15.12]{FultonHarris}). The result follows from applying the surjection recursively.
\end{proof}

\begin{lemma}\label{LRrulesplit}
 For any $n,m \geq 0$, we have \[V_{(0,n,0)} \otimes V_{(0,m,0)} = \bigoplus_{\substack{m_1,m_2,m_3 \geq 0 \\ \sum m_i =m \\ m_2+m_3 \leq n}}\!\!\!\! V_{(m_2,n+m_1-m_2-m_3,m_2)}. \]   
\end{lemma}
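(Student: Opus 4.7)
My plan is to apply the Littlewood--Richardson rule. I would view $V_{(0,n,0)}$ and $V_{(0,m,0)}$ as polynomial $\GL_4(\C)$-representations indexed by the rectangular partitions $\mu=(n,n,0,0)$ and $\nu=(m,m,0,0)$, and identify $\mathrm{SL}_4$-representations whose highest weights differ by a multiple of $(1,1,1,1)$. For each triple $(m_1,m_2,m_3)\in\Z_{\geq 0}^3$ with $m_1+m_2+m_3=m$ and $m_2+m_3\leq n$, I would consider
\[\lambda=(n+m_1+m_2,\,n+m_1,\,m_2+m_3,\,m_3).\]
This partition contains $\mu$, satisfies $|\lambda/\mu|=2m=|\nu|$, and differs from $(n+m_1+m_2-m_3,\,n+m_1-m_3,\,m_2,\,0)$ by the central translation $(m_3,m_3,m_3,m_3)$; hence $V_\lambda\simeq V_{(m_2,\,n+m_1-m_2-m_3,\,m_2)}$ as $\mathrm{SL}_4$-representations.

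Next I would compute $c^{\lambda}_{\mu\nu}=1$ for each such $\lambda$. The skew shape $\lambda/\mu$ has row lengths $(m_1+m_2,\,m_1,\,m_2+m_3,\,m_3)$; rows $1,2$ lie in columns $>n$ and rows $3,4$ in columns $\leq m_2+m_3$, and the hypothesis $m_2+m_3\leq n$ makes these two blocks column-disjoint. Entries of an LR tableau of content $\nu$ lie in $\{1,2\}$, so column-strictness forces row $4$ to be all $2$'s, the first $m_3$ cells of row $3$ to be $1$'s, and the $m_1$ columns of rows $1$ and $2$ lying over one another to read $1/2$. The Yamanouchi condition on the right-to-left, top-to-bottom reading word then forbids any $2$ in row $1$ (since an initial $2$ immediately violates the lattice inequality), so the last $m_2$ cells of row $1$ must be $1$'s, and the last $m_2$ cells of row $3$ must be $2$'s in order to match the content $\nu$. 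A direct prefix check confirms that the resulting unique filling is Yamanouchi.

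Finally, to establish exhaustion, I would show that every $\lambda$ contributing to $V_\mu\otimes V_\nu$ is of the above form. The same forcing steps, applied to an arbitrary $\lambda$ admitting an LR tableau, show that $\lambda_3\leq n$ and $\lambda_1-\lambda_2=\lambda_3-\lambda_4$; setting $m_3:=\lambda_4$, $m_2:=\lambda_3-\lambda_4$, $m_1:=\lambda_2-n$ then recovers the parameterisation, with the inequalities $m_i\geq 0$, $\sum m_i=m$ and $m_2+m_3\leq n$ following respectively from $\lambda\supseteq\mu$, $|\lambda/\mu|=2m$ and $\lambda_3\leq n$. I expect the main obstacle to be this combinatorial bookkeeping---in particular, excluding the spurious weights with $m_2+m_3>n$ (which are valid $\mathrm{SL}_4$-weights but do not actually occur in the decomposition), and handling the edge cases $m_1=0$ or $m_2=0$ where some of the forced rows or blocks degenerate.
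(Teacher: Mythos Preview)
Your proposal is correct and takes essentially the same approach as the paper: both apply the Littlewood--Richardson rule to the rectangular partitions $(n,n)$ and $(m,m)$, arrive at the same shapes $\lambda=(n+m_1+m_2,\,n+m_1,\,m_2+m_3,\,m_3)$, and verify that each occurs with multiplicity one. The paper phrases the argument in the language of ``strict expansions'' (adding the $1$'s, then the $2$'s) rather than LR skew tableaux, and is terser about the forcing steps, but the combinatorics is identical; your write-up is in fact somewhat more explicit about why $\lambda_3\le n$ and $\lambda_1-\lambda_2=\lambda_3-\lambda_4$ are forced, which the paper leaves implicit.
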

\begin{proof}
Following the discussion in \cite[\S 25.3]{FultonHarris}, we apply the Littlewood--Richardson rule as in \cite[(A.8)]{FultonHarris} (\textit{cf}. \cite[I.9]{macdonald} for a proof of the formula). If we let $D_t$ denote the Young diagram with rows of size $(t,t)$, we have 
\[V_{(0,n,0)} \otimes V_{(0,m,0)} = \oplus_{\lambda} N_{n,m,\lambda} V_\lambda,\]
where $N_{n,m,\lambda}$ is the number of strict expansions of $D_n$ by $D_m$ of shape the Young diagram of $V_\lambda$. In our case $N_{n,m,\lambda}$ is either zero or one; hence we are left to determine the set of strict expansions of $D_n$ by $D_m$.  Recall that an expansion of $D_n$ by $D_m$ is a Young diagram  obtained by first adding $m$ boxes and putting the integer $1$ in each of these so that no two 1's are in the same column and then adding other $m$ boxes with a 2 so that no two 2's are in the same column. Such expansion is called strict if when reading from right to left, starting with the top row and working down, for any $1 \leq t \leq 2m$ the number of 1's up to position $t$ is greater or equal to the number of 2's up to position $t$. It follows that the only way we can add the first $m$ boxes is by getting the Young diagram with rows of size $(n+\ell_1,n,\ell_2)$ such that $\ell_1 + \ell_2 = m$ and $\ell_2 \leq n$. If we let $(m_1,m_2,m_3)$ be any partition of $m$, the $m$ boxes labeled by $2$ can be added by appending $m_1$ boxes to the second row, $m_2$ to the third, and $m_3$ to the fourth so that $\ell_1 \geq m_1 + m_2$, $\ell_2 \geq m_3$, $\ell_2 + m_2 \leq n$. This implies that $\ell_1 = m_1 + m_2$ and $\ell_2 = m_3$,  hence any strict expansion of $D_n$ by $D_m$ is determined by choosing a partition $(m_1,m_2,m_3)$ of $m$ such that $m_2 + m_3 \leq n$. The irreducible representation of $\mathrm{SL}_4(\C)$ corresponding to the strict expansion of $D_n$ by $D_m$ associated to $(m_1,m_2,m_3)$ is $V_{(m_2,n+m_1-m_2-m_3,m_2)}$. This proves the formula.
\end{proof}

\begin{theorem}\label{SplitTheorem}
We have that \[ I(\varphi_v,f_v, s  , z ) = \frac{L(s,\Pi_v,\wedge^2)L(z,\Pi_v,\wedge^2)}{\zeta_v(2z)\zeta_v(2s)\zeta_v(s+z)\zeta_v(s-z+1)}. \]
In particular, if we normalize $f_v^*(g,s,z):= \zeta_v(2z)\zeta_v(2s)\zeta_v(s+z)\zeta_v(s-z+1) f_v(g,s,z)$, \[ I(\varphi_v,f_v^*, s  , z ) =L(s,\Pi_v,\wedge^2)L(z,\Pi_v,\wedge^2). \]
\end{theorem}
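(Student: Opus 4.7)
The computation presented in \eqref{eq:optimalformsplit} reduces the theorem to the purely combinatorial identity
\[
\boldsymbol{L}(X,\Pi_v,\wedge^2)\,\boldsymbol{L}(Y,\Pi_v,\wedge^2) \;=\; \sum_{a,b,c,d,e,f \geq 0} \mathrm{Tr}\bigl(s_{\Pi_v}\,\big|\,V_{(a,c+e,a)}\bigr)\, X^{a+b+c+2d}\, Y^{a+b+e+2f},
\]
using the standard identity $L(s,\Pi_v,\wedge^2) = \sum_{k \geq 0} \mathrm{Tr}\bigl(s_{\Pi_v}\,\big|\,\mathrm{Sym}^k V_{(0,1,0)}\bigr)\, q_v^{-ks} = \boldsymbol{L}(X,\Pi_v,\wedge^2)$ (and analogously for $Y$). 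The plan is to expand the left-hand side using Lemma \ref{SymmetricFactSL4}, multiply, and then match coefficients with the right-hand side using Lemma \ref{LRrulesplit}.

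First, Lemma \ref{SymmetricFactSL4} together with the reindexing $n = p + 2d$ gives $\boldsymbol{L}(X,\Pi_v,\wedge^2) = \sum_{p, d \geq 0} \mathrm{Tr}\bigl(s_{\Pi_v}\,\big|\,V_{(0,p,0)}\bigr)\, X^{p+2d}$, and similarly for $Y$. Taking the product and using multiplicativity of characters on tensor products, the identity above is equivalent, after cancelling the common factor $\sum_{d,f \geq 0} X^{2d} Y^{2f}$ from both sides, to
\[
\sum_{p, q \geq 0} \mathrm{Tr}\bigl(s_{\Pi_v}\,\big|\,V_{(0,p,0)} \otimes V_{(0,q,0)}\bigr)\, X^p Y^q \;=\; \sum_{a,b,c,e \geq 0} \mathrm{Tr}\bigl(s_{\Pi_v}\,\big|\,V_{(a,c+e,a)}\bigr)\, X^{a+b+c} Y^{a+b+e}.
\]
Next, I apply Lemma \ref{LRrulesplit} to the left-hand side. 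By the symmetry of the tensor product, I may assume $p \geq q$; parametrising the Littlewood--Richardson data by $(m_1, m_2, m_3) = (q - a - s,\, a,\, s)$ with $a, s \geq 0$ and $a + s \leq q$, and setting $t := a + s$, yields the compact form
\[V_{(0,p,0)} \otimes V_{(0,q,0)} \;=\; \bigoplus_{t = 0}^{\min(p,q)} \bigoplus_{a = 0}^{t} V_{(a,\,p + q - 2t,\,a)}.\]

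Finally, I reindex the right-hand side to match. For each fixed $(p,q)$, setting $t = a + b$ forces $c = p - t$ and $e = q - t$; the positivity constraints $b, c, e \geq 0$ translate into $a \leq t \leq \min(p, q)$, and for each valid pair $(t, a)$ the quadruple $(a, b, c, e)$ is unique. The coefficient of $X^p Y^q$ on the right is therefore $\sum_{t = 0}^{\min(p,q)} \sum_{a = 0}^{t} \mathrm{Tr}\bigl(V_{(a,\,p+q-2t,\,a)}\bigr)$, matching the Littlewood--Richardson decomposition established above; the normalisation statement for $f_v^*$ is then immediate from the definition. The main potential obstacle is purely organisational: the six summation indices $(a,b,c,d,e,f)$ must be seen to split into three functional roles --- $(d, f)$ encoding the Lemma \ref{SymmetricFactSL4} reduction from $\mathrm{Sym}^k$ to $V_{(0,k,0)}$, the pair $(a, b)$ controlling the LR contraction parameter $t = a + b$, and $(c, e)$ being determined as the remainders --- with all three matchings producing exactly multiplicity one to align with the LR count.
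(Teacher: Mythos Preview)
Your proof is correct and follows essentially the same approach as the paper's: both arguments reduce to \eqref{eq:optimalformsplit}, expand $\boldsymbol{L}(X,\Pi_v,\wedge^2)\boldsymbol{L}(Y,\Pi_v,\wedge^2)$ via Lemma~\ref{SymmetricFactSL4}, apply the Littlewood--Richardson decomposition of Lemma~\ref{LRrulesplit}, and match coefficients by a change of variables. Your organisation is slightly cleaner in that you first cancel the common factor $\sum_{d,f}X^{2d}Y^{2f}$ and rewrite the LR decomposition in the compact form $\bigoplus_{t=0}^{\min(p,q)}\bigoplus_{a=0}^{t}V_{(a,p+q-2t,a)}$, whereas the paper keeps all six summation indices throughout and matches them directly with $(a,b,c,d,e,f)$ in one step; but the underlying combinatorics is identical.
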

    
\begin{proof}
    By Lemma \ref{SymmetricFactSL4}, the coefficient of $X^m Y^n$ of $\boldsymbol{L}(X,\Pi_v,\wedge^2)\boldsymbol{L}(Y,\Pi_v,\wedge^2)$ is given by 
    \[\sum_{i=0}^{\lfloor \frac{m}{2}\rfloor} \sum_{j=0}^{\lfloor \frac{n}{2}\rfloor} {\rm Tr}(g_{\Pi_v}|V_{(0,m-2i,0)})\mathrm{Tr}(g_{\Pi_v}|V_{(0,n-2j,0)}).  \]
By Lemma \ref{LRrulesplit}, we thus have that
\[\boldsymbol{L}(X,\Pi_v,\wedge^2)\boldsymbol{L}(Y,\Pi_v,\wedge^2) = \sum_{m,n \geq 0} X^mY^n\sum_{i=0}^{\lfloor \frac{m}{2}\rfloor} \sum_{j=0}^{\lfloor \frac{n}{2}\rfloor}  \sum_{\substack{t_1,t_2,t_3 \geq 0 \\ \sum t_i =n-2j \\ t_2+t_3 \leq m - 2i}} {\rm Tr}(g_{\Pi_v}|V_{(t_2,m-2i - t_2 - t_3 +t_1,t_2)}).  \]
Do a change of variables by  $m = r_1 + t_2 + t_3 + 2i$ and $n = t_1 +t_2 +t_3 + 2j $ to get that
\[\boldsymbol{L}(X,\Pi_v,\wedge^2)\boldsymbol{L}(Y,\Pi_v,\wedge^2) = \sum_{r_1,t_1,t_2,t_3,i,j \geq 0} {\rm Tr}(g_{\Pi_v}|V_{(t_2,r_1 + t_1,t_2)}) X^{r_1 + t_2 + t_3 + 2i}Y^{t_1 +t_2 +t_3 + 2j}.  \]
This equals the infinite sum in the right hand side of \eqref{eq:optimalformsplit}. The theorem follows. 
\end{proof}

\subsection{Unramified computation at inert primes}

Let $v \not \in \Sigma$ be a place   which is inert over $E$ and let us assume that $\Pi_v$ is the unique unramified subquotient of $\mathrm{Ind}_{B_{\G}(F_v)}^{\G(F_v)}\chi$ with $\chi$ an unramified character of $T_{\G}(F_v)$. Denote by $\pi_v$ the unique unramified subquotient of $\mathrm{Ind}_{B_{\H}(F_v)}^{\H(F_v)}\chi_{|_{T_{\H}}}$ and by $g_{\pi_v}$ its Frobenius conjugacy class.  As $\Pi_v$ has trivial central character, we can write \[W_v(m(\varpi_v^m,\varpi_v^n)) = W_v({\rm diag}(\varpi_v^{2m+n},\varpi_v^{m+n},\varpi_v^m,1)). \] As the relative root system of $\G_{/F_v}$ is that of $\H_{/F_v}$, the Casselman--Shalika formula for the Whittaker coefficient $W_v$ of $\Pi_v$ gives that (\textit{cf}. \cite[Proposition 5.2.1]{GanHundley})
\begin{align*}
    W_v({\rm diag}(\varpi_v^{2m+n},\varpi_v^{m+n},\varpi_v^m,1)) &=  \delta_{B_{\G}}^{1/2}({\rm diag}(\varpi_v^{2m+n},\varpi_v^{m+n},\varpi_v^m,1)) {\rm Tr}(g_{\pi_v}| W_{(m,n)}) \\ 
    &= q_v^{-3m-2n} {\rm Tr}(g_{\pi_v}| W_{(m,n)}) 
\end{align*}
where $W_{(m,n)}$ is the irreducible representation of ${\rm Spin}_5(\C)$ with highest weight $m \omega_1 + n \omega_2$. Here $\omega_1$ and $\omega_2$ are the fundamental weights of ${\rm Spin}_5(\C)$ and correspond to the five-dimensional standard orthogonal representation and to the four-dimensional spin representation, respectively. Hence, if we let $X:= q_v^{ -s}$ and  $Y:= q_v^{ -z}$, \eqref{localform} is 
\[\frac{1}{\zeta_v(s-z+1)}\sum_{m,n \geq 0} {\rm Tr}(g_{\pi_v}| W_{(m,n)})  X^m Y^{n+m} \frac{1 - X^{n+1} Y^{-n-1}}{1 - X Y^{-1}} . \]
After multiplying and dividing by $\boldsymbol{\zeta}(XY)$, we have
\begin{align}\label{localforminert}
    \text{\eqref{localform}}&=\frac{1}{\zeta_v(z+s)\zeta_v(s-z+1)}\sum_{a,b,\alpha, \beta \geq 0}{\rm Tr}(g_{\pi_v}| W_{(a,\alpha+\beta)}) X^{a+b+\alpha} Y^{a+b+\beta}, 
    \end{align}
where we have written $m = a+b$ and $n=\alpha + \beta$. Let $\boldsymbol{L}(Z, \pi_v, {\rm Spin})$ denote $\sum_{k \geq 0 } {\rm Tr}(g_{\pi_v}|{\rm Sym}^k( W_{(0,1)}))Z^k$. We now compare the right hand side of \eqref{localforminert} with the product
\[\boldsymbol{L}(X, \pi_v, {\rm Spin})\boldsymbol{L}(Y, \pi_v, {\rm Spin}),\]
which we calculate as follows. Making use of the exceptional isomorphism $\mathrm{Spin}_{5}(\C)\xrightarrow{\simeq}\mathrm{Sp}_4(\C)$, the explicit branching law for
    \[\mathrm{Spin}_{5}(\C)\xrightarrow{\simeq}\mathrm{Sp}_4(\C)\hookrightarrow \SL_4(\C)\]
reads as (\textit{cf}. \cite[p. 739]{BFG})
\begin{align}\label{branching}
    V_{(u,v,w)} = \bigoplus_{s=0}^v \bigoplus_{t=0}^{{\rm min}(w,u)} W_{(s+t,u+w-2t)},
\end{align} 
where $V_{(u,v,w)}$ denotes the irreducible representation of $\SL_4(\C)$ as in \S \ref{unrcompsplit}. As a special case, we see that ${\rm Sym}^n(V_{(1,0,0)})= V_{(n,0,0)} = W_{(0,n)} = {\rm Sym}^n(W_{(0,1)})$ for every $n \geq 0$.

\begin{lemma}\label{LRruleinert}
For any $n,m \geq 0$, we have \[W_{(0,n)} \otimes W_{(0,m)} = \bigoplus_{\substack{m_1,m_2 \geq 0 \\ m_1 +m_2 =m \\ m_2 \leq n}} \bigoplus_{s=0}^{m_2} W_{(s,n+m_1-m_2)}. \] 
\end{lemma}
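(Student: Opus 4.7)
The plan is to run the same two-step argument used in Lemma \ref{LRrulesplit}: first decompose the tensor product $V_{(n,0,0)} \otimes V_{(m,0,0)}$ as an $\SL_4(\C)$-representation, and then restrict to $\mathrm{Spin}_5(\C) \simeq \Sp_4(\C)$ via the branching formula \eqref{branching}. This reduction is legitimate since the paragraph following \eqref{branching} identifies $W_{(0,n)}$ with the restriction of $V_{(n,0,0)}$ to $\Sp_4(\C)$, so $W_{(0,n)} \otimes W_{(0,m)}$ is the restriction of $V_{(n,0,0)} \otimes V_{(m,0,0)}$ to $\Sp_4(\C)$.

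For the first step, I will apply the Pieri rule to the tensor product of two single-row Schur functors (the special case of the Littlewood--Richardson computation of Lemma \ref{LRrulesplit} in which one factor is a symmetric power). The only strict expansions of the single-row diagram of length $n$ by $m$ boxes are the two-row partitions $(n+m-k,k,0,0)$ with $0 \leq k \leq \min(n,m)$, where $k \leq n$ comes from the column condition and $k \leq m$ from having only $m$ boxes to add. Translating the $\GL_4$-partition $(a,b,0,0)$ into the fundamental-weight label $V_{(a-b,b,0)}$ used in \S \ref{unrcompsplit}, this produces
\[V_{(n,0,0)} \otimes V_{(m,0,0)} = \bigoplus_{k=0}^{\min(n,m)} V_{(n+m-2k,\, k,\, 0)}.\]

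For the second step, I will specialize \eqref{branching} to $(u,v,w) = (n+m-2k, k, 0)$. Since $w=0$, only $t=0$ contributes and the branching collapses to $\bigoplus_{s=0}^k W_{(s,\, n+m-2k)}$. Combining the two steps yields
\[W_{(0,n)} \otimes W_{(0,m)} = \bigoplus_{k=0}^{\min(n,m)}\; \bigoplus_{s=0}^{k} W_{(s,\, n+m-2k)}.\]
The change of variables $m_2 = k$, $m_1 = m-k$ then matches this with the statement: the constraints $m_1, m_2 \geq 0$, $m_1 + m_2 = m$, $m_2 \leq n$ correspond exactly to $0 \leq k \leq \min(n,m)$, and $n + m_1 - m_2 = n + m - 2k$.

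The only delicate point is bookkeeping: one must switch carefully between the $\GL_4$-partition notation used in the Pieri computation and the fundamental-weight convention $V_{(u,v,w)}$ of \S \ref{unrcompsplit} and \eqref{branching}, and verify that the two range conditions arising from Pieri combine into $k \leq \min(n,m)$. No representation-theoretic input beyond Pieri's rule and \eqref{branching} is needed.
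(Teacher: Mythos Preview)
Your proof is correct and follows essentially the same approach as the paper: first decompose $V_{(n,0,0)}\otimes V_{(m,0,0)}$ via the Littlewood--Richardson rule (of which your Pieri step is the relevant special case), then apply the branching formula \eqref{branching} with $w=0$, and finally match the indexing via $m_2=k$, $m_1=m-k$. The paper's argument is identical up to this change of variables and the terminology used for the tensor-product decomposition.
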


\begin{proof}
We start by computing the tensor product of $V_{(n,0,0)}$ with $V_{(m,0,0)}$ by the Littlewood-Richardson rule. Let $Y_t$ denote the Young diagram with a row of size $t$. Each strict expansion of $Y_n$ by $Y_m$ correspond to a pair of positive integers $(m_1,m_2)$ such that $m_1 + m_2 = m $ and $m_2 \leq n$ and it gives the Young diagram of $V_{(m_1+n-m_2,m_2,0)}$. Hence, we have 
\[V_{(n,0,0)} \otimes V_{(m,0,0)} = \bigoplus_{\substack{m_1,m_2 \geq 0 \\ m_1 +m_2 =m \\ m_2 \leq n}}\!\!\!\! V_{(n+m_1-m_2,m_2,0)}. \]
This and \eqref{branching} give the desired formula. 
\end{proof}

\begin{theorem}\label{inertTheorem}
We have that \[ I(\varphi_v,f_v, s  , z ) = \frac{L(s,\Pi_v,\wedge^2)L(z,\Pi_v,\wedge^2)}{\zeta_v(2z)\zeta_v(2s)\zeta_v(s+z)\zeta_v(s-z+1)}. \]
In particular, if we normalize $f_v^*(g,s,z):= \zeta_v(2z)\zeta_v(2s)\zeta_v(s+z)\zeta_v(s-z+1) f_v(g,s,z)$, \[ I(\varphi_v,f_v^*, s  , z ) =L(s,\Pi_v,\wedge^2)L(z,\Pi_v,\wedge^2). \]
\end{theorem}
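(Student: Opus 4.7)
The strategy mirrors the proof of Theorem \ref{SplitTheorem}: I would compute the generating series $\boldsymbol{L}(X, \pi_v, {\rm Spin})\boldsymbol{L}(Y, \pi_v, {\rm Spin})$ explicitly using Lemma \ref{LRruleinert}, and show that after a change of variables it matches the sum appearing on the right-hand side of \eqref{localforminert} up to the expected zeta factors. Finally, I would convert from the spin to the exterior square $L$-function using \eqref{extTOspin}.

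First, I note that under the exceptional isomorphism $\mathrm{Spin}_5(\C) \simeq \Sp_4(\C)$, the spin representation $W_{(0,1)}$ corresponds to the standard 4-dimensional representation of $\Sp_4(\C)$. Since symmetric powers of the standard representation of $\Sp_{2n}$ are irreducible, we have $\mathrm{Sym}^k(W_{(0,1)}) = W_{(0,k)}$ for every $k \geq 0$. Consequently
\[\boldsymbol{L}(X, \pi_v, {\rm Spin})\boldsymbol{L}(Y, \pi_v, {\rm Spin}) = \sum_{n,m \geq 0} \mathrm{Tr}(s_{\pi_v} \mid W_{(0,n)} \otimes W_{(0,m)}) X^n Y^m,\]
and applying Lemma \ref{LRruleinert} this becomes
\[\sum_{n,m \geq 0}\; \sum_{\substack{m_1, m_2 \geq 0 \\ m_1 + m_2 = m,\; m_2 \leq n}}\; \sum_{s=0}^{m_2} \mathrm{Tr}(s_{\pi_v} \mid W_{(s,\, n + m_1 - m_2)}) X^n Y^m.\]

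Next, I perform the change of variables $u = n - m_2 \geq 0$, $v = m_1 = m - m_2 \geq 0$, and $m_2 = s + t$ with $t \geq 0$; then $s, u, v, t$ range freely over non-negative integers, $n + m_1 - m_2 = u + v$, and $X^n Y^m = X^{u + s + t} Y^{v + s + t}$. The product becomes
\[\sum_{s, u, v, t \geq 0} \mathrm{Tr}(s_{\pi_v} \mid W_{(s,\, u+v)})\, X^{s + t + u} Y^{s + t + v},\]
which is exactly the sum appearing on the right-hand side of \eqref{localforminert} under the identification $a = s$, $b = t$, $\alpha = u$, $\beta = v$. Therefore
\[I(\varphi_v, f_v, s, z) = \frac{\boldsymbol{L}(X, \pi_v, {\rm Spin})\boldsymbol{L}(Y, \pi_v, {\rm Spin})}{\zeta_v(s+z)\zeta_v(s-z+1)} = \frac{L(s, \pi_v, {\rm Spin})L(z, \pi_v, {\rm Spin})}{\zeta_v(s+z)\zeta_v(s-z+1)}.\]
Invoking \eqref{extTOspin} to substitute $L(s, \pi_v, {\rm Spin}) = L(s, \Pi_v, \wedge^2)/\zeta_v(2s)$ and similarly at $z$ yields the first identity of the theorem, and the normalized version is immediate from the definition of $f_v^\ast$.

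The main obstacle is the combinatorial bookkeeping in the change of variables above: one must verify that the substitution $(n, m, m_1, m_2, s) \mapsto (s, u, v, t)$ is a bijection onto $\Z_{\geq 0}^4$ indexing the target sum, so that no constraints are lost and the infinite series truly match term by term. Once this is checked, the remainder is the formal application of Lemma \ref{LRruleinert} and of the branching identity \eqref{extTOspin}.
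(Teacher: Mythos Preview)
Your proposal is correct and follows essentially the same approach as the paper: both expand $\boldsymbol{L}(X,\pi_v,{\rm Spin})\boldsymbol{L}(Y,\pi_v,{\rm Spin})$ using ${\rm Sym}^k(W_{(0,1)})=W_{(0,k)}$, apply Lemma \ref{LRruleinert}, perform the same change of variables (up to relabeling) to recover the sum in \eqref{localforminert}, and finish with \eqref{extTOspin}. The bijection you flag as the main obstacle is straightforward since each new variable is a nonnegative difference of old ones and the inverse map is visibly well-defined.
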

    
\begin{proof}
    By Lemma \ref{LRruleinert}, the coefficient of $X^m Y^n$ of $\boldsymbol{L}(X,\pi_v,{\rm Spin})\boldsymbol{L}(Y,\pi_v,{\rm Spin})$ is given by 
    \[\sum_{\substack{m_1,m_2 \geq 0 \\ m_1 + m_2 = m \\ m_2 \leq n}} \sum_{s=0}^{m_2} {\rm Tr}(g_{\pi_v}|W_{(s, n + m_1 - m_2)}).  \]

Write $m_2 = s + r$, $n = t + s +r$, and $m= m_1 +s +r$, then this implies that \[\boldsymbol{L}(X,\pi_v,{\rm Spin})\boldsymbol{L}(Y,\pi_v,{\rm Spin}) = \sum_{m_1, s, r ,t \geq 0 } {\rm Tr}(g_{\pi_v}|W_{(s, m_1 +t)}) X^{s+r+m_1}Y^{s+r+t}. \] 
This is exactly the infinite sum in the right hand side of \eqref{localforminert}.
Then the theorem follows from \eqref{extTOspin}, which says that, for $s' \in \{s,z\}$  \[ L(s',\Pi_v,\wedge^2) = \zeta_v(2s') L(s',\pi_v,{\rm Spin}).\]
\end{proof}

\subsection{Ramified computations}

Let $\Sigma_\infty \subseteq \Sigma$ denote the set of archimedean places of $F$.

\begin{lemma}\label{LemmaonRamified}\leavevmode
\begin{enumerate}
    \item For any place $v$, there exists $\epsilon > 0$ such that  $I(\varphi_v,f_v,s,z)$ converges absolutely for ${\rm Re}(z)>1-\epsilon$ and ${\rm Re}(s)>{\rm Re}(z) -4$. Moreover, $\prod_{v \in \Sigma} I(\varphi_v,f_v,s,z)$
    has meromorphic continuation in $s$ and $z$.
    \item For any finite place $v$, there exist $\varphi_v$ and $f_v(g,s,z)$ such that $I(\varphi_v,f_v,s,z)$ is a non-zero constant independent of $s$ and $z$. 
    \item For any place $v \in \Sigma_\infty$, given $s_0, z_0 \in \C$, one can choose $\varphi_v$ and $f_v(g,s,z)$ such that $I(\varphi_v,f_v,s,z)$ is non-zero at $(s_0,z_0)$.
\end{enumerate}
\end{lemma}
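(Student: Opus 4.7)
The plan is to reduce each item to local analysis in the Iwasawa coordinates used for the unramified computation and then argue place by place. In each case, one exploits the fact that after Iwasawa decomposition $\H(F_v)=B_\H(F_v)K_v$, the integral $I(\varphi_v,f_v,s,z)$ splits into a double integral over $a,d\in F_v^\times$ against $|a|^{s+z-3}|d|^{z-2}W_{\varphi_v}(m(a,d))$ times an integral over $K_v$ and the inner $b$-integral of $f_v(w\,n(b)k,s,z)\psi_v(bd)$ that was explicitly analysed in the unramified calculation.

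For part (1), the $b$-integral grows like $|b|^{z-s-1}$ as $|b|\to\infty$, and so converges provided $\mathrm{Re}(s-z)>-1$ (up to an $\epsilon$ in the non-unramified case). Standard gauge estimates control $W_{\varphi_v}(m(a,d))$ on the torus: rapid decay for large $|a|$, $|d|$ (at archimedean places this is Jacquet's estimate, at non-archimedean the Whittaker function has support in a half-lattice), and polynomial growth in the exponents of $\Pi_v$ for small $|a|$, $|d|$. Combining these with the factor $|a|^{s+z-3}|d|^{z-2}$ yields absolute convergence in the claimed region. Meromorphic continuation of $\prod_{v\in\Sigma}I(\varphi_v,f_v,s,z)$ is then automatic from Theorem \ref{mainunramified}: the product equals the global integral $I(\varphi,f,s,z)$ (meromorphic because the Borel Eisenstein series is) divided by the displayed ratio of $L$-functions and zeta factors (also meromorphic).

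For part (2), apply the standard Bruhat-cell bump construction. Choose an open compact $K_v'\subseteq \H(\mathcal{O}_v)$ small enough that $\varphi_v$ is $K_v'$-fixed and $wK_v'w^{-1}\cap B_\H(F_v)=\{1\}$. Then the prescription $b\,w\,k\mapsto \chi_{s,z}(b)\delta_{B_\H}^{1/2}(b)$ for $b\in B_\H(F_v)$, $k\in K_v'$, extended by zero off the open cell, is a well-defined standard section $f_v$ of $I_{B_\H}(s,z)$. For this choice, $f_v(wg,s,z)\cdot W_{\varphi_v}(g)$ is supported on $g\in N_{Q_\H}(F_v)Z_\H(F_v)K_v'$ and is independent of $(s,z)$ there, so $I(\varphi_v,f_v,s,z)$ collapses to ${\rm vol}(K_v')\cdot W_{\varphi_v}(1)$. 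Since $\Pi$ is globally generic, $\varphi_v$ can be chosen with $W_{\varphi_v}(1)\ne 0$, producing the desired non-zero constant.

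For part (3), the same strategy is carried over to the archimedean setting with a smooth replacement of the characteristic function. Given $(s_0,z_0)$, pick a smooth non-negative bump $\phi$ on $K_v$ supported in a neighborhood $K_v'$ of $1$ small enough that $wK_v'w^{-1}\cap B_\H(F_v)=\{1\}$, and extend to a smooth section $f_v\in I_{B_\H}(s,z)$ by $f_v(b\,w\,k,s,z)=\chi_{s,z}(b)\delta_{B_\H}^{1/2}(b)\phi(k)$ for $k\in K_v$. The resulting $I(\varphi_v,f_v,s,z)$ is holomorphic in $(s,z)$ and its value at $(s_0,z_0)$ is a positive multiple of $\int_{K_v}\phi(k)W_{\varphi_v}(k)\,dk$; any $\varphi_v$ whose Whittaker function is non-vanishing on the support of $\phi$ makes this non-zero. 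The main obstacle is producing a section that is smooth on all of $\H(F_v)$, not just on the open Bruhat cell; this is handled by the standard cutoff-function trick, which contributes only corrections vanishing in a small neighborhood of the support of $\phi$ and hence does not affect the value of the integral at $(s_0,z_0)$.
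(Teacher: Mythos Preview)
Your argument for part (1) does not establish the stated region of convergence. You only extract $\mathrm{Re}(s-z)>-1$ from the $b$-integral (and even this is off: absolute convergence of $\int_{|b|>1}|b|^{\mathrm{Re}(z-s)-1}\,db$ needs $\mathrm{Re}(s-z)>0$, while with the oscillating $\psi_v(bd)$ the integral already makes sense for all $s,z$), and then the torus integration in $a,d$ with ``gauge estimates'' yields at best a region of the shape $\mathrm{Re}(s+z)\gg 0$, $\mathrm{Re}(z)\gg 0$. Neither the bound $\mathrm{Re}(z)>1-\epsilon$ nor the improvement to $\mathrm{Re}(s)>\mathrm{Re}(z)-4$ is visible from your sketch, and the first of these is exactly what is needed later to take the residue at $z=1$. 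The paper obtains both by a genuinely different mechanism: it realizes $f_v$ as a product $f_{P_\H}(\Phi_P,\,\cdot\,,s-z+1)\,f_{Q_\H}(\Phi_Q,\,\cdot\,,z)$ of Godement-type sections, so that after a change of variables the inner integral in $z$ becomes precisely the Furusawa--Morimoto local integral, whose convergence for $\mathrm{Re}(z)>1-\epsilon$ is already known; the remaining $s$-dependence sits in an integral $\int|r|^{-s-2}W_{\varphi_v}(\iota'(r)\cdots)\,dr$, which is then tamed by writing $\varphi_v=\sum_j\xi_j\!*\!\varphi_{v,j}$ via Dixmier--Malliavin so that the relevant integrand picks up a compactly supported factor $\hat\xi_j(r^{-1})$, producing the $-4$. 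Your meromorphic continuation argument is the same as the paper's.

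Your argument for part (2) has a genuine error. The element $w$ in the unfolded integral is \emph{not} the long Weyl element but the simple reflection inside the Levi $M_{Q_\H}$; it normalizes $N_{Q_\H}$ and sends $N_\alpha$ to $\overline{N}_\alpha$. Hence $w^{-1}B_\H w=T_\H N_{Q_\H}\overline{N}_\alpha$, and the image of $w^{-1}B_\H w\,K_v'$ in $N_{Q_\H}Z_\H\backslash\H$ is $(Z_\H\backslash T_\H)\cdot\overline{N}_\alpha\cdot K_v'$, which is not compact. So the support of $g\mapsto f_v(wg,s,z)$ is not contained in $N_{Q_\H}Z_\H K_v'$ and the integral does not collapse to $\mathrm{vol}(K_v')W_{\varphi_v}(1)$; the torus and $\overline{N}_\alpha$ directions survive and reintroduce the $(s,z)$-dependence you were trying to kill. (Incidentally, $I_{B_\H}(s,z)$ is defined without the $\delta_{B_\H}^{1/2}$, so your prescription should read $b\,w\,k\mapsto\chi_{s,z}(b)$.) The same issue undermines your part (3). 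The paper instead handles (2) and (3) by the Dixmier--Malliavin argument as in \cite[Lemma~9.1]{Gan-SavinExceptionalSW}, which does not rely on a big-cell collapse.
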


\begin{proof}
We will reduce our first claim to the study in \cite[\S 3.1]{FurusawaMorimoto} of the local integrals \cite[(3.1)]{FurusawaMorimoto}. To do so, we choose the section defining the Borel Eisenstein series as follows. Let $v$ be any place of $F$. Let $W_4$ be the standard representation of $\H(F_v)$, with basis $\{e_1,e_2,f_2,f_1\}$ and let $W_{1,1}$ be the five-dimensional representation of $\H(F_v)$ of highest weight $(1,1)$. It sits inside the exterior square of $W_4$ and in particular contains the vector $f_2 \wedge f_1$.  For a Schwartz--Bruhat function $\Phi_{P}$ on $W_{1,1}$, define the function on $\H(F_v)$ given by
\[ f_{P_\H} (\Phi_{P}, g, s-z+1) := | \mu(g) |^{ s-z+1} \int_{F^\times_v} \Phi_{P}( t f_2 \wedge f_1 g) |t|^{2(s-z+1)} dt \in I_{P_\H(F_v)}^{\H(F_v)}( \delta_{P_\H}^{\frac{1}{3}(s-z+1)}). \]
Similarly, for a Schwartz--Bruhat function $\Phi_{Q}$ on $M_{1\times 4}(F_v)$, where we let $\H(F_v)$ acts on it by right multiplication, define the function on $\H(F_v)$ given by
\[ f_{Q_\H} (\Phi_{Q}, g, z) := | \mu(g) |^{z} \int_{F^\times_v} \Phi_{Q}( t f_1 g) |t|^{2z} dt \in I_{Q_\H(F_v)}^{\H(F_v)}( \delta_{Q_\H}^{\frac{1}{2}z}). \]
Here $I_{P_\H(F_v)}^{\H(F_v)}( \delta_{P_\H}^{\frac{1}{3}(s-z+1)})$ and $ I_{Q_\H(F_v)}^{\H(F_v)}( \delta_{Q_\H}^{\frac{1}{2}z})$ are not normalized inductions. It's easy to see that $f_v(g, s, z):= f_{P_\H} (\Phi_{P}, g, s-z+1)f_{Q_\H} (\Phi_{Q}, g, z) \in I_{B_\H(F_v)}(s,z)$. Call $\tilde{P}$ the subgroup of $P_{\H}$ given by $\GL_2 N_{P_\H}$, with $\GL_2 \hookrightarrow M_{P_\H}$ via $h \mapsto h^*:= \left(\begin{smallmatrix}h& \\ & {\rm det}(h) J_2 {}^th^{-1} J_2\end{smallmatrix}\right)$. With this choice of section and collapsing the sum with respect to $\tilde{P}_w(F_v):=w^{-1} \tilde{P}(F_v) w$, the local integral  
\begin{align*}
I(\varphi_v, f_v, s, z) &= \int_{\tilde{P}_w(F_v)\setminus \H(F_v)} \int_{N_{Q_\H}(F_v)Z_\H(F_v)\setminus\tilde{P}_w(F_v)} \!\!\!\!\!\!f_v(w p_w h , s  , z)   W_{\varphi_v}(p_w h) d p_w dh \\ 
&= \int_{\cdots} \int_{N_{Q_\H}(F_v)Z_\H(F_v)\setminus \tilde{P}_w(F_v)} \!\!\!\!\!\!f_{P_\H} (\Phi_{P}, pwh, s-z+1)f_{Q_\H} (\Phi_{Q}, p_w h, z)W_{\varphi_v}(p_w h) dp_w dh
\\ 
&= \int_{\cdots} f_{P_\H} (\Phi_{P}, wh, s-z+1) \int_{N_{Q_\H}(F_v)Z_\H(F_v)\setminus\tilde{P}_w(F_v)} \!\!\!\!\!\!f_{Q_\H} (\Phi_{Q}, p_w h, z)W_{\varphi_v}(p_w h) dp_w dh,
\end{align*}
where we have written $p_w = w^{-1} p w$ for $p \in \tilde{P}(F_v)$, we have used that $w$ acts trivially on $f_{Q_\H}$  and in the last equality that the modulus character of the Siegel parabolic is trivial on $\tilde{P}(F_v)$. We now look at the inner integral. The domain $N_{Q_\H}(F_v)Z_\H(F_v)\setminus  \tilde{P}_w(F_v)$ is isomorphic to $\left(Z_{\GL_2}(F_v)U_{\GL_2}(F_v)\setminus \GL_2(F_v)\right) \cdot F_v$, where $\GL_2(F_v)$ and $F_v$ embed into $\H(F_v)$ as 

\[ \iota: \left(\begin{smallmatrix}a& b \\c & d\end{smallmatrix}\right) \mapsto \left(\begin{smallmatrix}a & & b & \\ & a& &b\\ c& & d& \\& c & & d \end{smallmatrix}\right),\, u:\alpha \mapsto \left(\begin{smallmatrix}1 & &  & \\ &1 & & \\ & -\alpha &1 & \\&  & & 1 \end{smallmatrix}\right).  \]
This let us write the inner integral as 
\begin{align}\label{innerlocalintegralram}
\int_{Z_{\GL_2}(F_v)U_{\GL_2}(F_v)\setminus \GL_2(F_v)} \int_{F_v} f_{Q_\H} (\Phi_{Q}, \iota(g)h, z)W_{\varphi_v}(u(\alpha) \iota(g) h) d \alpha \, d g,
\end{align} where we used that $ u(\alpha)$ is in the Levi of the Klingen parabolic and its image via $\delta_{Q_\H}$ is trivial, so that it acts trivially on $f_{Q_\H}$. Note that for certain $\Phi_Q$,   \eqref{innerlocalintegralram} coincides with the local zeta integral of \cite[(3.1)]{FurusawaMorimoto}. Let $P_w := w^{-1} P_{\H} w$; by the Iwasawa decomposition, there is a compact subgroup $K_w$ of $\H(F_v)$ such that $P_w(F_v) K_w = \H(F_v)$. Moreover, the quotient $\GL_1 \simeq \tilde{P}_w\backslash P_w$ via the map 
\[ \iota': r \mapsto \left(\begin{smallmatrix}1 & &  & \\ & r & & \\ &   &1 & \\&  & & r \end{smallmatrix}\right). \]
Note that $\iota'(r)\iota(g) =\iota(g) \iota'(r)$. Thus, using this and doing the change of variables $u(\alpha) \mapsto  \iota'(r)^{-1} u(\alpha) \iota'(r)$, we write $I(\varphi_v, f_v, s, z)$ as
\begin{align*}
   \int_{K_w}\!\!\!\! f_{P_\H} (\Phi_{P}, w k, s-z+1) \int_{Z_{2}U_{2}\setminus \GL_2} \int_{F_v}\!\!\!\! f_{Q_\H} (\Phi_{Q}, \iota(g)k, z) \int_{F_v^\times} \!\!\!\!\!\! |r|^{-s-2}  W_{\varphi_v}(\iota'(r) u(\alpha) \iota(g) k) d r \, d \alpha \, d g\, d k. 
\end{align*} 
We now apply Dixmier--Malliavin theorem to the maximal unipotent subgroup $N$ of the Levi of $Q_\H$ to write $\varphi_v = \sum_j \xi_j * \varphi_{v,j}$ where  $\varphi_{v,j}$ are smooth vectors of $\Pi_v$ and $\xi_j \in C^\infty_c(N(F_v))$. Using the defining property of the Whittaker functional, we have 
\[ \int_{F_v^\times} \!\!\!\!\!\! |r|^{-s-2}  W_{\varphi_{v}}(\iota'(r) u(\alpha) \iota(g) k) d r = \sum_j  \int_{F_v^\times} \!\!\!\!\!\! |r|^{-s-2} \hat{\xi}_j(r^{-1}) W_{\varphi_{v,j}}(\iota'(r) u(\alpha) \iota(g) k) d r,\]
where 
\[\hat{\xi}_j(r) = \int_{N(F_v)} \xi_j\left(\left( \begin{smallmatrix}
    1 & n \\ & 1
\end{smallmatrix} \right)\right) \psi_v(r n) dn.\]
Note that $\hat{\xi}_j$ are compactly supported functions on $F_v^\times$. Our first claim now follows from \cite[Proposition 3.2]{FurusawaMorimoto}. Indeed, using the Iwasawa decomposition for $\GL_2(F_v) = U_2 T K_2$ where $T = \left \{ \left(\begin{smallmatrix}t& \\ & 1 \end{smallmatrix}\right) \right \}$, and \cite[Proposition 3.1]{FurusawaMorimoto}, a direct computation shows that each integral is majorated by 
\[ \left (\int_{F_v^\times} | \hat{\xi}_j(r^{-1})| |r|^{-s+z-4} dr \right ) \cdot \left( \int_{K_w}  \int_{K_{2}} \int_{F_v^\times}\int_{F_v}\!\!\!\! |t|^{z-1}
|W_{\varphi_{v,j}}(u(\alpha) \iota(t) k_2 k)|  d \alpha \, d t \, d k_2 \, dk \right). \]
The first converges for ${\rm Re}(s - z + 4) >0$, while by \cite[Proposition 3.2]{FurusawaMorimoto} there exists $\epsilon > 0$ such that the second converges for ${\rm Re}(z)>1-\epsilon$. This proves the first statement of (1). The meromorphic continuation of $\prod_{v\in \Sigma} I_v(\varphi_v,f_v,s,z)$ follows from our unramified computations as $I(\varphi,f,s,z)$ and the $L$-function $L^\Sigma(s,\Pi,\wedge^2)$ have meromorphic continuation (\textit{cf}. \cite[\S 7]{KimExterior}). The statements (2) and (3) follow from applying the Dixmier--Malliavin theorem as in \cite[Lemma 9.1]{Gan-SavinExceptionalSW}. 
\end{proof}

\section{Residues and the degree 5 \texorpdfstring{$L$}{L}-function of \texorpdfstring{$\GSp_4$}{GSp4}}

\subsection{A first-term identity between Eisenstein series for parabolics of $\GSp_4$}

\subsubsection{Siegel Eisenstein series on $\GSp_4$}\label{SiegelEisSeries}
We define the Siegel Eisenstein series associated to $\H$ and recollect some of their properties (for instance, see \cite[\S 5]{File13}). Recall from \S \ref{parabolics} the Siegel parabolic $P_\H$ of $\H$ with Levi decomposition $M_{P_\H} N_{P_\H}$ and modulus character $\delta_{P_\H}$. Let $s$ be a complex variable and let $I_{P_\H}(s)$ be the degenerate principal series representation of $\H(\A)$ consisting of smooth functions $f$ on $\H(\A)$ such that \[ f(n m g,s) = \delta_{P_\H}^{\tfrac{1}{3}(s + 1)}(m) f(g,s),\; \forall n \in N_{P_\H}(\A), \forall m \in M_{P_\H}(\A).  \]
Given a standard holomorphic section  $f \in I_{P_\H}(s)$, we define the degenerate Eisenstein series \[{\rm Eis}_{P_\H}(g, s, f) = \sum_{\gamma \in {P_\H}(F) \backslash \H(F)} f(\gamma g,s),\]
which is absolutely convergent in the half-plane ${\rm Re}(s) > 2$ and admits analytic continuation to a meromorphic function on $\C$ which satisfies a functional equation 
\begin{equation*}
    {\rm Eis}_{P_\H}(g,s, f) = {\rm Eis}_{P_\H}(g, 1-s, M_s f),
\end{equation*}
where $M_s$ is a certain intertwining operator. At a finite place $v$, let  $f_{v}^0$ be the function in $I_{P_\H(F_v)}(s)$ such that $f_{v}^0(k,s) = 1$ for all $k \in \H(\mathcal{O}_v)$. We assume that $f$ is a pure tensor and denote by $f_{v}$ its $v$-adic component, which equals $f_{v}^0$ almost everywhere. Let $S$ be the finite set of places containing the archimedean places and all $v$ such that  $f_{v} \ne f_{v}^0$. We then define the normalized Siegel Eisenstein series to be equal to 
\begin{equation}\label{normalizedSES}
    E^*_{P_\H}(g,s) := \zeta^S(s+1) \zeta^S(2s){\rm Eis}_{P_\H}(g,s, f).
\end{equation}
It follows from \cite[Theorem 1.1 and Theorem 4.12]{KudlaRallis} that $E^*_{P_\H}(g,s)$ has at most simple poles at the points $s = 1,2$. While $s=2$ is the end of the critical strip and so the residue of the Eisenstein series is the constant representation, i.e. there exists $f \in I_{P_\H}(s)$ such that ${\rm Res}_{s=2} E_{P_\H}^*(g,s)$ is constant, the residue of $E^*_{P_\H}(g,s)$ at $s=1$ is related to a value of a Klingen Eisenstein series by a result of Ikeda described in Proposition \ref{Ikeda}.

\subsubsection{The Klingen Eisenstein series and the first-term identity}

The Klingen parabolic $Q_\H = M_{Q_\H} N_{Q_\H}$ with modulus character $\delta_{Q_\H}$ was introduced in \S \ref{parabolics}. Let $I_{Q_\H}(s)$ be the degenerate principal series representation of $\H(\A)$ consisting of smooth functions $f'$ on $\H(\A)$ such that \[ f'(n m g,s) = \delta_{Q_\H}^{\tfrac{1}{2}(s + 1/2)}(m) f'(g,s),\; \forall n \in N_{Q_\H}(\A), \forall m \in M_{Q_\H}(\A).  \]
Given a smooth section  $f' \in I_{Q_\H}(s)$, we define the degenerate (Klingen) Eisenstein series \[   {\rm Eis}_{Q_\H}(g,s, f') =  {\sum_{\gamma \in {Q_\H}(F) \backslash \H(F)}} f'(\gamma g,s).\]
In the study of the poles of our integral we will employ the following result of Ikeda.

\begin{proposition}\label{Ikeda}
There exists a section $f_{1/2}' \in I_{Q_\H}(1/2)$ such that
\[{\rm Res}_{s =1 } E_{P_\H}^*(g,s) = \frac{{\rm Res}_{s=1}(\zeta^S_F(s))\zeta^S_F(3)}{2 \zeta^S_F(2)} {\rm Eis}_{Q_\H}( g , 1/2,f_{1/2}').  \]
\end{proposition}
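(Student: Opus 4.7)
The plan is to prove the identity by matching the constant terms of both sides along all standard parabolics of $\H$ and then invoking a uniqueness principle for automorphic forms of prescribed cuspidal support. Since $E^*_{P_\H}(g,s)$ and ${\rm Eis}_{Q_\H}(g,1/2,f_{1/2}')$ are both built from degenerate principal series, the residual automorphic form on the left-hand side is completely determined by its constant terms along $B_\H$, $Q_\H$, and $P_\H$.

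First, I would use the Langlands constant term formula to expand the constant term of ${\rm Eis}_{P_\H}(g,s,f)$ along $B_\H$ (and then along $Q_\H$) as a sum of intertwining operators indexed by a set of Weyl group representatives for $W_{M_{P_\H}} \backslash W / W_{T_\H}$. For unramified data outside $S$, the Gindikin--Karpelevich formula expresses each such intertwining operator as a product of ratios of Dedekind zeta values in $s$. The chosen normalization $\zeta^S(s+1)\zeta^S(2s)$ in \eqref{normalizedSES} is precisely what cancels these denominators and produces a simple pole at $s=1$; a careful bookkeeping shows that the polar term is concentrated on exactly one Weyl element, namely the one whose image lies in the induced representation supported on $Q_\H$ with parameter $s=1/2$.

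Next, I would perform the analogous calculation for ${\rm Eis}_{Q_\H}(g,1/2,f_{1/2}')$ by expanding its constant term along $B_\H$. Again by Gindikin--Karpelevich, the unramified contribution is a sum of characters of $T_\H(\A)$ weighted by explicit quotients of Dedekind zeta values, and one checks that the same Weyl element as above produces the term matching the residue from the previous step, up to a scalar. This scalar, involving $\zeta_F^S(3)/\zeta_F^S(2)$, comes from the Gindikin--Karpelevich factors attached to the simple roots not entering into the pole, and the $1/2$ factor arises from the Weyl orbit length on $P_\H \backslash \H / B_\H$ at the residual point.

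Finally, I would define $f_{1/2}'$ at ramified places by applying the appropriate singular intertwining operator $M_w$ to $f$, and at unramified places take $f_{1/2}'$ to be the spherical vector; this guarantees matching of \emph{all} constant terms, not just the ones along $B_\H$. Invoking the uniqueness of automorphic forms whose $B_\H$-constant term lies in a prescribed one-dimensional exponent space (a consequence of the multiplicity-one statement for $I_{Q_\H}(1/2)$ at unramified places, together with the factorization of the intertwining operator at ramified places), the identity follows with the claimed constant. The main obstacle I anticipate is the precise determination of the numerical constant $\tfrac{{\rm Res}_{s=1}(\zeta_F^S(s))\zeta_F^S(3)}{2\zeta_F^S(2)}$: tracking residues of zeta factors through the normalized intertwiners, and correctly identifying the Weyl group combinatorics at the residual point, is where sign and factor-of-two errors are most likely to appear. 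Matching against one explicit local calculation (e.g. at an unramified non-archimedean place using the Casselman--Shalika formalism) would fix this constant unambiguously.
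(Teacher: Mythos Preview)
Your approach is essentially correct in principle, but it takes a substantially different and more laborious route than the paper. The paper does not compute any constant terms or invoke any uniqueness principle; instead it reduces the $\GSp_4$ statement directly to Ikeda's already-established first-term identity for $\Sp_4$ (\cite[Proposition~1.10]{Ikeda}). The reduction uses the factorization $\H(\A)=\H(F)Z_\H(\A)\Sp_4(\A)\H(\A_f)$ from \cite{Harris-Kudla}: writing $g=g_0zg_1g_f$ with $g_1\in\Sp_4(\A)$, one has ${\rm Eis}_{P_\H}(g,s,f)={\rm Eis}_{P_{\Sp_4}}(g_1,s,r(g_f)f)$ and similarly for the Klingen series. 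Ikeda's result then gives ${\rm Res}_{s=1}E_{P_{\Sp_4}}(g_1,s,\cdot)=\tfrac{1}{2}E_{Q_{\Sp_4}}(g_1,1/2,{\rm Res}_{s=1}M_\omega(\cdot))$, and since the intertwining operator $M_\omega$ commutes with the right translation $r(g_f)$, the identity transfers back to $\H$ with $f_{1/2}':={\rm Res}_{s=1}M_\omega(f)$. The constant then drops out of the normalization $\zeta^S(s+1)\zeta^S(2s)$ evaluated at the residue.

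What each approach buys: yours is self-contained and would reproduce Ikeda's computation from scratch for $\GSp_4$ via Langlands' constant-term analysis, at the cost of the delicate bookkeeping you yourself flag (Weyl combinatorics, tracking zeta residues, and pinning down the factor of $2$). The paper's argument is a two-line reduction that offloads all of that work to the literature; in particular it sidesteps the numerical-constant difficulty entirely, since Ikeda has already done the matching. Your definition of $f_{1/2}'$ at ramified places via the singular intertwining operator is exactly the paper's choice, so on that point the two agree.
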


\begin{proof}

This is basically \cite[Proposition 1.10]{Ikeda}. Notice that the result of \emph{loc.cit.} is for $\Sp_4$-Eisenstein series. In order to use it, we consider the following. By \cite[p. 11]{Harris-Kudla}, we have a factorization 
\[\H(\A) = \H(F)Z_{\H}(\A)\Sp_4(\A)\H(\A_f).\]
Then, given any Eisenstein series $E(g,s,f)$ defined over $\H(\A)$, we may write
\[E(g, s, f) = E(g_0 z g_1g_f, s, f) = E(g_1g_f, s, f) = E(g_1, s, r(g_f)f),\]
by the invariant properties of the Eisenstein series, and where $r(g_f)$ denotes the right multiplication action of $g_f$ on $f$.
In particular, the values of the Eisenstein series $E_{P_\H}(g,s,f)$ and  ${\rm Eis}_{Q_\H}(g,s,f')$ at $g$ equal to the values at $g_1$ of the Eisenstein series $E_{P_{\Sp_4}}(g_1,s, r(g_f)f)$, $E_{Q_{\Sp_4}}(g_1,s, r(g_f)f')$ for $\Sp_4$  associated respectively to the parabolics $P_{\Sp_4}=P_\H \cap \Sp_4$ and $Q_{\Sp_4}=Q_\H \cap \Sp_4$, and to the elements $r(g_f) f \in I_{P(\A)}^{\Sp_4(\A)} \nu_P^{s+1}$, $r(g_f) f' \in I_{Q(\A)}^{\Sp_4(\A)} \nu_Q^{2s+1}$, where the inductions are not normalized. Here we denoted 
\begin{align*}
    \nu_P:&\left(\begin{smallmatrix} g & \star \\ &  J_2 {}^tg^{-1} J_2 \end{smallmatrix} \right)  \mapsto \left|{\rm det}(g)\right|, \\ 
     \nu_Q:&\left(\begin{smallmatrix} a &  &  \\ & g & \\  & & a^{-1} \end{smallmatrix} \right)  \mapsto \left|a\right|.
\end{align*}
With respect to \cite{Ikeda}, the Eisenstein series $E_{P_{\Sp_4}}(g_1,s, r(g_f)f)$, resp. $E_{Q_{\Sp_4}}(g_1,s, r(g_f)f')$ have complex variable $s$ shifted by $-1/2$. We now apply \cite[Proposition 1.10]{Ikeda}, which says that, given $\omega$ the longest element of the quotient of Weyl groups $ W_{M_{P_\H}} \backslash W_{\H}$ and $M_\omega$ the corresponding intertwining operator, then
\[{\rm Res}_{s=1} E_{P_{\Sp_4}}(g_1,s, r(g_f)f) = \tfrac{1}{2} E_{Q_{\Sp_4}}(g_1, 1/2, {\rm Res}_{s=1} M_\omega( r(g_f)f)).\]
As $M_\omega$ is an intertwining map, the action of $g_f$ commutes with it, hence we have 
\[{\rm Res}_{s=1} E_{P_{\Sp_4}}(g_1,s, r(g_f)f) = \tfrac{1}{2} E_{Q_{\Sp_4}}(g_1, 1/2, {\rm Res}_{s=1} r(g_f)M_\omega( f)).\]
This implies that ${\rm Res}_{s=1}{\rm Eis}_{P_\H}(g,s,f)$ equals to  $\tfrac{1}{2}{\rm Eis}_{Q_\H}(g,s,{\rm Res}_{s=1} M_\omega(f))$, i.e. $f_{1/2}'$ can be taken to be ${\rm Res}_{s=1} M_\omega(f)$. After normalizing $f$, we thus get the desired result for the normalized Eisenstein series $E_{P_\H}^*(g,s)$ (see also \cite[Proposition 1.8]{Ikeda}).
\end{proof}

\subsubsection{The constant term of the Borel Eisenstein series}

The following informal discussion will not be used in the article, but we add it as a motivation for the normalization of the Borel Eisenstein series and the existence of its poles along the line $z=1$.

Let $W_{\H}$ denote the Weyl group of $\H$; also let $\Lambda_{s,z} := (2z-1)\beta_1+(s-z)\beta_2\in X^{*}(T_\H)$, where $\beta_1,\;\beta_2$ denote the fundamental weights of $\H$. Let $S$ be a finite set of places such that the defining datum of the Eisenstein series $E_{B_{\H}}(g,s,z)$ is unramified outside $S$. The analytic properties of $E_{B_{\H}}(g,s,z)$ are reflected in its constant term, that is equal to 
\[\sum_{\omega\in W_{\H}}M(\omega,\Lambda_{s,z})f(g,s,z),\]
where $M(\omega,\Lambda_{s,z})f(g,s,z) = \int_{(U_\H \cap \omega \overline{U}_\H \omega^{-1})(\A)}f(\omega^{-1}ug,s,z)du$, with $\overline{U}_\H$ the unipotent radical of the lower-triangular Borel, is the well known intertwining map. The Gindikin-Karpelevic formula shows that the poles of $M(\omega,\Lambda_{s,z})$ are exactly the ones of the following functions: \[\prod_{\substack{\alpha>0\\\omega\alpha<0}}\zeta^S_F(\left<\Lambda_{s,z},\alpha^{\vee}\right>),\] 
where $\alpha$ ranges through the positive roots for $\H$ such that $\omega \alpha$ is negative. 
Therefore, the only possible poles are located along the lines determined by $\left<\Lambda_{s,z},\alpha^{\vee}\right> = 1$. Concretely, these are $\{z = 1\}$, $\{s = 1\}$, $\{s-z = 1\}$, and $\{z+s=2\}$. In fact, the constant term is a combination of the following zeta functions:
\[\zeta^S_F(2z-1),\;\;\zeta^S_F(s-z),\;\;\zeta^S_F(s+z-1),\;\;\zeta^S_F(2s-1).\]

\subsubsection{Residues of the Borel Eisenstein series}\label{ResBortoSQ}

In the following, we relate the residue of the Borel Eisenstein series along the line $z =1$ to the Siegel Eisenstein series. Let $f \in I_{B_\H}(s,z)$ be the section defining $\mathrm{Eis}_{B_{\H}}(g,z,s)$, then
\[\mathrm{Res}_{z = 1}\mathrm{Eis}_{B_{\H}}(g,z,s) =  \sum_{\gamma'\in P_{\H}(F)\setminus \H(F)}r(f)(\gamma'g),\]
where $r(f)(g) = \mathrm{Res}_{z = 1}\left(\sum_{\gamma\in B_{\H}(F)\setminus P_{\H}(F)}f(\gamma g,s,z)\right)$. Since $B_{\H}\backslash P_{\H}\simeq B_{\GL_2}\backslash  \GL_2$, we have \[f \in I_{P_{\H}(\A)}^{\H(\A)}\left(\delta_{P_{\H}}^{\frac{1}{3}(s+1)}I_{B_{\GL_2}(\A)}^{\GL_2(\A)}\delta_{B_{\GL_2}}^z\right).\] As the residue of degenerate Eisenstein series on $\GL_2$ is constant, we get that $r(f)(g)\in I_{P_\H}(s)$. Summing up, we have obtained the following:

\begin{proposition}\label{residuesofBES}
For $f$ and $r(f)$ as above we have that $\mathrm{Res}_{z = 1}\mathrm{Eis}_{B_{\H}}(g,z,s) = {\rm Eis}_{P_{\H}}(g,s, r(f))$.
\end{proposition}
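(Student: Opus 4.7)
The plan is to formalize the decomposition already suggested in the discussion preceding the statement. Since $B_\H \subseteq P_\H \subseteq \H$, in the region of absolute convergence (that is, $\mathrm{Re}(s), \mathrm{Re}(z) \gg 0$) I would decompose
\[
\mathrm{Eis}_{B_\H}(g,s,z) \;=\; \sum_{\gamma' \in P_\H(F) \backslash \H(F)} \;\sum_{\gamma \in B_\H(F)\backslash P_\H(F)} f(\gamma \gamma' g, s, z).
\]
The rewriting is legitimate because the double sum is absolutely convergent in that region, so Fubini applies and one may order the two summations.

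Next, I would identify the inner sum as a $\GL_2$-Eisenstein series. Induction by stages gives
\[
f(\cdot,s,z) \;\in\; I_{P_\H(\A)}^{\H(\A)}\!\left(\delta_{P_\H}^{\frac{1}{3}(s+1)}\, \otimes\, I_{B_{\GL_2}(\A)}^{\GL_2(\A)}\delta_{B_{\GL_2}}^{z}\right),
\]
because, under the Levi identification $M_{P_\H}\simeq \GL_2 \times \GL_1$ (with the $\GL_1$ factor being the similitude), the restriction of the central character along $T_\H$ agrees, for fixed $s$, with the $\GL_2$-induction character $\delta_{B_{\GL_2}}^{z}$. Using the canonical isomorphism $B_\H\backslash P_\H \simeq B_{\GL_2}\backslash \GL_2$, the inner sum is exactly a standard $\GL_2$ degenerate Eisenstein series on $M_{P_\H}$-valued sections, with spectral parameter $z$, evaluated at the $\GL_2$-component of $\gamma' g$.

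It is classical that the $\GL_2$ Eisenstein series built from $\delta_{B_{\GL_2}}^{z}$ has a simple pole at $z=1$ whose residue is a constant function on $\GL_2(\A)$. Applying this to the inner sum, the function
\[
r(f)(g) \;:=\; \mathrm{Res}_{z=1}\sum_{\gamma \in B_\H(F)\backslash P_\H(F)} f(\gamma g, s, z)
\]
is a well-defined section in $I_{P_\H}(s)$: the transformation property under $N_{P_\H}(\A)$ is automatic, and the transformation under $M_{P_\H}(\A)$ follows because only the $\GL_1$-similitude component contributes non-trivially after passing to the residue (the $\GL_2$-component is absorbed into the constant residue). Substituting back,
\[
\mathrm{Res}_{z=1}\mathrm{Eis}_{B_\H}(g,s,z) \;=\; \sum_{\gamma'\in P_\H(F)\backslash \H(F)} r(f)(\gamma' g) \;=\; \mathrm{Eis}_{P_\H}\!\bigl(g,s,r(f)\bigr),
\]
as meromorphic functions in $s$, by meromorphic continuation from the region of absolute convergence.

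The one point that requires care, and which I expect to be the main technical obstacle, is the interchange of the residue operation at $z=1$ with the outer summation over $P_\H(F)\backslash \H(F)$. I would handle this by the usual strategy: in a half-plane $\mathrm{Re}(s)\gg 0$, the Siegel Eisenstein series $\mathrm{Eis}_{P_\H}(g,s,r(f))$ converges absolutely and uniformly on compacta, and the same is true for $\mathrm{Eis}_{B_\H}(g,s,z)$ when $\mathrm{Re}(z)$ is also large; the Gindikin--Karpelevich computation of the constant term, together with the location of poles recalled in the preceding subsection, shows that only the factor $\zeta^S_F(2z-1)$ contributes a pole at $z=1$ and that after multiplying by $(z-1)$ the whole series extends holomorphically across $z=1$, giving a locally uniform limit. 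This justifies taking the residue inside the sum and concludes the identity in the stated region, whence everywhere by analytic continuation in $s$.
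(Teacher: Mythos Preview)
Your proof is correct and follows essentially the same approach as the paper: decompose the Borel Eisenstein sum along $B_\H \subset P_\H \subset \H$, use induction by stages to identify the inner sum as a $\GL_2$ Eisenstein series with parameter $z$, and invoke that its residue at $z=1$ is constant so that $r(f)\in I_{P_\H}(s)$. You are in fact more careful than the paper in justifying the interchange of the residue with the outer summation.
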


Similarly, we have a formula for the residue along $s-z=1$:
\begin{align*}
    \mathrm{Res}_{s-z = 1}\mathrm{Eis}_{B_{\H}}(g,z,s) &= \mathrm{Eis}_{Q_{\H}}(g,2z+1/2, r'(f)),
\end{align*}
where $r'(f)(g) = \mathrm{Res}_{s-z = 1}\left(\sum_{\gamma\in B_{\H}(F)\setminus Q_{\H}(F)}f(\gamma g,s,z)\right)$.

\subsection{Shalika models and a theta correspondence}\label{ss:Shalikamodelsandtheta}

The Shalika subgroup of $\G$ is defined as follows. For any $F$-algebra $R$, $S$ is given by
\begin{equation}\label{shalgrp}S(R) = \left\{\left(\begin{smallmatrix}a&b&&\\c&d&& \\ & &a&-b\\ & &-c&d\end{smallmatrix}\right) \left(\begin{smallmatrix} 1& &\alpha & x\\& 1&y&\overline{\alpha}\\ & & 1& \\ & & &1 \end{smallmatrix}\right)\;:\; \left(\begin{smallmatrix}a&b\\c&d\end{smallmatrix}\right) \in \GL_2(R), x,y\in \mathbb{G}_a(R),\;\alpha\in {\rm Res}_{E/F}\mathbb{G}_a(R) \right\}.\end{equation}
Note that the Shalika group $S$ is isomorphic to $\GL_2 N_{P_\G}$, where $N_{P_\G}$ is the unipotent radical of the Siegel parabolic $P_\G$ of $\G$, while $\GL_2$ embeds into the Levi of the Siegel parabolic $M_{P_\H}$ of $\H$ via the embedding $h \mapsto h^*= \left(\begin{smallmatrix}h& \\ & {\rm det}(h) J_2 {}^th^{-1} J_2\end{smallmatrix}\right)$. Notice that the modulus character $\delta_S$ of $S$ is trivial. 

Recall that we have fixed a non-degenerate character $\psi: F \backslash \A \to \C^\times$. Given $\eta \in E^\times$ such that $\bar{\eta} = - \eta$, we define
\[\chi_{\eta}: N_{P_\G}(F) \backslash N_{P_\G}(\A) \longrightarrow \C^{\times}, \, \left( \begin{smallmatrix}1& &\alpha&x\\&1 &y&\overline{\alpha}\\ & & 1& \\ & & &1 \end{smallmatrix}\right)  \mapsto \psi(\eta\alpha-\eta\overline{\alpha}).\]
By a slight abuse of notation, denote also by $\chi_{\eta}$ the character on $ S(F) \backslash S(\A)$ given by $ h^* n \mapsto \chi_\eta(n)$. Let $\Pi$ be a cuspidal automorphic representation of $\G(\A)$ with trivial central character.   

\begin{definition}
For a cusp form $\varphi$ in the space of $\Pi$, the unitary Shalika period is defined by 
\[ \mathcal{S}_{\varphi}^\eta(g) :=  \int_{ \A^\times \GL_2(F) \backslash \GL_2(\A)} \int_{N_{P_\G}(F) \backslash N_{P_\G}(\A)} \varphi \left( n h^* g\right) \chi_\eta^{-1}(n) d n d g.  \]
The representation $\Pi$ is said to have a Shalika model if there exist  $\varphi$ in $\Pi$ and $g \in \G(\A)$ such that $\mathcal{S}_{\varphi}^\eta(g) \ne 0$.
\end{definition}

In \cite{FurusawaMorimoto}, Furusawa and Morimoto characterized the representations $\Pi$ with unitary Shalika model in terms of the existence of a pole at $s=1$ of the partial twisted exterior square $L$-function. Namely, they showed the following.

\begin{theorem}[{\cite[Theorem 4.1]{FurusawaMorimoto}}]\label{FurusawaMmain}
Let $\Pi$ be an irreducible cuspidal automorphic representation of $\G(\A)$ with trivial central character. Then the following two conditions are equivalent: \begin{enumerate}
    \item The Shalika period $\mathcal{S}_{\varphi}^\eta(1)$ does not vanish on the space of $\Pi$.
    \item $\Pi$ is globally generic and the partial $L$-function $L^\Sigma(s,\Pi, \wedge^2)$ has a simple pole at $s=1$. 
\end{enumerate}
\end{theorem}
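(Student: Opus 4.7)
The plan is to combine two complementary ingredients: the Eulerian identity of Theorem \ref{mainunramified}, and an unfolding of the residue integral $J^*(\varphi, s) := \mathrm{Res}_{z=1} I^*(\varphi, s, z)$ to the Shalika period. Proposition \ref{residuesofBES} already identifies this residue with (a constant multiple of) the Siegel Eisenstein integral $\int_{\H(F) Z_{\H}(\A) \backslash \H(\A)} E^*_{P_\H}(g, s) \varphi(g)\, dg$. My first step would be to unfold this integral by opening the sum $\sum_{\gamma \in P_\H(F) \backslash \H(F)}$, exchanging it with the global integral, and analyzing the resulting double-coset space. Cuspidality of $\varphi$ eliminates all non-open orbits, and a Fourier expansion along the characters of $[N_{P_\G}]$ picks out the Shalika character $\chi_\eta$ on the open orbit. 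The stabilizer of $\chi_\eta$ should be precisely the Shalika subgroup $S$, yielding an identity of the form $J^*(\varphi, s) = \int \mathcal{S}_\varphi^\eta(h)\cdot f^*_{P_\H}(h,s)\,dh$ for a suitable section $f^*_{P_\H}$ of the Siegel Eisenstein series.

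For the direction (1) $\Rightarrow$ (2), assume $\mathcal{S}_\varphi^\eta(1) \ne 0$ on $V_\Pi$. Global genericity of $\Pi$ would follow by Fourier-expanding the Shalika period along the upper-triangular unipotent of $\GL_2 \subset S$ against a non-trivial additive character: this character together with $\chi_\eta|_{N_{P_\G}}$ extends to a principal character of $U_\G(\A)/U_\G(F)$ conjugate to the character $\chi$ of Proposition \ref{unfolding}, producing a non-zero Whittaker functional. For the $L$-function pole, the unfolding of Step 1 gives $J^*(\varphi, s) \not\equiv 0$ for a suitable choice of data, while Theorem \ref{mainunramified} combined with Proposition \ref{residuesofBES} yields
\[
J^*(\varphi, s) \;=\; L^\Sigma(s, \Pi, \wedge^2) \cdot \mathrm{Res}_{z=1}\bigl[L^\Sigma(z, \Pi, \wedge^2)\bigr] \cdot \prod_{v \in \Sigma} I(\varphi_v, f_v, s, 1).
\]
By Lemma \ref{LemmaonRamified}, the local factors are holomorphic and can be made non-vanishing at $z=1$ for appropriate data, so non-vanishing of $J^*(\varphi, s)$ forces a pole of $L^\Sigma(z, \Pi, \wedge^2)$ at $z=1$; its simplicity is inherited from that of $E^*_{P_\H}(g, s)$ at $s=1$ discussed in \S \ref{SiegelEisSeries}.

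Conversely, suppose $\Pi$ is globally generic and $L^\Sigma(s, \Pi, \wedge^2)$ has a simple pole at $s=1$. Using Lemma \ref{LemmaonRamified}(3), I would choose $\varphi$ and a section $f$ so that $\prod_{v \in \Sigma} I(\varphi_v, f_v, s_0, 1) \ne 0$ at some point $s_0$ where $L^\Sigma(s_0, \Pi, \wedge^2)$ is finite and non-zero. The displayed identity then gives $J^*(\varphi, s_0) \ne 0$, and by the unfolding of Step 1 the Shalika period $\mathcal{S}_\varphi^\eta$ is not identically zero on $V_\Pi$.

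The principal obstacle is the unfolding in Step 1: one must work out the full double-coset decomposition $P_\H(F) \backslash \H(F) / (S\cap \H)(F)$ (together with the auxiliary Fourier expansion into the ambient $\G$), identify stabilizers explicitly, verify that cuspidality kills all non-open orbits, and recognize the surviving inner integral as exactly $\mathcal{S}_\varphi^\eta$ up to the normalization constants coming from the residue of the Borel Eisenstein series. A secondary subtle point is excluding higher-order vanishing of $J^*(\varphi, s)$ at $s=1$, which should follow from the explicit holomorphy of the ramified local integrals at that point combined with the simplicity of the pole of $E^*_{P_\H}$.
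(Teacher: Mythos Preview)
The paper does not prove this statement; it is quoted from \cite[Theorem 4.1]{FurusawaMorimoto} and used as a black box (in Corollary \ref{corostupUNF} and Proposition \ref{TowardsMain2}). So there is no proof in the paper to compare against. Your proposal is rather an attempt to recover the theorem from the machinery developed here, which is a nice idea and mostly works, with one real gap.

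Your Step~1 is precisely Proposition \ref{unfoldingSiegel}, and your direction (2)$\Rightarrow$(1) is correct: the identity \eqref{partialform} (which relies only on Theorem \ref{mainunramified} and Lemma \ref{lemmaresstupid}, not on Theorem \ref{FurusawaMmain}) together with Lemma \ref{LemmaonRamified} and the Shalika unfolding forces $\mathcal{S}_\varphi^\eta\not\equiv 0$.

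The gap is your genericity argument in (1)$\Rightarrow$(2). The character $\chi_\eta$ is supported on the $(1,3)$ entry of $N_{P_\G}$, not on a simple-root subgroup of $U_\G$; the image of the upper unipotent of $\GL_2$ under $h\mapsto h^*$ lands only in the $F$-rational part of the $(1,2)$ entry, and the two together neither generate $U_\G$ nor carry a principal character. Worse, the Shalika period is the integral of $h\mapsto\varphi_{\chi_\eta}(h^*g)$ over $[\mathrm{PGL}_2]$, i.e.\ its \emph{constant} Fourier coefficient as a $\GL_2$-automorphic function, so its non-vanishing says nothing about non-constant Fourier coefficients. A correct argument internal to the paper is actually simpler: if $\mathcal{S}_\varphi^\eta\ne 0$, choose $f'\in I_{P_\H}(s)$ with $J(\varphi,f',s)\not\equiv 0$ via Proposition \ref{unfoldingSiegel}; since the residue map $r$ of \S\ref{ResBortoSQ} is surjective (the $\GL_2$-Eisenstein residue being a non-zero constant), pick $f$ with $r(f)=f'$, so that $I(\varphi,f,s,z)\not\equiv 0$ by Lemma \ref{lemmaresstupid}; but Proposition \ref{unfolding}, whose proof uses only cuspidality, expresses $I$ as a Whittaker integral, forcing $W_\varphi\ne 0$.

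Finally, the simplicity of the pole of $L^\Sigma(z,\Pi,\wedge^2)$ at $z=1$ comes from the simple pole of $E_B^*(g,s,z)$ at $z=1$ (equivalently, of the $\GL_2$-Eisenstein series inside $P_\H$), not from the pole of $E_{P_\H}^*(g,s)$ at $s=1$ as you wrote; these are poles in different variables.
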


Motivated by this, in \cite{morimoto}, Morimoto gave a characterization of cuspidal automorphic representations of $\G(\A)$ with Shalika models by means of the theta correspondence $\Theta$ of the pair $(\mathbf{P}\H(\A)^+,\mathbf{P}\G(\A))$, with 
 \[\H(\A)^+ := \{ (g,m_g) \in \H(\A)\,:\, m_g \in N_{E/F}( \A_E^\times) \}. \]
 
\begin{theorem}[{\cite[Theorem 3.6 \& (3.6)]{morimoto}}]\label{Morimotomain}
Let $\Pi$ be an irreducible cuspidal automorphic representation of $\G(\A)$ with trivial central character. Then $\Pi$ has a Shalika model if and only if $\Pi$ appears as a subquotient of $\Theta(\sigma^+)$, for an irreducible cuspidal automorphic representation $\sigma^+$ of $\H(\A)^+$ which appears as a subrepresentation of a globally generic cuspidal automorphic representation $\sigma$ of $\H(\A)$ with trivial central character. If any of these conditions hold, we have 
\[ L^\Sigma(s,\Pi, \wedge^2) = L^\Sigma(s,\sigma, {\rm std} \otimes \chi_{E/F}) \zeta^\Sigma_F(s),\]
where $\chi_{E/F}$ is the Hecke character associated to the extension $E/F$ and $\Sigma$ is a finite set of places which contains all the ramified places of $\Pi$, $\sigma$, and $E/F$.
\end{theorem}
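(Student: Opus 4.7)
The plan is to prove the theorem in two stages: first establish the equivalence between having a Shalika model and arising as a theta lift, and then deduce the $L$-function factorization from the functoriality of the unramified theta correspondence.

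For the forward direction (Shalika model implies theta lift), I would combine Theorem \ref{FurusawaMmain} with a seesaw duality argument for the dual pair $(\mathbf{P}\H(\A)^+, \mathbf{P}\G(\A))$. The key step is to relate the Shalika period on $\G$ to a Bessel-type period on $\H^+$ via an identity of inner products involving the Weil representation; the non-vanishing of $\mathcal{S}_\varphi^\eta$ then forces the theta lift $\Theta(\Pi^{\vee})$ to $\H(\A)^+$ to be non-zero and cuspidal. One then exploits the Rallis tower property to identify the first non-trivial occurrence, obtaining $\sigma^+$. The extension from $\sigma^+$ to a globally generic cuspidal $\sigma$ on $\H(\A)$ requires constructing an extension compatible with the character of $\H(\A)/\H(\A)^+ \simeq \A^\times/N_{E/F}(\A_E^\times)$, using that $\sigma^+$ and its Galois conjugate produce isomorphic theta lifts to $\mathbf{P}\G$ so that they can be assembled into a single representation of $\H(\A)$ which inherits genericity from the non-vanishing of a Whittaker coefficient of $\Pi$.

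For the reverse direction (theta lift implies Shalika model), I would realize $\Pi$ explicitly through the global theta integral and then unfold $\mathcal{S}_\varphi^\eta$ after inserting the theta kernel, exchanging the order of integration, and using the splitting of the Weil representation over the Shalika subgroup. The pairing against $\sigma^+$ then reduces to an elementary period on $\H^+$ which can be made non-zero by a suitable choice of test data; here the key compatibility is that the character $\eta$ used in the definition of $\chi_\eta$ matches (up to a normalization) the auxiliary datum defining the quasi-split unitary group $\H^+$, so that the Weil representation acts by the correct character on $N_{P_\G}$.

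For the $L$-function identity, the main input is that at an unramified place $v$ (say inert in $E$) the Satake parameter of $\Pi_v = \Theta(\sigma^+_v)$ is obtained from the parameter of $\sigma_v$ via the natural embedding ${}^L\mathbf{P}\H \hookrightarrow {}^L\mathbf{P}\G$; this is precisely the input used to derive \eqref{extTOstd}. Combining with the branching formula \eqref{brwedge}, which says $\wedge^2$ restricted to $\Sp_4(\C)$ decomposes as ${\rm std} \oplus \mathbf{1}$, and tracking the action of the non-trivial element of $\mathrm{Gal}(E/F)$ on ${}^L\G$ (which yields the twist by $\chi_{E_v/F_v}$), one obtains the local factorization at each unramified $v$; taking the product over $v \not\in \Sigma$ gives the partial $L$-function identity. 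The principal obstacle I expect is the forward direction: showing that a non-vanishing Shalika period forces the existence of the theta lift, since this requires a delicate analysis of cuspidality and non-vanishing along the Rallis tower for $(\mathbf{P}\H^+, \mathbf{P}\G)$, and the compatibility needed to lift $\sigma^+$ to a globally generic $\sigma$ on the full group $\H(\A)$, both of which rely on fine properties of the Weil representation rather than on a purely formal argument.
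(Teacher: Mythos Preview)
The paper does not contain a proof of this theorem: it is stated with the attribution \cite[Theorem 3.6 \& (3.6)]{morimoto} and invoked as an external input, with no proof environment following it. There is therefore nothing in the paper to compare your proposal against.

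That said, your sketch is a plausible outline of the kind of argument one would expect in Morimoto's paper. The $L$-function identity portion is essentially what the present paper already records place-by-place: at split $v$ one uses the identification of $\mathbf{P}\G(F_v)$ with $\mathrm{PGL}_4(F_v)$ and the branching \eqref{brwedge}, and at inert $v$ one uses \eqref{extTOstd}, so taking the product over $v\notin\Sigma$ gives the partial identity immediately. For the equivalence itself, your identification of the main difficulty (non-vanishing and cuspidality along the Rallis tower, and the passage from $\sigma^+$ on $\H(\A)^+$ to a globally generic $\sigma$ on $\H(\A)$) is accurate, but these are exactly the points that require the detailed local and global analysis carried out in \cite{morimoto}; your proposal gestures at the right mechanisms (seesaw, unfolding of the theta kernel over the Shalika subgroup) without supplying the computations that make them work. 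In particular, the claim that $\sigma^+$ and its Galois conjugate assemble into a single $\sigma$ that is automatically globally generic is not automatic and needs the specific arguments of \emph{loc.\ cit.} If your goal is to reproduce the proof rather than cite it, you should consult \cite{morimoto} directly; within the scope of the present paper, the theorem is used as a black box.
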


\subsection{The representation of the standard $L$-function of $\GSp_4$}

Let $\Pi$ be a cuspidal automorphic representation of $\G(\A)$ with trivial central character. Let $\Sigma$ be a finite set of places containing the archimedean places and all the ramified places for $\Pi$ and $E/F$. We take a cusp form $\varphi \in \Pi$, which we assume to be a pure tensor and consider the following integral:
\begin{equation*}  J(\varphi, f,   s):=\int_{\H(F)Z_{\H}(\A)\setminus \H(\A)} \mathrm{Eis}_{P_\H}(h,s,f) \varphi(h) d h,\end{equation*}
where $\mathrm{Eis}_{P_\H}(h,s,f)$ is the Siegel Eisenstein series for $\H$ introduced in \S \ref{SiegelEisSeries} associated to a factorizable standard holomorphic section $f$ unramified outside $\Sigma$. Note that this integral converges absolutely for the values of $s$ when $ \mathrm{Eis}_{P_\H}(h,s,f)$ is holomorphic, as the restriction to $\H(\A)$ of $\varphi$ is rapidly decreasing, while $\mathrm{Eis}_{P_\H}(h,s,f)$ is slowly increasing. Hence, it defines a meromorphic function. In \S \ref{sec:PropRSI}, we will show that $J(\varphi, f,   s)$ is holomorphic even at the points where the Eisenstein series $\mathrm{Eis}_{P_\H}(h,s,f)$ can have a pole. 

\begin{proposition}\label{unfoldingSiegel}
The integral $ J(\varphi, f,  s)$ unfolds to 
\[ \int_{\GL_2(\A)N_{P_\H}(\A)\setminus \H(\A)} f(h,s) \mathcal{S}_{\varphi}^{\eta}(h) d h.\]
\end{proposition}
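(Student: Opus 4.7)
My plan is a two-step unfolding: first unfold the Siegel Eisenstein series along $P_\H(F)\backslash\H(F)$, then Fourier-expand the resulting constant term of $\varphi$ along the $1$-dimensional abelian quotient $N_{P_\H}\backslash N_{P_\G}$ to bring in the Shalika character $\chi_\eta$. An orbit analysis for $M_{P_\H}(F)$ acting on these characters, combined with a key invariance property of the section $f$, will then produce the Shalika period.

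For the first step, absolute convergence for $\mathrm{Re}(s)\gg 0$ lets me swap the series $\mathrm{Eis}_{P_\H}(h,s,f)=\sum_{\gamma\in P_\H(F)\backslash\H(F)} f(\gamma h,s)$ with the outer integral to obtain $J=\int_{P_\H(F)Z_\H(\A)\backslash\H(\A)} f(h,s)\varphi(h)\,dh$. Decomposing $P_\H=M_{P_\H}N_{P_\H}$ and using left $N_{P_\H}(\A)$-invariance of $f$, this becomes
\[ J=\int_{M_{P_\H}(F)N_{P_\H}(\A)Z_\H(\A)\backslash\H(\A)} f(h,s)\,\varphi_{N_{P_\H}}(h)\,dh,\qquad \varphi_{N_{P_\H}}(h):=\int_{N_{P_\H}(F)\backslash N_{P_\H}(\A)}\varphi(nh)\,dn. \]
Since $N_{P_\H}\subset N_{P_\G}$ are abelian and $\varphi$ is $N_{P_\G}(F)$-invariant, Fourier expansion gives $\varphi_{N_{P_\H}}(h)=\sum_\chi\varphi_\chi(h)$, where $\varphi_\chi(h):=\int_{N_{P_\G}(F)\backslash N_{P_\G}(\A)}\varphi(n'h)\chi^{-1}(n')\,dn'$ and $\chi$ runs over characters of $N_{P_\G}(F)\backslash N_{P_\G}(\A)$ trivial on $N_{P_\H}(\A)$. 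Parametrizing the characters of $N_{P_\G}$ by $\psi(\mathrm{Tr}_{E/F}(\beta\alpha)+\gamma x+\delta y)$ on the coordinates $(\alpha,x,y)$, triviality on $N_{P_\H}$ forces $\gamma=\delta=0$ and $\bar\beta=-\beta$, so these are precisely the $\chi_\eta$ from \S\ref{ss:Shalikamodelsandtheta}. The $\eta=0$ term is the $N_{P_\G}$-constant term of $\varphi$ and vanishes by cuspidality.

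For the nonzero characters, a direct computation with $m=\left(\begin{smallmatrix} g & \\ & \mu J_2{}^tg^{-1}J_2\end{smallmatrix}\right)\in M_{P_\H}(F)$ shows that conjugation by $m$ sends $\chi_\eta$ to $\chi_{(\mu/\det g)\eta}$; hence $M_{P_\H}(F)$ acts transitively on $\{\chi_\eta:\eta\neq 0\}$ with stabilizer exactly $\{h^*:h\in\GL_2(F)\}$ (the locus $\mu=\det(g)$). Collapsing the orbit sum into the outer integral against a fixed nonzero $\eta$ yields
\[ J=\int_{\GL_2(F)N_{P_\H}(\A)Z_\H(\A)\backslash\H(\A)} f(h,s)\int_{N_{P_\G}(F)\backslash N_{P_\G}(\A)}\varphi(nh)\chi_\eta^{-1}(n)\,dn\,dh, \]
where $\GL_2(F)$ is embedded via $h\mapsto h^*$. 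The final ingredient is the identity $\delta_{P_\H}(h^*)=|\det(h)/\det(h)|^3=1$, which shows $f$ is left-invariant under all of $\GL_2(\A)$ via $*$. This lets me insert an integration over $Z_{\GL_2}(\A)\GL_2(F)\backslash\GL_2(\A)\simeq \A^\times\GL_2(F)\backslash\GL_2(\A)$ (using that $Z_\H$ matches $Z_{\GL_2}$ under $*$) to enlarge the outer domain to $\GL_2(\A)N_{P_\H}(\A)\backslash\H(\A)$, at which point the inner double integral is precisely $\mathcal{S}_\varphi^\eta(h)$.

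The main obstacle is the explicit orbit/stabilizer computation: one must carry out the conjugation on the Hermitian block $X=\left(\begin{smallmatrix}\alpha & x\\ y & \bar\alpha\end{smallmatrix}\right)$ and verify that $X\mapsto \tfrac{\mu}{\det(g)}\,g^{-1}X J_2{}^tg^{-1}J_2$ induces $(\alpha-\bar\alpha)\mapsto \tfrac{\mu}{\det(g)}(\alpha-\bar\alpha)$, from which both the transitivity on nonzero $\eta$ and the identification of the stabilizer as $\GL_2$ via $*$ follow. Everything else is measure-theoretic bookkeeping organized around the invariance $\delta_{P_\H}(h^*)=1$.
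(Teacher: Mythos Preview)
Your proposal is correct and follows essentially the same route as the paper's proof: unfold the Siegel Eisenstein series, Fourier-expand the resulting $N_{P_\H}$-constant term along $[N_{P_\G}/N_{P_\H}]$, kill the trivial orbit by cuspidality along $N_{P_\G}$, identify the stabilizer of $\chi_\eta$ in $M_{P_\H}(F)$ with $\GL_2(F)$ via $h\mapsto h^*$, and use $\delta_{P_\H}(h^*)=1$ to collapse the $\GL_2$-integration into the Shalika period. The only cosmetic discrepancy is the direction of the action on characters (you obtain $\eta\mapsto(\mu/\det g)\,\eta$ whereas the paper gets the reciprocal), which reflects the choice of $m n' m^{-1}$ versus $m^{-1}n'm$ and does not affect the stabilizer or the orbit structure.
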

\begin{proof}
We start by unfolding the Eisenstein series; for ${\rm Re}(s)$ big enough, we get
\[\int_{P_\H(F)Z_\H(\A)\setminus \H(\A)} f(h,s) \varphi(h) d h .\]
Collapse the sum over the unipotent radical $N_{P_\H}$ of $P_\H$ to get
\begin{align*}
\int_{M_{P_\H}(F)N_{P_\H}(\A)Z_\H(\A)\setminus \H(\A)} \int_{N_{P_\H}(F) \backslash N_{P_\H}(\A)} f(nh,s) \varphi(nh) dn d h = \int_{\cdots} f(h,s) \int_{N_{P_\H}(F) \backslash N_{P_\H}(\A)}  \varphi(nh) dn d h,
\end{align*}
as $f \in I_{P_\H}(s)$.
Now Fourier expanding over $[ N_{P_\G}/N_{P_\H} ]$, with $N_{P_\G}$ the unipotent radical of the Siegel parabolic of $\G$, the integral equals 
\[ \int_{M_{P_\H}(F)N_{P_\H}(\A)Z_\H(\A)\setminus \H(\A)} f(h,s) \sum_{\chi}\varphi_\chi(h) d h,\]
where the sum runs over the characters $ \chi : [ N_{P_\G}/N_{P_\H} ] \to \C^\times$ and where we have denoted \[ \varphi_\chi(h) = \int_{N_{P_\G}(F) \backslash N_{P_\G}(\A)} \varphi(n' h) \chi^{-1}(n') d n'. \]
\noindent Any character $\chi: [N_{P_\G}] \to \C^\times$ is of the form $n' \mapsto \psi( {\rm Tr} ( A \cdot \left(\begin{smallmatrix} \alpha & x  \\ y & \bar{\alpha} \end{smallmatrix} \right) )), $ where $A \in M_{2 \times 2}(E)$ is such that $\overline{A}^t  = J_2 A J_2$, $\psi$ is a non-trivial additive character on $F \backslash \A$, and where we have written \[ n' =  \left(\begin{smallmatrix} 1 & & \alpha & x\\ & 1 & y & \bar{\alpha} \\ & & 1 & \\ & & & 1\end{smallmatrix} \right),\; \text{ with } x,y \in \A, \alpha \in \A_E.\] If $A = (a_{i,j})$, the associated character $\chi$ is trivial on $N_{P_\H}(\A)$ if $a_{1,1}+ a_{2,2} = 0$ and $a_{1,2}=a_{2,1} = 0$. We deduce that any character $\chi: [ N_{P_\G}/N_{P_\H}] \to \C^\times$ is of the form \[n' \mapsto \psi( {\rm Tr} ( \left(\begin{smallmatrix} \eta &   \\ & -\eta \end{smallmatrix} \right) \cdot \left(\begin{smallmatrix} \alpha & x  \\ y & \bar{\alpha} \end{smallmatrix} \right) )) = \psi(\eta \alpha - \eta \bar{\alpha} ) ,\] with $\eta \in E^\times$ such that $\bar{\eta} = - \eta$. To emphasize its dependence on $\eta$, we denote this character by $\chi_\eta$.

As $M_{P_\H}$ acts on $N_{P_\G}$ by conjugation and this action preserves $N_{P_\H}$, $M_{P_\H}(F)$ acts on the space of characters of $[N_{P_\G}/N_{P_\H}]$. Indeed, if $ m  =\left(\begin{smallmatrix} g & \\ & \mu J_2 {}^tg^{-1} J_2 \end{smallmatrix} \right) \in M_{P_\H}(F)$ , \[ \chi_\eta( m n' m^{-1}) = \chi_{\eta'}(n'),\; \text{ with } \eta' = \mu^{-1} {\rm det}(g) \eta. \] 
Then, $M_{P_\H}(F)$ acts on the space of characters of $[N_{P_\G}/N_{P_\H}]$ with two orbits, the trivial one and an open one associated to any $\chi_\eta$ with $\eta \ne 0$. Using this action, we can write the integral as 
\begin{align}\label{eq:Fexpansionandaction}\int_{M_{P_\H}(F)N_{P_\H}(\A)Z_\H(\A)\setminus \H(\A)} f(h,s) \int_{[N_{P_\G}]} \varphi(n' h) dn' d h +   \int_{M_\eta(F)N_{P_\H}(\A)Z_\H(\A)\setminus \H(\A)} f(h,s)\varphi_{\chi_\eta}(h) d h,\end{align} 
where $M_\eta(F)$  denotes the stabiliser in $M_{P_\G}(F)$ of $\chi_\eta$. Precisely, $m \in M_\eta(F)$ if $\mu = {\rm det}(g)$, hence we can fix an isomorphism $\GL_2(F) \simeq M_\eta(F)$, sending $g \mapsto m(g,{\rm det}(g))$. As the first integral of \eqref{eq:Fexpansionandaction} vanishes because of cuspidality of $\varphi$ along $[N_{P_\G}]$, \[  J(\varphi, f,   s) =\int_{\GL_2(F)N_{P_\H}(\A)Z_\H(\A)\setminus \H(\A)} f(h,s)\varphi_{\chi_\eta}(h) d h.\]
Collapse the sum over $[\GL_2]$, to get 
\begin{align*}
     J(\varphi, f, s)&= \int_{\GL_2(F)N_{P_\H}(\A)Z_\H\setminus \H(\A)} f(h,s)\varphi_{\chi_\eta}(h) d h \\ &= \int_{\GL_2(\A)N_{P_\H}(\A)\setminus \H(\A)} \int_{\GL_2(F) Z_\H(\A) \backslash \GL_2(\A)} f(m(g,{\rm det}(g)) h,s)\varphi_{\chi_\eta}(m(g,{\rm det}(g)) h) d g d h \\
    &= \int_{\GL_2(\A)N_{P_\H}(\A)\setminus \H(\A)} f(h,s) \int_{\GL_2(F) Z_\H(\A) \backslash \GL_2(\A)} \varphi_{\chi_\eta}(g h) d g d h \\
    &= \int_{\GL_2(\A)N_{P_\H}(\A)\setminus \H(\A)} f(h,s) \mathcal{S}_{\varphi}^{\eta}(h) d h, 
\end{align*}
where we have used that $\delta_{P_\H}(m(g,{\rm det}(g))) =\left|\tfrac{{\rm det}(g)}{{\rm det}(g)}\right|^3=1$.
\end{proof}

\begin{corollary}\label{corostupUNF}
The integral $ J(\varphi, f ,s)$ is identically zero unless $\Pi$ is globally generic and $L^{\Sigma}(s, \Pi, \wedge^2)$ has a simple pole at $s=1$. 
\end{corollary}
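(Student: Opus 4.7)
The plan is to combine Proposition \ref{unfoldingSiegel} with Theorem \ref{FurusawaMmain} and a meromorphic continuation argument. First, for $\mathrm{Re}(s)$ sufficiently large (so that the Eisenstein series is given by its defining sum), the unfolding identity gives
\[
J(\varphi, f, s) = \int_{\GL_2(\A)N_{P_\H}(\A)\setminus \H(\A)} f(h,s)\, \mathcal{S}_{\varphi}^{\eta}(h)\, dh.
\]
Hence the whole integral is controlled by the Shalika period $\mathcal{S}_{\varphi}^{\eta}$.

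Next, I would observe that if $\mathcal{S}_{\varphi}^{\eta}(1)$ vanishes for every $\varphi$ in the space of $\Pi$, then by right translation $\mathcal{S}_{\varphi}^{\eta}(h) = \mathcal{S}_{R(h)\varphi}^{\eta}(1) = 0$ for all $h \in \G(\A)$ and all $\varphi \in \Pi$, since $R(h)\varphi$ still lies in the space of $\Pi$. Consequently the integrand of the unfolded expression is identically zero, so $J(\varphi, f, s) = 0$ on the domain of absolute convergence.

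Now apply Theorem \ref{FurusawaMmain}: the Shalika period $\mathcal{S}_{\varphi}^{\eta}(1)$ is non-zero on $\Pi$ if and only if $\Pi$ is globally generic and $L^{\Sigma}(s,\Pi,\wedge^2)$ has a simple pole at $s=1$. Taking the contrapositive, whenever either of these conditions fails, the Shalika period vanishes identically on $\Pi$, so $J(\varphi, f, s)$ is zero for $\mathrm{Re}(s)$ large.

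Finally, since $\mathrm{Eis}_{P_\H}(h,s,f)$ admits meromorphic continuation to $\C$ and the integral $J(\varphi, f, s)$ defines a meromorphic function of $s$, vanishing on a right half-plane forces vanishing identically as a meromorphic function, completing the proof. I do not expect any real obstacle here: this is a bookkeeping corollary of the unfolding and the Furusawa--Morimoto characterization, with the only mildly delicate point being the justification that non-vanishing of $\mathcal{S}^{\eta}_{\varphi}(h)$ for some $(\varphi, h)$ is equivalent to non-vanishing of $\mathcal{S}^{\eta}_{\varphi}(1)$ for some $\varphi$, which follows immediately from $\Pi$-equivariance.
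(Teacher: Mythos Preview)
Your proof is correct and follows essentially the same approach as the paper: it combines the unfolded expression from Proposition~\ref{unfoldingSiegel} with Theorem~\ref{FurusawaMmain}, taking the contrapositive. You have simply made explicit the right-translation argument and the meromorphic continuation step that the paper leaves implicit in its one-line proof.
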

\begin{proof}
If $J(\varphi, f ,s)$ is not identically zero, then the Shalika period does not vanish on $\Pi$. The result then follows from Theorem \ref{FurusawaMmain}.  
\end{proof}
Corollary \ref{corostupUNF} is coherent with the following.

\begin{lemma}\label{lemmaresstupid}
For $f$ and $r(f)$ as in  \S \ref{ResBortoSQ}, we have that \[\mathrm{Res}_{z = 1} I(\varphi,f,s,z) = J(\varphi, r(f),s).\]   
\end{lemma}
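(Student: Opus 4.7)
The plan is to extract the residue at $z=1$ by interchanging the residue operation with the integration over $\H(F) Z_\H(\A) \setminus \H(\A)$, and then to invoke Proposition \ref{residuesofBES} to rewrite the resulting integrand in terms of the Siegel Eisenstein series attached to $r(f)$.

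First I would recall that, for $\mathrm{Re}(s)$ and $\mathrm{Re}(z)$ in the region of absolute convergence of $\mathrm{Eis}_{B_\H}(g,s,z)$ (with $\mathrm{Re}(z)$ large enough so that the only pole of concern lies well to the right of the contour), the integral $I(\varphi,f,s,z)$ is absolutely convergent since $\varphi$ is cuspidal and hence rapidly decreasing on $\H(F)Z_\H(\A)\setminus \H(\A)$. By the meromorphic continuation of the Borel Eisenstein series together with its moderate growth on vertical strips (uniformly in $g$ on compacta), the function $(z-1)\mathrm{Eis}_{B_\H}(g,s,z)$ extends to a holomorphic function in a neighbourhood of $z=1$, and pairing it against a rapidly decreasing cusp form justifies, by dominated convergence, the identity
\[
\mathrm{Res}_{z=1} I(\varphi,f,s,z) \;=\; \int_{\H(F) Z_\H(\A)\setminus \H(\A)} \bigl(\mathrm{Res}_{z=1}\mathrm{Eis}_{B_\H}(g,s,z)\bigr)\,\varphi(g)\,dg.
\]

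Next, I would apply Proposition \ref{residuesofBES}, which states that the residue at $z=1$ of $\mathrm{Eis}_{B_\H}(g,s,z)$ equals the Siegel Eisenstein series $\mathrm{Eis}_{P_\H}(g,s,r(f))$ attached to the section $r(f) \in I_{P_\H}(s)$. Substituting this identity into the above integral gives
\[
\mathrm{Res}_{z=1} I(\varphi,f,s,z) \;=\; \int_{\H(F) Z_\H(\A)\setminus \H(\A)} \mathrm{Eis}_{P_\H}(g,s,r(f))\,\varphi(g)\,dg \;=\; J(\varphi,r(f),s),
\]
which is the desired equality.

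The only nontrivial point is the exchange of the residue with the integral; this is the standard justification used throughout the Rankin--Selberg literature, and in our setting it rests on the simple nature of the pole of $\mathrm{Eis}_{B_\H}$ at $z=1$ (cf. the discussion of the constant term earlier, which shows the pole arises from a single factor $\zeta_F^S(2z-1)$), the moderate growth of the regularized Eisenstein series, and the rapid decay of $\varphi$ on the quotient. Once this exchange is in place, the proof is immediate from Proposition \ref{residuesofBES}.
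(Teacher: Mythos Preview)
Your proof is correct and follows exactly the approach of the paper, which simply says the lemma ``follows immediately from Proposition \ref{residuesofBES}.'' You have supplied the standard analytic justification (exchange of residue and integral via rapid decay of $\varphi$ and moderate growth of the regularized Eisenstein series) that the paper leaves implicit, but the argument is the same.
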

\begin{proof}
    It follows immediately from Proposition \ref{residuesofBES}.
\end{proof}

By Lemma \ref{lemmaresstupid} and Theorem \ref{mainunramified}, we obtain that 
 \begin{align}
     J(\varphi, r(f),s) &= {\rm Res}_{z=1} \left( \frac{L^\Sigma(s, \Pi, \wedge^2)L^\Sigma(z, \Pi, \wedge^2)}{\zeta_F^\Sigma(2z)\zeta_F^\Sigma(2s)\zeta_F^\Sigma(s+z)\zeta_F^\Sigma(s-z+1)}  \prod_{v \in \Sigma} I(\varphi_v,f_v, s  , z ) \right) \nonumber\\ 
     &= \frac{L^\Sigma(s, \Pi, \wedge^2)}{\zeta_F^\Sigma(2)\zeta_F^\Sigma(2s)\zeta_F^\Sigma(s+1)\zeta_F^\Sigma(s)}{\rm Res}_{z=1} \left( L^\Sigma(z, \Pi, \wedge^2)  \prod_{v \in \Sigma}
     I(\varphi_v,f_v, s  , z ) \right).\label{partialform}
 \end{align}

Let $\Sigma_\infty \subseteq \Sigma$ denote the set of archimedean places of $F$.

\begin{proposition}\label{TowardsMain2}
Given a cusp form $\varphi$ in $\Pi$, there exists a section $f$ such that
 \begin{align*}
     J(\varphi, r(f),s) = \frac{L^\Sigma(s, \Pi, \wedge^2)}{\zeta_F^\Sigma(2)\zeta_F^\Sigma(2s)\zeta_F^\Sigma(s+1)\zeta_F^\Sigma(s)} \prod_{v \in \Sigma}
     I(\varphi_v,f_v, s  , 1 ){\rm Res}_{z=1} \left( L^\Sigma(z, \Pi, \wedge^2)\right).
 \end{align*}
  Moreover, if $\Pi$ has a non trivial Shalika period there exists a cusp form $\varphi'$ in $\Pi$ and a section $f$ such that
 \begin{align*}
     J(\varphi, r(f),s) = \frac{L^\Sigma(s,\sigma, {\rm std} \otimes \chi_{E/F})}{ \zeta_F^\Sigma(2s)\zeta_F^\Sigma(s+1)} \prod_{v \in \Sigma_\infty}
     I(\varphi_v',f_v, s  , 1 ),
 \end{align*}
 with $\sigma$ a globally generic cuspidal automorphic representation of $\H(\A)$ as in Theorem \ref{Morimotomain}.
\end{proposition}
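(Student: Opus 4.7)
The plan is to derive both identities from equation \eqref{partialform}, combining the analytic properties established in Lemma \ref{LemmaonRamified} with the $L$-function factorization afforded by Theorem \ref{Morimotomain} in the Shalika case.

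For the first identity, I would distribute the residue at $z = 1$ across the product on the right-hand side of \eqref{partialform}. By part (1) of Lemma \ref{LemmaonRamified}, each local factor $I(\varphi_v, f_v, s, z)$ can be arranged so as to converge absolutely on a neighborhood of $\mathrm{Re}(z) = 1$, and hence is holomorphic at $z = 1$. Since only the factor $L^\Sigma(z, \Pi, \wedge^2)$ contributes a pole at $z = 1$, we obtain
\[
{\rm Res}_{z=1}\!\left(L^\Sigma(z, \Pi, \wedge^2) \prod_{v \in \Sigma} I(\varphi_v, f_v, s, z)\right) = {\rm Res}_{z=1}\!L^\Sigma(z, \Pi, \wedge^2)\cdot \prod_{v \in \Sigma} I(\varphi_v, f_v, s, 1),
\]
and substituting back into \eqref{partialform} gives the first formula.

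For the second identity, assume that the Shalika period is non-trivial on $\Pi$. Theorem \ref{Morimotomain} then provides a globally generic cuspidal automorphic representation $\sigma$ of $\H(\A)$ with trivial central character such that
\[L^\Sigma(s, \Pi, \wedge^2) = L^\Sigma(s, \sigma, {\rm std} \otimes \chi_{E/F})\, \zeta_F^\Sigma(s),\]
while Theorem \ref{FurusawaMmain} ensures that $R := {\rm Res}_{z=1} L^\Sigma(z, \Pi, \wedge^2)$ is a non-zero constant. Substituting this factorization into the first identity cancels one factor of $\zeta_F^\Sigma(s)$ from the denominator. To eliminate the remaining factors at finite places of $\Sigma$, together with the scalars $R$ and $\zeta_F^\Sigma(2)^{-1}$, I would use part (2) of Lemma \ref{LemmaonRamified} to select local data at each $v \in \Sigma \setminus \Sigma_\infty$ such that $I(\varphi_v, f_v, s, z) = c_v$ is a non-zero constant independent of $s$ and $z$. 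The resulting scalar $C := R\, \zeta_F^\Sigma(2)^{-1}\prod_{v \in \Sigma \setminus \Sigma_\infty} c_v$ is then non-zero and can be absorbed by taking $\varphi' \in \Pi$ to be the cusp form obtained from $\varphi$ by rescaling one archimedean local component by $C$; by linearity of the local Whittaker integrals in the cusp form, this yields the second formula.

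I do not expect a substantive obstacle: the argument is a careful algebraic bookkeeping on top of the analytic inputs already in hand. The one delicate verification is the holomorphy at $z = 1$ of the product of local integrals, which is precisely what the convergence estimates in Lemma \ref{LemmaonRamified}(1) are designed to provide.
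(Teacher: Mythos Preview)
Your proposal is correct and follows essentially the same route as the paper: both arguments use the convergence in Lemma~\ref{LemmaonRamified}(1) to justify passing the residue at $z=1$ through the product of local integrals in \eqref{partialform}, and then, under the Shalika hypothesis, invoke Theorems~\ref{FurusawaMmain} and~\ref{Morimotomain} together with Lemma~\ref{LemmaonRamified}(2) to reduce the finite-place contribution in $\Sigma$ to a non-zero constant that is absorbed by rescaling one archimedean component of $\varphi$. The only cosmetic difference is that the paper selects the finite-place data to be constant already when establishing the first identity, whereas you (more in line with the ``given $\varphi$'' hypothesis) defer that choice to the second part.
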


\begin{proof}
By Lemma \ref{LemmaonRamified}, we can choose $\varphi_v$ and $f_v$ such that $\prod_{v \in \Sigma - \Sigma_\infty}
     I(\varphi_v,f_v, s  , z )$ is a non-zero constant $C$ independent of $s$ and $z$ and there exists $\epsilon > 0$ such that $\prod_{v \in \Sigma_\infty}
     I(\varphi_v,f_v, s  , z )$ converges absolutely for ${\rm Re}(z)>1-\epsilon$ and ${\rm Re}(s) > {\rm Re}(z) -4$. This and \eqref{partialform} prove the first equality. 
     Now suppose that $\Pi$ has a Shalika period. By Theorems \ref{FurusawaMmain} and \ref{Morimotomain}, ${\rm Res}_{z=1} \left( L^\Sigma(z, \Pi, \wedge^2)\right) \ne 0$ and there exists a globally generic cuspidal automorphic representation $\sigma$ of $\H(\A)$ with trivial central character such that, up to enlarging $\Sigma$,
\[ L^\Sigma(s,\Pi, \wedge^2) = L^\Sigma(s,\sigma, {\rm std} \otimes \chi_{E/F}) \zeta^\Sigma_F(s).\] 
This implies that 
     \begin{align*}
     J(\varphi, r(f),s) = \frac{C\cdot L^\Sigma(s,\sigma, {\rm std} \otimes \chi_{E/F})}{\zeta_F^\Sigma(2)\zeta_F^\Sigma(2s)\zeta_F^\Sigma(s+1)} \prod_{v \in \Sigma_\infty}
     I(\varphi_v,f_v, s  , 1 )L^\Sigma(1,\sigma, {\rm std} \otimes \chi_{E/F}){\rm Res}_{z=1} \zeta_F^\Sigma(z).
 \end{align*}
For a given $v \in \Sigma_{\infty}$, define $\varphi'_v : = [C \cdot L^\Sigma(1,\sigma, {\rm std} \otimes \chi_{E/F}){\rm Res}_{z=1} \zeta_F^\Sigma(z)]^{-1}\zeta_F^\Sigma(2) \cdot \varphi_v$ and let $\varphi_v' = \varphi_v$ otherwise. Then we have 
\begin{align*}
     J(\varphi, r(f),s) = \frac{L^\Sigma(s,\sigma, {\rm std} \otimes \chi_{E/F})}{ \zeta_F^\Sigma(2s)\zeta_F^\Sigma(s+1)} \prod_{v \in \Sigma_\infty}
     I(\varphi_v',f_v, s  , 1 ),
 \end{align*}
as desired.
\end{proof}

\noindent Let \begin{equation*}  J^*(\varphi,   s):=\int_{\H(F)Z_{\H}(\A)\setminus \H(\A)} E_{P_\H}^*(h,s) \varphi(h) d h,\end{equation*} where $E_{P_\H}^*(h,s)$ denotes the normalized Siegel Eisenstein series of \eqref{normalizedSES}. By Proposition \ref{TowardsMain2}, after normalizing the section $r(f)$, we get the following.
\begin{theorem}\label{Thisismain2}
Given a cusp form $\varphi$ in $\Pi$, there exists a cusp form $\varphi'$ and section $f$ such that \[J^*(\varphi,   s) = L^\Sigma(s,\sigma, {\rm std} \otimes \chi_{E/F}) \cdot \prod_{v \in \Sigma_\infty}
     I(\varphi_v',f_v, s  , 1 ). \]
\end{theorem}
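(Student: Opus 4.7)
The plan is to obtain Theorem \ref{Thisismain2} as a direct corollary of Proposition \ref{TowardsMain2} by tracking the extra zeta factors that convert the bare Siegel Eisenstein series into its normalized form $E^*_{P_\H}$. First, since $\Pi$ is assumed to arise as the theta lift of the globally generic cuspidal automorphic representation $\sigma$ of $\H(\A)$, Theorem \ref{Morimotomain} guarantees that $\Pi$ has a non-vanishing Shalika period. This puts us in the setting of the second assertion of Proposition \ref{TowardsMain2}, so we obtain a cusp form $\varphi' \in \Pi$ and a Borel section $f$ such that
\[
J(\varphi, r(f), s) \;=\; \frac{L^\Sigma(s, \sigma, \mathrm{std}\otimes\chi_{E/F})}{\zeta^\Sigma_F(2s)\,\zeta^\Sigma_F(s+1)} \prod_{v\in \Sigma_\infty} I(\varphi_v', f_v, s, 1).
\]

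Next, I would compare the Siegel Eisenstein series $\mathrm{Eis}_{P_\H}(g, s, r(f))$ obtained via the residue map of Proposition \ref{residuesofBES} with the normalized series $E^*_{P_\H}(g, s)$ defined in \eqref{normalizedSES}. At each place $v\notin \Sigma$, the factor $f_v$ is the standard spherical vector taking value $1$ at the identity, and the local component of the residue map $f_v \mapsto r(f)_v$ produces a spherical section of $I_{P_\H(F_v)}(s)$, which is a scalar multiple of the standard spherical section $f_v^0$ of \S\ref{SiegelEisSeries}. Absorbing this scalar into the choice of $\varphi'$, we may arrange that $r(f)_v = f_v^0$ for all $v\notin \Sigma$. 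The normalization \eqref{normalizedSES} then reads
\[
J^*(\varphi, s) \;=\; \zeta^\Sigma_F(s+1)\,\zeta^\Sigma_F(2s) \cdot J(\varphi, r(f), s),
\]
and multiplying the previous identity through by $\zeta^\Sigma_F(s+1)\zeta^\Sigma_F(2s)$ yields exactly the formula in Theorem \ref{Thisismain2}.

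The only delicate point, which is more a matter of bookkeeping than a genuine obstacle, lies in the explicit constant produced by the unramified residue $r(f)_v$. Concretely, $B_{\H}\backslash P_{\H}\simeq B_{\GL_2}\backslash \GL_2$, so $r(f)_v$ is computed as the residue at $z=1$ of the spherical section in a degenerate principal series of $\GL_2(F_v)$; this residue is a well-known constant independent of $s$, and its reciprocal can be harmlessly absorbed into the archimedean component of $\varphi'$ (or, alternatively, into the choice of $\varphi'$ at a finite place in $\Sigma$) using the flexibility afforded by Lemma \ref{LemmaonRamified}(2)--(3), exactly as in the proof of Proposition \ref{TowardsMain2}. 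With that single rescaling, the two displayed identities combine to give the statement.
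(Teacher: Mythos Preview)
Your proposal is correct and follows essentially the same approach as the paper: the paper's own proof is the single sentence ``By Proposition \ref{TowardsMain2}, after normalizing the section $r(f)$, we get the following,'' and you have simply unpacked what ``normalizing the section $r(f)$'' means, namely multiplying through by $\zeta^\Sigma_F(s+1)\zeta^\Sigma_F(2s)$ and absorbing the residual constant from the unramified $\GL_2$ residue into $\varphi'$. Your handling of the bookkeeping (the spherical residue constant and its absorption via Lemma \ref{LemmaonRamified}) is more explicit than the paper's treatment but not different in substance.
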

    
\subsection{Properties of the Rankin--Selberg integrals}\label{sec:PropRSI}

In the following, we show that the integral $J^*(\varphi,s)$ is holomorphic for all $s$.

\subsubsection{Vanishing of a zeta integral}

In view of Proposition \ref{Ikeda} and Proposition \ref{residuesofBES}, we are interested in calculating the integral
\begin{equation}\label{eq6} J'(\varphi,f',s):=\int_{\H(F)Z_{\H}(\A)\setminus \H(\A)} {\rm Eis}_{Q_\H}(h,s,f') \varphi(h) d h,\end{equation}
where $\varphi$ is a cusp form in $\Pi$.

\begin{proposition}\label{vanishingofthemaledetto}
The integral $J'(\varphi,f',s)$ is identically zero.
\end{proposition}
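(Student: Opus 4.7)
The plan is to unfold $J'(\varphi,f',s)$ in essentially the same fashion as the Borel Eisenstein series was handled in the proof of Proposition \ref{unfolding}, and then to exploit a feature specific to the Klingen setup that was not available in the Borel case. I first unfold ${\rm Eis}_{Q_\H}$ against $\varphi$ along the Klingen parabolic and collapse the integration along $N_{Q_\H}(\A)$, obtaining
\[J'(\varphi,f',s) = \int_{M_{Q_\H}(F) N_{Q_\H}(\A) Z_\H(\A) \backslash \H(\A)} f'(h,s)\, \varphi_{N_{Q_\H}}(h)\, dh.\]
I then Fourier expand $\varphi_{N_{Q_\H}}$ along $N_{Q_\H}\backslash N_{Q_\G}$ and run the orbit analysis already carried out in the proof of Proposition \ref{unfolding}: the trivial $M_{Q_\H}(F)$-orbit contributes the constant term of $\varphi$ along $N_{Q_\G}$, which vanishes by cuspidality, while the open orbit is represented by $\chi_{\alpha,0}$ and has stabiliser $M_\alpha(F) = L_\alpha(F) N_\alpha(F)$, leaving
\[J'(\varphi,f',s) = \int_{M_\alpha(F) N_{Q_\H}(\A) Z_\H(\A) \backslash \H(\A)} f'(h,s)\, \varphi_{\chi_{\alpha,0}}(h)\, dh.\]

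The decisive observation is that $N_\alpha$ is contained in the Levi $M_{Q_\H}$ on which the modulus character $\delta_{Q_\H}$ is trivial, so $f'(n(b)h,s) = f'(h,s)$ for every $n(b) \in N_\alpha(\A)$. This is where the Klingen situation parts ways with the Borel setting of Proposition \ref{unfolding}, where the intervening Weyl element $w$ destroyed the analogous invariance and produced a Whittaker integral. Using this $N_\alpha(\A)$-invariance, I enlarge the denominator by absorbing $N_\alpha(F)\backslash N_\alpha(\A) \simeq F\backslash \A$ and write
\[J'(\varphi,f',s) = \int_{L_\alpha(F) N_\alpha(\A) N_{Q_\H}(\A) Z_\H(\A) \backslash \H(\A)} f'(h,s)\left(\int_{[N_\alpha]} \varphi_{\chi_{\alpha,0}}(n(b) h)\, db\right) dh.\]
Since $U_\G = N_{Q_\G} N_\alpha$ (as recalled in the proof of Proposition \ref{unfolding}), the inner bracket rewrites as $\int_{[U_\G]} \varphi(uh)\,\chi_0^{-1}(u)\, du$, where $\chi_0$ is the character of $U_\G$ obtained by extending $\chi_{\alpha,0}$ trivially on $N_\alpha$ (the fact that this defines a character uses the invariance of $\chi_{\alpha,0}$ under $N_\alpha$-conjugation already verified in the orbit analysis of Proposition \ref{unfolding}).

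To conclude, I would show this inner integral vanishes by cuspidality of $\varphi$ along the Siegel parabolic $P_\G$ of $\G$. The character $\chi_{\alpha,0}$ is supported on the $(1,2)$-entry of $N_{Q_\G}$, which is zero on the Siegel unipotent radical $N_{P_\G}$; combined with the triviality of $\chi_0$ on $N_\alpha$ and with the decomposition $N_{P_\G} = (N_{P_\G} \cap N_{Q_\G}) \cdot N_\alpha$, this shows $\chi_0|_{N_{P_\G}} = 1$. Factoring $U_\G = (U_\G \cap M_{P_\G}) \cdot N_{P_\G}$, where $M_{P_\G}$ denotes the Siegel Levi of $\G$, and performing the standard change of variables on $N_{P_\G}$ by elements of the Levi, the inner integral reduces to an integral of $\chi_0^{-1}(m)\, \varphi_{N_{P_\G}}(mh)$ over $[U_\G \cap M_{P_\G}]$, which is zero since $\varphi$ is cuspidal. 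The main bookkeeping will be verifying that $\chi_0$ is a well-defined character of $U_\G$ and checking the precise shape of $N_{P_\G}$ relative to the Klingen decomposition $U_\G = N_{Q_\G}N_\alpha$; the rest is a routine Fourier-theoretic manipulation.
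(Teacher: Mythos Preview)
Your proposal is correct and follows essentially the same route as the paper: unfold the Klingen Eisenstein series, Fourier expand along $[N_{Q_\G}/N_{Q_\H}]$, reduce to the open orbit using cuspidality along $N_{Q_\G}$, use the $N_\alpha(\A)$-invariance of $f'$ (coming from $N_\alpha \subset M_{Q_\H}$ with $\delta_{Q_\H}|_{N_\alpha}=1$) to collapse over $[N_\alpha]$, and then observe that the resulting inner integral is a degenerate Fourier coefficient along $U_\G$ that contains the constant term of $\varphi$ along $N_{P_\G}$, which vanishes by cuspidality. Your write-up of the last step (checking $\chi_0|_{N_{P_\G}}=1$ via the factorisation $N_{P_\G}=(N_{P_\G}\cap N_{Q_\G})\cdot N_\alpha$) is slightly more explicit than the paper's, but the argument is the same.
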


\begin{remark}
Notice that if the induced datum of the Klingen Eisenstein series is not degenerate,  the corresponding integral calculates the $\wedge^2 \times {\rm Std}$ $L$-function on $\GU_{2,2} \times \GL_2$, as we show in forthcoming work.
\end{remark}

\begin{proof}[Proof of Proposition \ref{vanishingofthemaledetto}]
We start by unfolding the Eisenstein series. For big enough ${\rm Re}(s)$, we get
\[\int_{{Q_\H}(F)Z_\H(\A)\setminus \H(\A)} f'(h,s)) \varphi(h) d h .\]
Collapse the sum over the unipotent radical $N_{Q_\H}$ of ${Q_\H}$ to get
\begin{align*}
 \int_{M_{Q_\H}(F)N_{Q_\H}(\A)Z_\H(\A)\setminus \H(\A)} f'(h,s) \int_{N_{Q_\H}(F) \backslash N_{Q_\H}(\A)}  \varphi(nh) dn d h.
\end{align*}
We now Fourier expand over $[ N_{Q_\G}/N_{Q_\H} ]$. The integral equals 
\[ \int_{M_{Q_\H}(F)N_{Q_\H}(\A)Z_\H(\A)\setminus \H(\A)} f'(h,s) \sum_{\chi}\varphi_\chi(h) d h,\]
where the sum runs over the characters $ \chi : [ N_{Q_\G}/N_{Q_\H} ] \to \C^\times$. As shown in the proof of Proposition \ref{unfolding}, any character $\chi: [ N_{Q_\G}/N_{Q_\H} ] \to \C^\times$ is of the form $\chi_{\alpha,\beta}:n \mapsto  \psi( {\rm Tr}_{E/F} (\alpha x + \beta y))$, with $\alpha, \beta \in E$ such that $\bar{\alpha} = - \alpha$ and  $\bar{\beta} = - \beta$. Again from the proof of Proposition \ref{unfolding}, we know that $M_{Q_\H}(F)$ acts on the space of characters of $[N_{Q_\G} / N_{Q_\H}]$ with two orbits, the trivial one and an open one, with a representative given by the character $\chi_{\alpha,0}$ with $\bar{\alpha} = - \alpha \ne 0$. By cuspidality of $\varphi$ along $N_{Q_\G}$, we have
\begin{align}\label{eq:Fexpansionandaction1} J'(\varphi,f',s) = \int_{M_\alpha(F)N_{Q_\H}(\A)Z_\H(\A)\setminus \H(\A)} f'(h,s)\varphi_{\chi_{\alpha,0}}(h) d h,\end{align} 
where $M_\alpha(F)= \left \{ \left(\begin{smallmatrix} a &   &   &  \\ & a &  b &   \\ & & d &   \\ & & & d\end{smallmatrix} \right)\; \right \} = L_\alpha(F) N_\alpha(F) $  denotes the stabiliser in $M_{Q_\H}(F)$ of $\chi_{\alpha,0}$, with Levi part (resp. unipotent part) $L_\alpha$ (resp. $N_\alpha$). Collapse the sum over $[N_\alpha]$ to get 
\begin{align*}
     J'(\varphi,f',s)&= \int_{L_\alpha(F)N_\alpha(\A) N_{Q_\H}(\A)Z_\H\setminus \H(\A)} f'(h,s) \int_{[N_\alpha]} \varphi_{\chi_{\alpha,0}}(n h) d n d h.
\end{align*}
Notice that the inner integral $\int_{[N_\alpha]} \varphi_{\chi_{\alpha,0}}(n h) d n$ equals the Fourier coefficient of $\varphi$ with respect to the unipotent radical of the Borel subgroup of $\G$ and the choice of a character on it which is degenerate (as it's only supported on the entry (1,2)). As a consequence,  $\int_{[N_\alpha]} \varphi_{\chi_{\alpha,0}}(n h) d n$ contains, as an inner integral, the period of $\varphi$ over the unipotent radical of the Siegel parabolic of $\G$, which vanishes by cuspidality of $\varphi$.
This completes the proof.
\end{proof} 

\begin{remark}
Proposition \ref{vanishingofthemaledetto} has potential applications to the study of the archimedean regulator of the cohomology classes introduced in \cite{CauchiGU22}. Indeed, we expect the vanishing of this integral to imply that the image via the archimedean regulator of the motivic incarnation of the classes of \cite[Definition 6.6]{CauchiGU22} contributes entirely to Eisenstein cohomology. 
\end{remark}

\subsubsection{Periods over $\GSp_4$}

For any cusp form $\varphi$ on $\G(\A)$, we consider \[  \varphi^\H(g) := \int_{\H(F)Z_\H(\A)\setminus \H(\A)}   \varphi(hg) d h.\] 

\begin{lemma}\label{lemmaontheperiod}
Let $\Pi$ be a globally generic cuspidal automorphic representation of $\mathbf{P}\G(\A)$. For any cusp form $\varphi$ in $\Pi$ and $g \in \G(\A)$, the period $\varphi^\H(g)=0$. 
\end{lemma}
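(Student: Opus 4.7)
\emph{Plan.} My plan is to reduce $\varphi^\H(g)$ to a Fourier coefficient of $\varphi$ along $[U_\G]$ against a degenerate character, which then vanishes by cuspidality of $\varphi$ along the Siegel unipotent $N_{P_\G}$. The approach mirrors the Fourier-expansion arguments used in the proofs of Propositions~\ref{unfolding} and \ref{vanishingofthemaledetto}.

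Using the Iwasawa decomposition $\H(\A) = K_\H \cdot Q_\H(\A)$ and the Levi decomposition $Q_\H = M_{Q_\H} N_{Q_\H}$, one writes
\[
\varphi^\H(g) = \int_{K_\H} \int_{M_{Q_\H}(F) Z_\H(\A) \backslash M_{Q_\H}(\A)} \varphi_{N_{Q_\H}}(mkg)\,\delta_{Q_\H}^{-1}(m)\, dm\, dk,
\]
where $\varphi_{N_{Q_\H}}(y) := \int_{[N_{Q_\H}]} \varphi(ny)\, dn$. One then Fourier expands $\varphi_{N_{Q_\H}}$ along $[N_{Q_\G}/N_{Q_\H}]$. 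As in the proof of Proposition~\ref{unfolding}, $M_{Q_\H}(F)$ acts on these characters with exactly two orbits: the trivial one, whose contribution equals $\varphi_{N_{Q_\G}}$ and is killed by cuspidality, and an open orbit represented by $\chi_{\alpha,0}$ with stabilizer $M_\alpha(F)=L_\alpha(F)N_\alpha(F)$. Collapsing the sum first over $M_\alpha(F)\backslash M_{Q_\H}(F)$ and then over $N_\alpha(F)\subseteq M_\alpha(F)$, the period becomes
\[
\varphi^\H(g) = \int_{K_\H} \int_{L_\alpha(F) N_\alpha(\A) Z_\H(\A) \backslash M_{Q_\H}(\A)} \int_{[N_\alpha]} \varphi_{\chi_{\alpha,0}}(nmkg)\, dn\, \delta_{Q_\H}^{-1}(m)\, dm\, dk.
\]

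The innermost double integral can be identified with a Fourier coefficient of $\varphi$ on $[U_\G]$. Since $U_\G = N_{Q_\G} \rtimes N_\alpha$ with $N_{Q_\G}$ normal in $Q_\G\supseteq U_\G$ and $N_\alpha = U_\G\cap M_{Q_\G}$, and since $N_\alpha\subseteq M_\alpha$ stabilizes $\chi_{\alpha,0}$ under conjugation, the character $\chi_{\alpha,0}$ extends trivially along $N_\alpha$ to a well-defined character $\tilde{\chi}$ of $U_\G$, so that
\[
\int_{[N_\alpha]} \varphi_{\chi_{\alpha, 0}}(nmkg)\, dn \;=\; \int_{[U_\G]} \varphi(umkg)\, \tilde{\chi}^{-1}(u)\, du.
\]
The key point is now that $\tilde{\chi}$ is non-trivial only on the ``long root'' direction of $U_\G$, i.e.\ on the $(1,2)$-entry, whereas the Siegel unipotent $N_{P_\G}\subseteq U_\G$ is precisely the subgroup with trivial $(1,2)$-entry; hence $\tilde{\chi}|_{N_{P_\G}}=1$. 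Factoring $U_\G = N_{P_\G}\rtimes N'$ (with $N'$ the two-dimensional subgroup parametrizing the $(1,2)$-entry and $N_{P_\G}$ normal in $U_\G$ as the unipotent radical of $P_\G\supseteq B_\G$), we obtain
\[
\int_{[U_\G]} \varphi(umkg)\,\tilde{\chi}^{-1}(u)\, du \;=\; \int_{[N']} \tilde{\chi}^{-1}(u_2)\, \varphi_{N_{P_\G}}(u_2 mkg)\, du_2 \;=\; 0
\]
by cuspidality of $\varphi$ along $N_{P_\G}$.

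The main (routine) verifications concern the two semidirect product structures $U_\G = N_{Q_\G}\rtimes N_\alpha = N_{P_\G}\rtimes N'$ and the well-definedness of the character extension $\tilde{\chi}$; both follow from direct matrix computations in $\G=\GU_{2,2}$. It is worth noting that the argument uses only cuspidality of $\varphi$, so the global generic hypothesis on $\Pi$ is not actually used in the proof---it is simply the setting in which the lemma will be applied.
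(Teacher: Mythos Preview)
Your argument has a genuine gap at the very first step. The displayed identity
\[
\varphi^\H(g) = \int_{K_\H} \int_{M_{Q_\H}(F) Z_\H(\A) \backslash M_{Q_\H}(\A)} \varphi_{N_{Q_\H}}(mkg)\,\delta_{Q_\H}^{-1}(m)\, dm\, dk
\]
is not valid. The Iwasawa decomposition $\H(\A)=N_{Q_\H}(\A)M_{Q_\H}(\A)K_\H$ yields such a formula only for an integral over $Q_\H(F)Z_\H(\A)\backslash \H(\A)$, not over $\H(F)Z_\H(\A)\backslash \H(\A)$; the two quotients differ by the infinite coset space $Q_\H(F)\backslash \H(F)$, and there is no elementary identity collapsing one to the other. (Already for the constant function the right-hand side diverges, while ${\rm vol}(\H(F)Z_\H(\A)\backslash \H(\A))$ is finite.) In Propositions~\ref{unfolding} and~\ref{vanishingofthemaledetto} the passage to the smaller quotient is legitimate precisely because one first unfolds an Eisenstein series, which supplies the missing sum over $Q_\H(F)\backslash \H(F)$ together with a section whose decay makes the resulting integral converge; here there is no Eisenstein series to unfold. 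Everything you write after this step---in particular, the identification of $\int_{[N_\alpha]}\varphi_{\chi_{\alpha,0}}$ with a degenerate Fourier coefficient along $[U_\G]$ and its vanishing via cuspidality along $N_{P_\G}$---is correct and is exactly the mechanism behind Proposition~\ref{vanishingofthemaledetto}; but the reduction of $\varphi^\H(g)$ to that coefficient is unjustified.

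The paper's proof is completely different and is local in nature: one reduces to the $\Sp_4$-period, chooses a split unramified place $v$, identifies $\Pi_v$ with a generic unramified representation of $\GL_4(F_v)$, and then invokes the theorem of Heumos--Rallis that a generic irreducible representation of $\GL_{2n}$ admits no non-zero $\Sp_{2n}$-invariant functional. In particular the paper \emph{does} use the generic hypothesis, contrary to your closing remark; a purely cuspidality-based argument would be a strictly stronger statement, and your attempt does not establish it.
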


\begin{proof}
Since $\Sp_4$ is a normal subgroup of $\H$, the result is implied by showing that, for any $\varphi$ in $\Pi$ and $g \in \G(\A)$, \[  \varphi^{\Sp_4}(g) := \int_{[\Sp_4]}  \varphi(hg) d h=0.\]
We show the latter by contradiction. Suppose that there exists $\varphi$ in $\Pi$ and $g \in \G(\A)$ for which $\varphi^{\Sp_4}(g) \ne 0$. This defines a non-trivial $\Sp_4$-functional on $\Pi$.  Let $v$ be an unramified place for $\Pi$ which splits in $E$. We can further assume that $g_v=1$. Since $v$ splits, we can identify $\Pi_v$ with an unramified generic representation $\pi_v$ of $\GL_4(F_v)$ with trivial central character. As the linear span $R_v$ of the $\textbf{P}\G(F_v)$-right translates of $\varphi$ is isomorphic to $\pi_v$, the map \[ R_v \ni \varphi' \mapsto \left( g'  \mapsto \int_{[\Sp_4]}  \varphi'(hgg' ) d h \right) \] defines an injective intertwining map in ${\rm Hom}_{\GL_4(F_v)}(\pi_v, {C}_c^\infty(\Sp_4(F_v) \backslash \GL_4(F_v)))$. This contradicts the genericity of $\pi_v$ (\textit{cf}. \cite[Theorem 3.2.2]{HeumosRallis}). The Lemma follows. 
\end{proof}

\subsubsection{Location of poles}

\begin{theorem}\label{polesofintegral}
Let $\Pi$ be a globally generic cuspidal automorphic representation of $\G(\A)$, with trivial central character and let $\varphi$ be a cusp form in $\Pi$. The integral $J^*(\varphi,s)$ is holomorphic for all $s$. In particular, if $\Pi$ is the small theta lift of a $\H(\A)^+$-factor of a globally generic cuspidal automorphic representation $\sigma$ of $\H(\A)$, then $L^\Sigma(s,\sigma, {\rm std} \otimes \chi_{E/F})$ is holomorphic for all $s$, with $\Sigma$ a finite set of places containing all the ramified places of $\Pi$, $\sigma$, and $E/F$. 
\end{theorem}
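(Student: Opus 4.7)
The plan is to locate the possible poles of $J^*(\varphi,s)$ using the information on the poles of the normalized Siegel Eisenstein series $E^*_{P_\H}(g,s)$ recalled in \S\ref{SiegelEisSeries}, and to show that at each candidate pole the residue of $J^*(\varphi,s)$ vanishes identically. Recall that $E^*_{P_\H}(g,s)$ has at most simple poles at $s=1$ and $s=2$. Since the integral $J^*(\varphi,s)$ converges absolutely away from these points (the restriction of $\varphi$ to $\H(\A)$ being rapidly decreasing) and defines a meromorphic function of $s$, it is enough to check that the residues at $s=1$ and $s=2$ are zero.

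At $s=2$, the residue of $E^*_{P_\H}(g,s)$ is a constant function of $g$. Hence, up to a non-zero scalar,
\[
\mathrm{Res}_{s=2} J^*(\varphi,s) \;=\; c \cdot \int_{\H(F)Z_\H(\A)\setminus \H(\A)} \varphi(h)\,dh \;=\; c\cdot \varphi^\H(1).
\]
Since $\Pi$ is globally generic, Lemma \ref{lemmaontheperiod} gives $\varphi^\H(1)=0$, so the residue at $s=2$ vanishes. At $s=1$, Proposition \ref{Ikeda} identifies the residue of $E^*_{P_\H}(g,s)$ with a non-zero multiple of the Klingen Eisenstein series $\mathrm{Eis}_{Q_\H}(g,1/2,f'_{1/2})$. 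Substituting into the definition of $J^*(\varphi,s)$, we obtain
\[
\mathrm{Res}_{s=1} J^*(\varphi,s) \;=\; c'\cdot J'(\varphi,f'_{1/2},1/2),
\]
and Proposition \ref{vanishingofthemaledetto} asserts that $J'(\varphi,f',s)\equiv 0$ for all sections $f'$ and all $s$. Thus the residue at $s=1$ also vanishes, and $J^*(\varphi,s)$ is entire.

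To deduce the statement on $L^\Sigma(s,\sigma,\mathrm{std}\otimes\chi_{E/F})$ when $\Pi$ arises as the small theta lift in the sense of Theorem \ref{Morimotomain}, fix an arbitrary $s_0\in\C$. By Theorem \ref{Thisismain2}, after a suitable choice of cusp form $\varphi'$ and of section $f$ we have
\[
J^*(\varphi,s) \;=\; L^\Sigma(s,\sigma,\mathrm{std}\otimes\chi_{E/F})\cdot \prod_{v\in\Sigma_\infty} I(\varphi'_v,f_v,s,1).
\]
By Lemma \ref{LemmaonRamified}(3), the archimedean data $(\varphi'_v,f_v)$ may be chosen so that each factor $I(\varphi'_v,f_v,s,1)$ is non-zero at $s=s_0$, while the argument of the previous paragraph shows that the left-hand side is holomorphic at $s_0$. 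Dividing by the non-vanishing archimedean factors in a neighborhood of $s_0$ shows that $L^\Sigma(s,\sigma,\mathrm{std}\otimes\chi_{E/F})$ is holomorphic at $s_0$. Since $s_0$ was arbitrary, the partial $L$-function is entire.

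The main obstacle is the second residue computation, namely identifying $\mathrm{Res}_{s=1} J^*(\varphi,s)$ with the zeta integral $J'(\varphi,f'_{1/2},1/2)$ against a Klingen Eisenstein series; this requires the Ikeda-type first-term identity of Proposition \ref{Ikeda} and the cuspidality-based vanishing of $J'(\varphi,f',s)$ via Fourier expansion along $N_{Q_\G}/N_{Q_\H}$, as in Proposition \ref{vanishingofthemaledetto}. All other steps are immediate consequences of results already established in the excerpt.
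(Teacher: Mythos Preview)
Your proof is correct and follows essentially the same approach as the paper: you use the fact that $E^*_{P_\H}$ has at most simple poles at $s=1,2$, then dispose of $s=2$ via Lemma~\ref{lemmaontheperiod} and of $s=1$ via Proposition~\ref{Ikeda} combined with Proposition~\ref{vanishingofthemaledetto}, exactly as in the paper. Your deduction of the holomorphy of $L^\Sigma(s,\sigma,\mathrm{std}\otimes\chi_{E/F})$ is slightly more detailed than the paper's (which simply invokes Theorem~\ref{Thisismain2}); your extra appeal to Lemma~\ref{LemmaonRamified}(3) to guarantee non-vanishing of the archimedean factor at an arbitrary $s_0$ makes the argument more explicit but is in the same spirit.
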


\begin{proof}
Recall that the normalized Eisenstein series $E_{P_\H}^*(g,s)$ achieves poles at $s=1,2$ of order at most one, thus $J^*(\varphi ,s)$ may have a pole at these two values. 

At $s=1$, the residue of the integral equals to 
\begin{align*}
{\rm Res}_{s=1} J^*(\varphi,s) &=    {\rm Res}_{s=1} \int_{\H(F)Z_\H(\A)\setminus \H(\A)} E^*_{P_\H}(h,s) \varphi(h) d h \\
&= \int_{\H(F)Z_\H(\A)\setminus \H(\A)} {\rm Res}_{s=1}E^*_{P_\H}(h,s) \varphi(h) d h \\
&= C \cdot  \int_{\H(F)Z_\H(\A)\setminus \H(\A)}   {\rm Eis}_{Q_\H}( g , 1/2,f_{1/2}') \varphi(h) d h \\
&= C \cdot  J'(\varphi, f_{1/2}',1/2),
\end{align*}
where the third equality follows from Proposition \ref{Ikeda} and $C=\frac{{\rm Res}_{s=1}(\zeta^\Sigma_F(s))\zeta^\Sigma_F(3)}{2 \zeta^\Sigma_F(2)}$. The latter integral vanishes by Proposition \ref{vanishingofthemaledetto}, hence $J^*(\varphi,s)$ is holomorphic at $s=1$.

At $s=2$ there exists $f \in I_{P_\H}(s)$ such that ${\rm Res}_{s=2} E_{P_\H}^*(g,s)$ is constant. Therefore 
\begin{align*}
{\rm Res}_{s=2} J^*(\varphi,s) &= \int_{\H(F)Z_\H(\A)\setminus \H(\A)} {\rm Res}_{s=2}E^*_{P_\H}(h,s) \varphi(h) d h \\
&= C \cdot  \int_{\H(F)Z_\H(\A)\setminus \H(\A)}   \varphi(h) d h .
\end{align*}
Hence, $J^*(\varphi,s)$ achieves a pole at $s=2$ if $\varphi^{\H}(1) \ne 0$. However, the latter is always zero because of  Lemma \ref{lemmaontheperiod}. This proves the first statement. The second one follows from this and from Theorem \ref{Thisismain2}.
\end{proof}

\begin{remark}
    Since the theta lift of $\sigma$ to ${\rm PGSO}_{4,2} \simeq \mathbf{PG}$ is non-zero and cuspidal by hypothesis, Rallis tower property implies that the theta lift of $\sigma$ to ${\rm GSO}_{3,1}$ and ${\rm GSO}_{2,0}$ is zero. Thus, by using \cite[Theorem 7.2.5]{KudlaRallis}, one could already deduce that the partial $L$-function $L^\Sigma(s,\sigma, {\rm std} \otimes \chi_{E/F})$ is entire. 
\end{remark}

\bibliographystyle{acm}

\bibliography{MultivariableGU22}

\begin{thebibliography}{10}

\bibitem{BlasiusRogawski}
{\sc Blasius, D., and Rogawski, J.~D.}
\newblock Zeta functions of {S}himura varieties.
\newblock {\em Motives (Seattle, WA, 1991) 55}, Part 2 (1994), 525--571.

\bibitem{BFGsplitorthogonal}
{\sc Bump, D., Friedberg, S., and Ginzburg, D.}
\newblock Whittaker-orthogonal models, functoriality, and the
  {R}ankin-{S}elberg method.
\newblock {\em Inventiones mathematicae 109}, 1 (1992), 55--96.

\bibitem{BFG}
{\sc Bump, D., Friedberg, S., and Ginzburg, D.}
\newblock Rankin-{S}elberg integrals in two complex variables.
\newblock {\em Mathematische Annalen 313\/} (1999), 731--761.

\bibitem{CauchiGU22}
{\sc Cauchi, A.}
\newblock Norm-compatible systems of cohomology classes for {${\rm GU }(2,
  2)$}.
\newblock {\em International Journal of Number Theory 16}, 03 (2020), 461--510.

\bibitem{File13}
{\sc File, D.}
\newblock On the degree five {$L$}-function for {$\mathrm{GSp}_4$}.
\newblock {\em Transactions of the American Mathematical Society 365}, 12
  (2013), 6471--6497.

\bibitem{FultonHarris}
{\sc Fulton, W., and Harris, J.}
\newblock {\em Representation theory: a first course}, vol.~129.
\newblock Springer Science \& Business Media, 2013.

\bibitem{FurusawaMorimoto}
{\sc Furusawa, M., and Morimoto, K.}
\newblock Shalika periods on {${\rm GU}(2,2)$}.
\newblock {\em Proceedings of the American Mathematical Society 141}, 12
  (2013), 4125--4137.

\bibitem{GanHundley}
{\sc Gan, W.~T., and Hundley, J.}
\newblock The spin {$L$}-function of quasi-split {$D_4$}.
\newblock {\em Int. Math. Res. Pap.\/} (2006).

\bibitem{Gan-SavinExceptionalSW}
{\sc Gan, W.~T., and Savin, G.}
\newblock An exceptional {S}iegel--{W}eil formula and poles of the {S}pin
  {$L$}-function of {$\mathrm{PGSp}_6$}.
\newblock {\em Compositio Mathematica 156}, 6 (2020), 1231--1261.

\bibitem{Harris-Kudla}
{\sc Harris, M., and Kudla, S.~S.}
\newblock On a conjecture of {Jacquet}.
\newblock In {\em Contributions to automorphic forms, geometry, and number
  theory}. Baltimore, MD: Johns Hopkins University Press, 2004, pp.~355--371.

\bibitem{HeumosRallis}
{\sc Heumos, M., and Rallis, S.}
\newblock Symplectic-{W}hittaker models for {$\GL_n$}.
\newblock {\em Pacific Journal of Mathematics 146}, 2 (1990), 247--279.

\bibitem{Ikeda}
{\sc Ikeda, T.}
\newblock On the location of poles of the triple {$L$}-functions.
\newblock {\em Compositio Mathematica 83}, 2 (1992), 187--237.

\bibitem{KimExterior}
{\sc Kim, H.~H., and Krishnamurthy, M.}
\newblock Twisted exterior square lift from {{\(\text{GU}(2,2)_{E/F}\)}} to
  {{\(\text{GL}_6/F\)}}.
\newblock {\em J. Ramanujan Math. Soc. 23}, 4 (2008), 381--412.

\bibitem{KudlaRallis}
{\sc Kudla, S.~S., and Rallis, S.}
\newblock A regularized {Siegel-Weil Formula: The First Term Identity}.
\newblock {\em Annals of Mathematics 140}, 1 (1994), 1--80.

\bibitem{macdonald}
{\sc Macdonald, I.~G.}
\newblock {\em Symmetric functions and {H}all polynomials}.
\newblock Oxford university press, 1998.

\bibitem{morimoto}
{\sc Morimoto, K.}
\newblock On the theta correspondence for { $( \GSp(4), \GSO(4, 2))$} and
  {S}halika periods.
\newblock {\em Representation Theory of the American Mathematical Society 18},
  3 (2014), 28--87.

\bibitem{PollackUGJ}
{\sc Pollack, A.}
\newblock Unramified {G}odement-{J}acquet theory for the spin similitude group.
\newblock {\em J. Ramanujan Math. Soc. 33}, 3 (2018), 249---282.

\bibitem{PollackShahMVGU}
{\sc Pollack, A., and Shah, S.}
\newblock Multivariate {R}ankin--{S}elberg {I}ntegrals on {$\GL_4$} and
  {$\mathrm{GU}(2,2)$}.
\newblock {\em Canadian Mathematical Bulletin 61}, 4 (2018), 822--835.

\bibitem{Sugano}
{\sc Sugano, T.}
\newblock On {D}irichlet series attached to holomorphic cusp forms on {${\rm
  SO} (2, q)$}.
\newblock {\em Adv. Stud. Pure Math 7\/} (1985), 333--362.

\end{thebibliography}

\end{document}